\newcommand*\patchAmsMathEnvironmentForLineno[1]{
  \expandafter\let\csname old#1\expandafter\endcsname\csname #1\endcsname
  \expandafter\let\csname oldend#1\expandafter\endcsname\csname end#1\endcsname
  \renewenvironment{#1}
     {\linenomath\csname old#1\endcsname}
     {\csname oldend#1\endcsname\endlinenomath}}
\newcommand*\patchBothAmsMathEnvironmentsForLineno[1]{
  \patchAmsMathEnvironmentForLineno{#1}
  \patchAmsMathEnvironmentForLineno{#1*}}
\tikzset{
  arrows along my path/.style={
    postaction={
      decorate,
      decoration={
        markings,
        mark=between positions 0.03 and 1 step 10pt with {\arrow{Stealth[length=5pt]}},
   }}}}
\providecommand{\U}[1]{\protect\rule{.1in}{.1in}}
\theoremstyle{plain}
\newtheorem{thm}{Theorem}[section]
\newtheorem{cor}[thm]{Corollary}
\newtheorem{lem}[thm]{Lemma}
\newtheorem{prop}[thm]{Proposition}
\newtheorem{problem}[thm]{Problem}
\theoremstyle{definition}
\newtheorem{defn}[thm]{Definition}
\newtheorem{ex}[thm]{Example}
\theoremstyle{remark}
\newtheorem{rem}[thm]{Remark}
\newcommand{\C}{\mathbb{C}}
\newcommand{\R}{\mathbb{R}}
\newcommand{\N}{\mathbb{N}}
\newcommand{\calB}{\mathcal{B}}
\newcommand{\rmd}{\mathrm{d}}
\renewcommand{\Re}{{\sf Re}}
\renewcommand{\Im}{{\sf Im}}
\renewcommand{\leq}{\leqslant}
\renewcommand{\P}{\mathcal{P}(\mathbb{R})}
\begin{document}

\title{On freely quasi-infinitely divisible distributions}
\author{Ikkei Hotta\thanks{Department of Applied Science, Yamaguchi University 2-16-1 Tokiwadai, Ube 755-8611, Japan} \\
Wojciech M\l otkowski\thanks{Instytut Matematyczny, Uniwersytet Wroc{\l}awski, Plac Grunwaldzki 2/4, 50-384 Wroc{\l}aw, Poland} \\
 Noriyoshi Sakuma\thanks{Graduate School of Natural Sciences,
Nagoya City University,
Mizuho-ku, Nagoya, Aichi 467-8501, Japan} \\
  Yuki Ueda\thanks{Department of Mathematics, Hokkaido University of Education, 9 Hokumon-cho, Asahikawa 070-8621, Japan}}
\date{\today}
\maketitle
\begin{abstract}
Inspired by the notion of quasi-infinite divisibility (QID), we introduce and study
the class of freely quasi-infinitely divisible (FQID) distributions on $\mathbb{R}$,
i.e. distributions which admit the free L\'{e}vy-Khintchine-type representation with
signed L\'{e}vy measure. We prove several properties of the FQID class, some of them in contrast to
those of the QID class. For example, a FQID distribution may have negative Gaussian part,
and the total mass of its signed L\'{e}vy measure may be negative.
Finally, we extend the Bercovici-Pata bijection, providing a characteristic triplet,
with the L\'{e}vy measure having nonzero negative part, which is at the same time
classical and free characteristic triplet.
\end{abstract}

\tableofcontents

\section{Introduction}

In classical and free probability theories, infinitely divisible distributions are defined by the classical convolution operation and connected with L{\'e}vy processes, which are stochastic processes with independent increments and time-homogenous stationary distributions. 
More precisely, if $\mu$ is an infinitely divisible distribution on $\R$, then one can construct a L{\'e}vy process $\{X_t\}_{t\ge 0}$ such that a marginal law of $X_1$ coincides with $\mu$. 
Conversely, if a stochastic process $\{X_t\}_{t\ge0}$ is a L{\'e}vy process, then its marginal distribution is infinitely divisible. 
Because of this relation, infinite divisibility plays a crucial role in research on the marginal laws of L\'{e}vy processes. Readers may consult \cite{Sato} for the classical case, and \cite{BNT02,BNT06,Bia98} for the free case.

Infinitely divisible distributions are characterized in terms of analytic tools: the characteristic function in the classical case \cite{Sato} and the $R$-transform or the Voiculescu transform in the free case \cite{BNT06,BeVo1993}. These representations are called L\'{e}vy-Khintchine representations. 
One of the most beautiful discoveries in this area is that the L\'{e}vy-Khintchine representation in both classical and free probability can be parametrized by $a\ge0$, $\gamma\in\R$, and a L{\'e}vy measure $\nu$.
The triplet $(a,\nu,\gamma)$ is called characteristic triplet and free characteristic triplets, respectively.
They give a bijection between the classes of classical and free infinitely divisible distributions (see \cite[Theorem 1.2]{BP}).
It is called the Bercovici-Pata bijection now.
The importance of this bijection comes from the study of the limit theorem in free probability. 
Indeed, research on free infinitely divisible distributions has made rapid progress now that the rich structure of the class of free infinitely divisible distributions has been revealed. 

Going back to the classical probability case, many researchers have faced similar interesting examples in which the distributions are not infinitely divisible, but have a L\'{e}vy-Khintchine-type representation.
Namely, in the L{\'e}vy-Khintchine representation, the L\'{e}vy measure is a signed measure.
A precise definition is given in Section 2.
Such a distribution is called a quasi-infinitely divisible distribution.
Although the class of quasi-infinitely divisible distributions has been considered for a long time since being investigated in \cite{LO77}, few systematic approaches for this class have been reported. 
However, there has been some progress in the past decade. In particular, Lindner, Pan, and Sato in \cite{LPS} gathered many examples and studied the distributional properties of quasi-infinitely divisible distributions from a real analytic view.
They investigate quasi-infinitely divisible distributions from the L\'{e}vy-Khintchine representation and the characteristic triplet.
For example, we can not allow the negative Gaussian part $a$. If $a<0$, then there does not exists a corresponding probability measure with the characteristic triplet.

The purpose of this paper is to investigate the freely quasi-infinitely divisible (FQID) distributions. The class of FQID distributions is a natural extension of the class of freely infinitely divisible distributions to explore correspondence free and classical probability. 
Section 2 presents some preliminary results that are used in the remainder of the paper. 
In Section 3, we define the FQID distributions and identify several properties (convolution, atoms, convergence, and support) of FQID distributions. 
In Section 4, we present some examples of FQID distributions that are not freely infinitely divisible. Moreover, we discover different points about classical and free quasi-infinitely divisible distributions through several examples. 
In Section 5, we focus on a question raised by Bo{\.z}ejko of whether the Bercovici-Pata bijection can be extended to a larger class than one of infinitely divisible distributions. In order to answer his question, we find examples which come from the L{\'e}vy measure with the Cauchy distribution and symmetric distribution. 

\section{Preliminaries}

\subsection{Infinitely divisible distributions}

Let $\P$ be the set of all (Borel) probability measures on $\R$. For $\mu\in\P$, 
its characteristic function is defined by 
\begin{align*}
\widehat{\mu}(z):=\int_{\R}e^{izx}\mu(\rmd x), \qquad z\in \R.
\end{align*} 
A probability measure $\mu\in\P$ is said to be {\it (classical) infinitely divisible} (ID) if, for any $n\in \N$, there exists some $\mu_n\in\P$ such that $\mu=\mu_n^{\ast n}$. The class of ID distributions, denoted by ${\rm ID}(\ast)$, comes from limit theorems and plays a crucial role in constructing stochastic processes in connection to the study of L{\'e}vy processes and its applications in mathematical statistics, finance, and physics.

If $\mu$ is infinitely divisible, then its characteristic function admits the following representation (namely, the {\it L\'{e}vy-Khintchine representation}):
\begin{align}\label{cLK}
\widehat{\mu}(z)=\exp\left(i\gamma z-\frac{a}{2}z^2+\int_\R (e^{i zx}-1-izx \mathbf{1}_{[-1,1]}(x))\nu(dx)\right), \qquad z\in \R,
\end{align}
where $\gamma\in\R$, $a\ge 0$, and $\nu$ is a L\'{e}vy measure\footnote{A (Borel-) measure $\nu$
  on $\R$ is called a L\'evy measure if $\nu(\{0\})=0$ and
  $\int_{\R}(1\land x^2)\,\nu(\rmd x)<\infty$.} on $\R$. 
  Conversely, if the characteristic function of $\mu\in\P$ is given by the L\'{e}vy-Khintchine representation, then $\mu$ is ID. 
Each $\mu\in\P$ determines a unique triplet $(a,\nu,\gamma)$, called a {\it characteristic triplet for $\mu$}; see \cite{Sato} for further information.

The characteristic function of an ID distribution has another representation.
Denote the class of all Borel sets in $\R$ as $\calB$.
Let $\mu$ be an ID distribution with a characteristic triplet $(a,\nu,\gamma)$. If we set $b:=\gamma$ and define a finite measure $\tau$ by
  \begin{align*}
  \tau(B):=a\delta_0(B)+\int_B(1\land x^2)\nu(dx), \qquad B\in\calB,
  \end{align*}
  then 
  \begin{align}\label{acLK}
  \widehat{\mu}(z)=\exp\left( ib z+\int_\R g_c(x,z)\tau(dx)\right), \qquad z\in\R,
  \end{align}
where the function $g_c$ is defined by
\begin{align*}
g_c(x,z):=\begin{cases}
(e^{izx}-1-izx\mathbf{1}_{[-1,1]}(x))/(1\land x^2), & x\neq 0,\\
-z^2/2, & x=0, 
\end{cases}
\qquad z\in \R.
\end{align*}
Observe that the kernel function $\R\rightarrow \R: x\mapsto g_c(x, z)$ in \eqref{acLK} is bounded for each $z\in \R$ and continuous at $0$. It is easy to see that $\mu\in\P$ determines a unique pair $(b,\tau)$, called the {\it characteristic pair for $\mu$}. Conversely, if a characteristic function of $\mu$ is given by \eqref{acLK} for some characteristic pair $(b,\tau)$, then it also has the L\'{e}vy-Khintchine representation \eqref{cLK} for some characteristic triplet $(a,\nu,\gamma)$.

\subsection{Quasi-infinitely divisible distributions}
Here, we summarize the work of Lindner, Pan, and Sato \cite{LPS} for quasi-infinitely divisible distributions. 

A function $\nu:\calB\to[-\infty,\infty]$ is called a {\it signed measure} on $\R$ if $\nu(\emptyset) =0$ and $\nu(\bigcup_{n=1}^{\infty} A_{n}) = \sum_{n=1}^{\infty}\nu(A_{n})$ for any pairwise disjoint sets $\{A_n\}_n$ in $\calB$. 
We define the {\it total variation} of a signed measure $\nu$ by
\begin{align*}
|\nu|(A) := \sup_{\{A_{j}\}: \text{partition of }A} \sum_{j} |\nu(A_{j})|.
\end{align*}
It is known that $|\nu|$ is a measure on $\R$.
A signed measure $\nu$ is said to be {\it finite} if $|\nu|$ is a finite measure. 
By the Hahn-Jordan decomposition theorem \cite[p.127]{Rudin}, for a finite signed measure $\nu$, there exist disjoint Borel sets $C^{+}, C^{-}$ in $\R$ and finite measures $\nu^{+},\nu^{-}$ on $\calB$ satisfying $\nu = \nu^{+}-\nu^{-}$ so that $\nu^{+}(\R\backslash C^{+}) = \nu^{-}(\R\backslash C^{-})=0$.
We call $\nu^{+}$ and $\nu^{-}$ the {\it positive part} and the {\it negative part} of the signed measure $\nu$, respectively. 
The measures $\nu^+$ and $\nu^-$ are uniquely determined by $\nu$.

We now define quasi-infinitely divisible distributions on $\R$.
\begin{defn}
A probability measure $\mu\in\P$ is called a {\it quasi-infinitely divisible (QID) distribution} if its characteristic function $\widehat{\mu}$ admits the representation \eqref{acLK} for some $b \in \R$ and a finite signed measure $\tau$ on $\R$. 
\end{defn}
A pair $(b,\tau)$ is uniquely determined by a QID distribution $\mu$ and is called a {\it characteristic pair} for a QID distribution $\mu$. 
$\mu$ is QID if and only if there exist $\mu_1,\mu_2\in I(\ast)$ such that $\mu\ast \mu_1=\mu_2$ by applying the Hahn-Jordan decomposition to a finite signed measure $\tau$ in \eqref{acLK}.

We can rephrase this in terms of the characteristic triplets. 
We seek a characteristic triplet for the QID distribution $\mu$ in terms of the L\'{e}vy-Khintchine representation in the sense of \eqref{cLK}. 
Here, we have to be a little careful.
First, we assume that there exist $\mu_1,\mu_2\in I(\ast)$ such that $\mu\ast \mu_1=\mu_2$ and $\nu_{1}$ and $\nu_{2}$ are the Levy measures of $\mu_{1}$ and $\mu_{2}$, respectively.
If $\nu_1$ and $\nu_2$ are infinite, the difference $\nu_1-\nu_2$ is no longer a signed measure. 
To define a signed L\'{e}vy measure (namely, a quasi-L\'{e}vy measure), we define the following families: $\calB_{r}:=\{ B \in\calB ; B \cap (-r,r)=\emptyset \}$ for $r>0$ and $\calB_{0} := \bigcup_{r>0}\calB_{r}$.
Note that $\calB_{0}$ is not a $\sigma$-algebra, so we must take care to define signed measures on $\R\backslash\{0\}$. 

A function $\nu:\calB_{0}\to[-\infty,\infty]$ is called a {\it quasi-L{\'e}vy type measure} if 
\begin{enumerate}[{\rm (i)}]
\item
$\nu|_{\calB_{r}}$ is a finite signed measure for each $r>0$;
\item
$\int_{\R} (1\wedge x^{2})|\nu|(dx)<\infty$.
\end{enumerate}

Condition (i) ensures that $|\nu|$, $\nu^{+}$, and $\nu^{-}$ are unique and well-defined measures on $(\R,\calB)$ satisfying
\begin{align*}
|\nu|(\{0\}) = \nu^{+}(\{0\}) = \nu^{-}(\{0\})=0,
\end{align*}
which, in particular, justifies the integral in condition (ii). Moreover,
\begin{align*}
&|\nu|(A) =\Big|\nu|_{\calB_{r}}\Big|(A) = \Big|\nu|_{\calB_{s}}\Big| (A)\quad\text{ for \, $A\in\calB_{r}$\, and \,$s<r$,}\\
&\nu^{+}(A) =\nu^{+}|_{\calB_{r}}(A) = \nu^{+}|_{\calB_{s}}(A) \text{ \quad for\, $A\in\calB_{r}$ \,and\, $s<r$},\\
&\nu^{-}(A) =\nu^{-}|_{\calB_{r}}(A) = \nu^{-}|_{\calB_{s}}(A) \text{\quad for\, $A\in\calB_{r}$ \,and\, $s<r$.}
\end{align*}

For a QID distribution $\mu$ with a characteristic pair $(b,\tau)$, we define a function $\nu:\mathcal{B}_0\rightarrow \R$ by
\begin{align*}
\nu(B):=\int_B (1\land x^2)^{-1} \tau(dx), \qquad B\in \calB_{0}.
\end{align*}
The measure $\nu$ is then a quasi-L\'{e}vy-type measure. Moreover, setting $\gamma:=b$ and $a:=\tau(\{0\})$, the characteristic function of the QID distribution $\mu$ admits the following representation: 
\begin{equation}
\label{q-LK-eq}
\widehat{\mu}(z)=\exp\left(-\frac{a}{2}z^2 + i\gamma z+\int_\R (e^{i zx}-1-izx \mathbf{1}_{[-1,1]}(x))\nu(dx)\right), \qquad z\in\R.
\end{equation}
Conversely, if $\mu\in\P$ has the representation given in \eqref{q-LK-eq} with $a, \gamma \in \R$ and a quasi-L\'{e}vy-type measure $\nu$ on $\R$, then we obtain the representation given by \eqref{acLK} for some characteristic pair $(b,\tau)$, and therefore $\mu$ is QID. The triplet $(a,\nu,\gamma)$ is called a {\it characteristic triplet} of the QID distribution $\mu$. 

In particular, a quasi-L\'{e}vy-type measure $\nu$ is called a {\it quasi-L\'{e}vy measure} for a QID distribution $\mu$ if the characteristic function of $\mu$ has the representation \eqref{q-LK-eq} for some characteristic triplet $(a,\nu,\gamma)$. For any QID distribution, $a\ge 0$. 
Not every quasi-L\'{e}vy-type measure is a quasi-L\'{e}vy measure (see, e.g., \cite[Example 2.9]{LPS}).

We say that a function $f:\R\rightarrow\R$ is {\it integrable with respect to a quasi-L\'{e}vy-type measure $\nu$} if it is integrable with respect to $|\nu|$ (and therefore with respect to $\nu^+$ and $\nu^-$ as well).
We then define
\begin{align*}
\int_B f(x)\nu(dx):=\int_B f(x)\nu^+(dx)-\int_Bf(x)\nu^-(dx),\qquad B\in \calB.
\end{align*}
Note that, for each $z\in\R$, the function $x\mapsto e^{izx}-1-iz \mathbf{1}_{[-1,1]}(x)$ is integrable with respect to $\nu$.

\subsection{Free probability theory}

In non-commutative probability theory, self-adjoint operators are interpreted as (real-valued) random variables. A remarkable feature is that various notions of independence exist for those random variables. In particular, free independence has found applications in operator algebras and random matrices, and has been intensively studied (see \cite{NiSpBook,MS17}).

\subsubsection{Free additive convolution}
Let $X$ and $Y$ be freely independent random variables affiliated with a von Neumann algebra $\mathcal{M}$ equipped with a normal faithful tracial state $\tau$. Note that, by the definition of the affiliated operator, $E_X(B)$ and $E_Y(B)$ belong to $\mathcal{M}$ for any Borel set $B$ in $\R$, where $E_X$ is the spectral projection of $X$. Denote a probability distribution of $X$ with respect to $\tau$ as $\mu_X$, i.e., $\mu_X((-\infty,\cdot])=\tau(E_X((-\infty,\cdot])))$. 
A probability distribution of $X+Y$ with respect to $\tau$ is denoted by $\mu_X\boxplus \mu_Y$ with the operation $\boxplus$, called {\it free additive convolution}. This was first introduced in \cite{V86}, where $\mu$ and $\nu$ have compact supports. The concept was later extended to probability measures with finite moments (see \cite{Maa}). Finally, it was generalized by Bercovici and Voiculescu \cite{BeVo1993} to the case of probability measures $\mu$ and $\nu$ on $\R$. Denote the $n$-fold free additive convolution of $\mu\in\P$ as $\mu^{\boxplus n}$. 
 
Free additive convolution is characterized by the Voiculescu transform (see \cite{BeVo1993}).  A {\it Cauchy-Stieltjes transform} $G_\mu$ of $\mu\in\P$ is defined by 
\begin{equation*}
G_\mu(z)=\int_{\R}\frac{1}{z-x}\,\mu(\rmd x), \qquad z\in\C^+,
\end{equation*}
where $\C^+$ (resp.\ $\C^-$) denotes the set of complex numbers with
strictly positive (resp.\ strictly negative) imaginary parts.
Note that $\Im(G_\mu(z))<0$ for any $z \in \C^+$. 
A reciprocal Cauchy transform $F_{\mu}:=1/G_{\mu}$ is called an \textit{F-transform}. 
It is easy to see that $F_\mu$ is a Pick function, namely, an analytic function from $\C^+$ to $\C^+$, and $F_\mu(iy)/iy\rightarrow 1$ as $y\rightarrow\infty$.
Conversely, a Pick function is characterized by an F-transform of some probability measure on $\R$:
\begin{prop}\label{exist}(see \cite[Proposition 5.2]{BeVo1993})
If an analytic function $F:\C^{+}\to \C^{+}$ satisfies
$$
\lim_{y\to \infty} \frac{F(iy)}{iy} = 1,
$$
then there exists some $\mu\in\P$ such that 
$F=F_{\mu}$.
\end{prop}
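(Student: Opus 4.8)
The plan is to realise $F$ through the Nevanlinna (Pick) integral representation and then to pass to the reciprocal $1/F$, which will turn out to be a Cauchy--Stieltjes transform. First I would note that $F$ cannot be constant: if $F\equiv c$ then $F(iy)/(iy)\to 0\neq 1$. Hence $F$ is a non-constant analytic map into $\C^{+}$, so by the open mapping theorem $\Im F>0$ throughout $\C^{+}$ and $F$ is a genuine Pick function. It therefore admits the representation
\begin{equation*}
F(z)=\alpha+\beta z+\int_{\R}\frac{1+xz}{x-z}\,\sigma(\rmd x),\qquad z\in\C^{+},
\end{equation*}
with $\alpha\in\R$, $\beta\ge 0$, and $\sigma$ a finite positive Borel measure on $\R$. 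A direct computation gives $\Im\frac{1+xz}{x-z}=\frac{(1+x^{2})\,\Im z}{|x-z|^{2}}$, whence $\Im F(z)\ge \Im z>0$; in particular $F$ has no zeros on $\C^{+}$.

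Next I would pin down $\beta$ from the normalization. Writing $\frac{1+xz}{x-z}$ with $z=iy$ and dividing by $iy$, one checks that the integrand is bounded uniformly in $y\ge 1$ by a constant and tends to $0$ pointwise as $y\to\infty$; dominated convergence against the finite measure $\sigma$ then yields
\begin{equation*}
\lim_{y\to\infty}\frac{F(iy)}{iy}=\beta .
\end{equation*}
Comparing with the hypothesis forces $\beta=1$.

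Now I would set $G:=1/F$. Since $F$ is zero-free with $\Im F>0$, the function $G$ is analytic on $\C^{+}$ and $\Im G=-\Im F/|F|^{2}<0$, so $G\colon\C^{+}\to\C^{-}$. The bound $\Im F(z)\ge \Im z$ gives $|F(iy)|\ge \Im F(iy)\ge y$, hence $|iy\,G(iy)|\le 1$ for all $y>0$, while moreover
\begin{equation*}
iy\,G(iy)=\frac{iy}{F(iy)}=\left(\frac{F(iy)}{iy}\right)^{-1}\longrightarrow 1\qquad(y\to\infty).
\end{equation*}

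It then remains to produce a probability measure $\mu\in\P$ with $G=G_{\mu}$; once this is done, $F=1/G=1/G_{\mu}=F_{\mu}$, as desired. For this I would use Stieltjes inversion: the positive measures $\mu_{y}(\rmd x):=-\pi^{-1}\,\Im G(x+iy)\,\rmd x$ admit a weak cluster point $\mu$ as $y\downarrow 0$, and one recovers $G(z)=\int_{\R}(z-x)^{-1}\mu(\rmd x)$ on $\C^{+}$. I expect the main obstacle to be showing that $\mu$ is a genuine \emph{probability} measure, i.e.\ that its total mass equals exactly $1$ with no escape of mass to infinity. This is precisely where the two facts established above enter: the uniform estimate $|iy\,G(iy)|\le 1$ forces $\mu$ to be a sub-probability measure, while the exact normalization $\lim_{y\to\infty}iy\,G(iy)=1$ supplies the tightness needed to rule out loss of mass and to conclude $\mu(\R)=1$.
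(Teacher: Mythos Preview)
The paper does not prove this proposition; it merely quotes it from \cite[Proposition~5.2]{BeVo1993} as a known fact, so there is no argument in the paper to compare against.

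Your approach is the standard one and is essentially correct. Two minor points. First, you assert $\Im F(z)\ge\Im z$ before establishing $\beta=1$; at that stage the Nevanlinna representation only gives $\Im F(z)\ge\beta\,\Im z$. This is harmless: the sole conclusion you draw there (that $F$ is zero-free) already follows from $\Im F>0$, and once $\beta=1$ is in hand the full inequality is valid and yields $|F(iy)|\ge y$ as you use it later.

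Second, the final step via weak cluster points of $\mu_{y}$ and Stieltjes inversion is correct in spirit but somewhat roundabout, and your own wording (``I expect the main obstacle\ldots'') suggests you are leaving the tightness verification unfinished. A cleaner route avoids this entirely: apply the Nevanlinna representation once more, now to the Pick function $-G$, writing $-G(z)=a'+b'z+\int_{\R}\frac{1+xz}{x-z}\,\rho(dx)$. From $G(iy)/(iy)\to0$ one reads off $b'=0$; from $\Re\big(iyG(iy)\big)=y\,\Im(-G(iy))\to1$ and monotone convergence one gets $\int_{\R}(1+x^{2})\,\rho(dx)=1$, so $\mu(dx):=(1+x^{2})\rho(dx)$ is a probability measure. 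The identity $\frac{1+xz}{x-z}=\frac{1+x^{2}}{x-z}-x$ then yields $G(z)=G_{\mu}(z)-c$ for some real constant $c$, and the normalization $iyG(iy)\to1$ forces $c=0$.
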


In classical probability theory, additive convolution $\ast$ is characterized by the cumulant transform: for $\mu,\nu\in \P$ we have that $\log\widehat{\mu\ast\nu}(z)=\log\hat{\mu}(z)+\log\hat{\nu}(z)$ for all $z\in \R$ (see e.g., \cite{Sato}). In free probability theory, there is a similar characterization of free additive convolution using the free analog of the cumulant transform. For any $\mu\in\P$ and $\lambda>0$,  there exist 
positive numbers $\alpha,\beta$, and $M$ such that $F_{\mu}$ is univalent  
on the set $\Gamma_{\alpha,\beta}:=\{z \in \C^{+} \,|\, \Im(z) >\beta,
|\Re(z)|<\alpha \Im(z)\}$ and 
$F_{\mu}(\Gamma_{\alpha,\beta})\supset\Gamma_{\lambda,M}$. Thus, the compositional right inverse $F^{-1}_{\mu}$ is defined on
$\Gamma_{\lambda,M}$. The {\it Voiculescu transform} $\varphi_\mu$ is defined by 
\begin{align*}
 \varphi_\mu(z):=F_\mu^{-1}(z)-z, \qquad z\in \Gamma_{\lambda,M}.
\end{align*}
It has been shown \cite{BeVo1993} that $\varphi_{\mu\boxplus \nu}(z)=\varphi_\mu(z)+\varphi_\nu(z)$ 
on the intersection of the domains where $\varphi_\mu$ and $\varphi_\nu$ are defined.

\subsubsection{Free infinitely divisible distributions}

Since the work of Bercovici and Pata \cite{BP}, who found a bijection between the classical and freely infinitely divisible distributions, the class of infinitely divisible distributions in free probability has been intensively studied. 
A probability measure $\mu\in\P$ is said to be {\it freely infinitely divisible} (FID) if, for any $n\in \N$, there exists some $\mu_n\in\P$ such that $\mu=\mu_n^{\boxplus n}$. The class of FID distributions is written as ${\rm ID}(\boxplus)$, introduced in \cite{BeVo1993}. Recently, many FID distributions have been discovered: a normal distribution \cite{BBLS11}, some classical stable laws \cite{HSW}, some gamma distributions that include the chi-square distributions \cite{Has14}, a classical Meixner distribution \cite{BH13}, and the Fuss-Catalan distribution \cite{MSU20}.

FID distributions can be characterized by those Voiculescu transforms via complex analysis. 
Bercovici and Voiculescu \cite{BeVo1993} gave the following
important result on FID distributions:

\begin{prop}\label{prop:Voiculescu transform}
A probability measure $\mu\in\P$ is in ${\rm ID}(\boxplus)$ if and only if
the Voiculescu transform $\varphi_{\mu}$ has an analytic extension
defined on $\C^{+}$ with values in $\C^{-}\cup \R$.
\end{prop}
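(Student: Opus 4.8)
The plan is to route both implications through the Nevanlinna (equivalently, free L\'evy--Khintchine) integral representation of the Voiculescu transform, using as black boxes only the additivity $\varphi_{\mu\boxplus\nu}=\varphi_\mu+\varphi_\nu$ and the existence criterion of Proposition~\ref{exist}. The observation that ties everything together is that an analytic function $\C^+\to\C^-\cup\R$ is, up to sign, a Pick function, and that the normalization $F_\mu(iy)/iy\to1$ forces $\varphi_\mu(z)=o(z)$ nontangentially, which pins down the shape of that representation.

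For the forward implication, suppose $\mu\in{\rm ID}(\boxplus)$. The substantive input is the free L\'evy--Khintchine representation, which gives
\[
\varphi_\mu(z)=\gamma+\int_\R\frac{1+xz}{z-x}\,\sigma(\rmd x),\qquad z\in\C^+,
\]
with $\gamma\in\R$ and $\sigma$ a finite positive measure. The half-plane property is then an elementary computation: the partial-fraction identity $\frac{1+xz}{z-x}=x+\frac{1+x^2}{z-x}$ shows that $\Im\frac{1+xz}{z-x}=-(1+x^2)\,\Im(z)/|z-x|^2<0$ for every $x\in\R$ and $z\in\C^+$, so integrating against the positive measure $\sigma$ and adding the real constant $\gamma$ yields $\Im\varphi_\mu(z)\le0$ on all of $\C^+$. (The representation itself can be obtained by applying the free limit theorem to the infinitesimal array coming from the $\boxplus$-semigroup $(\mu_t)$ with $\varphi_{\mu_t}=t\varphi_\mu$.)

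For the converse, assume $\varphi_\mu$ extends analytically to $\C^+$ with values in $\C^-\cup\R$. Then $\psi:=-\varphi_\mu$ maps $\C^+$ into $\C^+\cup\R$; if $\psi$ is a real constant then $\mu$ is a point mass and is trivially FID, and otherwise the open mapping theorem gives $\psi:\C^+\to\C^+$, i.e.\ $\psi$ is a Pick function. Its Nevanlinna representation reads $\psi(z)=\alpha+\beta z+\int_\R\frac{1+xz}{x-z}\,\sigma(\rmd x)$ with $\alpha\in\R$, $\beta\ge0$ and $\sigma$ finite positive; the normalization $\varphi_\mu(z)=o(z)$ forces $\beta=\lim_{y\to\infty}\psi(iy)/(iy)=0$, so that $\varphi_\mu$ itself has the free L\'evy--Khintchine form displayed above. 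For each $n\in\N$ the function $\tfrac1n\varphi_\mu$ has the very same form (with $\gamma/n$ and $\sigma/n$), hence again maps $\C^+$ into $\C^-\cup\R$ and is $o(z)$; I would then produce $\mu_n\in\P$ with $\varphi_{\mu_n}=\tfrac1n\varphi_\mu$ by showing that the candidate $z\mapsto z+\tfrac1n\varphi_\mu(z)$ is invertible on a truncated cone and that its inverse is a Pick function $F$ with $F(iy)/iy\to1$, so that Proposition~\ref{exist} supplies $\mu_n$ with $F_{\mu_n}=F$. Finally, additivity gives $\varphi_{\mu_n^{\boxplus n}}=n\varphi_{\mu_n}=\varphi_\mu$, and since the Voiculescu transform determines the measure we conclude $\mu_n^{\boxplus n}=\mu$ for every $n$; hence $\mu\in{\rm ID}(\boxplus)$.

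The main obstacle is precisely the existence step in the converse: verifying that $z+\tfrac1n\varphi_\mu(z)$ is univalent on some cone $\Gamma_{\lambda,M}$ and that its compositional inverse is a genuine $F$-transform with the correct behavior at infinity. This is the same delicate control of $\varphi_\mu$ near $\infty$ (the $o(z)$ estimate together with univalence on cones) that is already needed to define the Voiculescu transform, and handling it uniformly in $n$ is the technical heart of the argument; the forward direction, by contrast, is routine once the integral representation is granted.
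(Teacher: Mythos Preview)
The paper does not prove this proposition; it is quoted from Bercovici--Voiculescu \cite{BeVo1993} as background, and the Pick--Nevanlinna representation is then recorded as a \emph{consequence}. So there is no in-paper proof to compare against, only the original source.

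Your forward implication is circular in this paper's logical order: the integral representation of $\varphi_\mu$ appears immediately \emph{after} Proposition~\ref{prop:Voiculescu transform} and is derived from it, not the other way around. Your parenthetical escape route---obtaining the representation from the free limit theorem applied to the semigroup $(\mu_t)$ with $\varphi_{\mu_t}=t\varphi_\mu$---does not break the circle, because the existence of $\mu_t$ for all $t>0$ is itself established in \cite{BeVo1993} \emph{via} the analytic extension of $\varphi_\mu$, i.e.\ via this very proposition (and the Bercovici--Pata limit theorem likewise rests on it). The non-circular argument in \cite{BeVo1993} is more elementary and avoids any integral representation: for every $\mu$ one has $\Im F_\mu(z)\ge\Im z$, hence $\Im\varphi_\mu\le0$ wherever $\varphi_\mu$ is defined; writing $\mu=\mu_n^{\boxplus n}$ gives $\varphi_\mu=n\varphi_{\mu_n}$ on overlapping cones, and since $\mu_n\xrightarrow{w}\delta_0$ the domains of the $\varphi_{\mu_n}$ exhaust $\C^+$, which yields the extension with values in $\C^-\cup\R$.

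Your converse follows the right strategy and matches what \cite{BeVo1993} does. The passage through the Nevanlinna representation is an unnecessary detour---once you know $\varphi_\mu:\C^+\to\C^-\cup\R$, the same is trivially true of $\tfrac1n\varphi_\mu$, and $o(z)$ is inherited---but it is not wrong. You correctly flag the technical core: that $z\mapsto z+\tfrac1n\varphi_\mu(z)$ has a right inverse which is a genuine $F$-transform. In \cite{BeVo1993} this is not done by ad hoc univalence estimates on cones but through a direct characterization of which analytic functions arise as $F_\mu^{-1}$; that characterization is what makes the existence step clean.
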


Namely, if $\mu$ is FID, then the function $z\mapsto -\varphi_\mu(z)$ has an analytic extension to the Pick function (denoted by the same symbol $\varphi_\mu$). Then, for $\mu\in {\rm ID}(\boxplus)$, its Voiculescu transform $\varphi_\mu$ has the following representation (called a {\it Pick-Nevanlinna representation}):
\begin{align*}
\varphi_\mu(z)=b+\int_\R \frac{1+xz}{z-x}\tau(dx), \qquad z\in \C^+,
\end{align*}
for some $b\in\R$ and a finite measure $\tau$ on $\R$. The pair $(b,\tau)$ is uniquely determined by the FID distribution $\mu$, and is called the {\it free characteristic pair} (or {\it free generating pair}, see \cite[Definition 2.9]{BNT02}) for the FID distribution $\mu$.

The FID distribution is also characterized by admitting the L\'{e}vy-Khintchine representation in terms of the $R$-transform. The {\it R-transform} (or {\it free cumulant transform}) $R_\mu$ of $\mu\in\P$ is defined by
\begin{align*}
 R_\mu(z)=z\varphi_\mu\left(\frac{1}{z}\right), \quad\text{for all $z\in \C^-$ such that
  $1/z \in \Gamma_{\lambda,M}$}.
\end{align*}
Note that $1/z\in \Gamma_{\lambda, M}$ is equivalent to $z\in \Delta_{\lambda,1/M}$, where
\begin{align}\label{Delta}
\Delta_{\alpha,\beta}:=\{z\in \C^-: |\Re(z)| <-\alpha \Im(z), |z|<\beta\}, \qquad \alpha,\beta>0.
\end{align}

The free version of the L\'evy-Khintchine representation amounts
to the statement that $\mu\in {\rm ID}(\boxplus)$ if and only if there exist 
$\eta\in\R$, $a\ge 0$, and a L{\'e}vy measure
$\nu$ such that  
\begin{equation}
\label{free-LK-rep}
 R_{\mu}(z) = a z^{2}+ \gamma z+ 
\int_{\R}\left(\frac{1}{1- z x}-1-z x \mathbf{1}_{[-1,1]}(x)\right)\nu(\rmd x), \qquad z\in \C^-.
\end{equation}
The triplet $(a,\nu, \gamma)$, referred to as the {\it free characteristic triplet} for $\mu$, is uniquely determined by $\mu \in {\rm ID}(\boxplus)$. 
The measure $\nu$ is said to be the {\it free L\'evy measure} for $\mu$. 
Furthermore, the representation in \eqref{free-LK-rep} is called the {\it free L\'{e}vy-Khintchine representation} of an FID distribution $\mu$. The relation between the free characteristic triplet $(a,\nu,\gamma)$ and free characteristic pair $(b,\tau)$ for the FID distribution $\mu$ is as follows (see \cite{BNT06}):
\begin{equation}\label{Triplet}
\begin{split}
&a=\tau(\{0\}),\\
&\nu({\rm d}x)=\frac{1+x^2}{x^2}\cdot \mathbf{1}_{{\mathbb R}\setminus\{0\}}(x) \
\tau({\rm d}x),\\
&\gamma=b+\int_{{\mathbb R}}x\Big(\mathbf{1}_{[-1,1]}(x)-\frac{1}{1+x^2}\Big) \
\nu({\rm d}x).
\end{split}
\end{equation}

Let $\mu\in\P$ be given. 
Then, a discrete semigroup $\{\mu^{\boxplus n}\}_{n\in\N}$ can be embedded in a continuous family $\{\mu_t\}_{t\ge 1}$ of probability measures on $\R$, so that $\mu_1=\mu$ and $\mu_t\boxplus \mu_s=\mu_{t+s}$ for $t,s\ge 1$. 
Initially, Bercovici and Voiculescu \cite{BV} showed the existence of $\{\mu_{t}\}$ for sufficiently large $t$ and for $\mu \in \P$ having a compact support. 
Later, Nica and Speicher \cite{NS96} improved their result to $t\ge 1$, but $\mu$ still required a compact support.
Finally, Belinschi and Bercovici \cite{BB04} extended this further to a general probability measure.

This property can be expressed in the following way:
for any $\mu\in\P$ and $t\ge 1$, there exists $\mu_t\in\P$ such that $R_{\mu_t}(z)=tR_\mu(z)$ on the intersection of the domains of $R_{\mu_t}$ and $R_\mu$. 
We denote the above probability measure $\mu_t$ as $\mu^{\boxplus t}$. Furthermore, Bercovici and Voiculescu \cite{BeVo1993} showed that a partial semigroup $\{\mu^{\boxplus t}\}_{t\ge 1}$ can be embedded in the continuous semigroup $\{\mu^{\boxplus t}\}_{t\ge0}$, so that $\mu^{\boxplus 0}=\delta_0$, $\mu^{\boxplus 1}=\mu$, and $\mu^{\boxplus t}\boxplus \mu^{\boxplus s}=\mu^{\boxplus t+s}$ for $t,s\ge 0$ when $\mu\in I(\boxplus)$. This satisfies $R_{\mu^{\boxplus t}}(z)=tR_\mu(z)$ for all $z\in \C^-$. 

\subsubsection{The Bercovici-Pata bijection}

Let $\Lambda: {\rm ID}(\ast)\to {\rm ID}(\boxplus)$ be the Bercovici-Pata bijection (see  \cite[Theorem 1.2]{BP} and \cite{BNT06}). This bijection satisfies
\begin{align*}
\Lambda(\mu\ast \nu)=\Lambda(\mu)\boxplus \Lambda(\nu)
\end{align*}
for all $\mu,\nu\in {\rm ID}(\ast)$, and is continuous with respect to the weak convergence. Moreover, $\Lambda(\mu)\in {\rm ID}(\boxplus)$ has a free characteristic triplet $(a,\nu,\gamma)$ for $\mu\in {\rm ID}(\ast)$ with a characteristic triplet $(a,\nu,\gamma)$. For example, we obtain
\begin{enumerate}[(1)]
\item $\Lambda(\mathbf{N}(m,\sigma^2))=\mathbf{S}(m,\sigma^2)$ for $m\in\R$ and $\sigma>0$;
\item $\Lambda(\mathbf{Po}(\lambda))=\mathbf{MP}(\lambda)$ for $\lambda>0$;
\item $\Lambda(\mathbf{C}_a)=\mathbf{C}_a$ for $a>0$,
\end{enumerate}
where $\mathbf{N}(m,\sigma^2)$ is the normal distribution with mean $m\in \R$ and variance $\sigma^2$, $\mathbf{S}(m,\sigma^2)$ is the semicircle law with mean $m\in\R$ and variance $\sigma^2$, $\mathbf{Po}(\lambda)$ is the Poisson distribution with a parameter $\lambda>0$, $\mathbf{MP}(\lambda)$ is the Marchenko-Pastur law with a paramater $\lambda>0$ and $\mathbf{C}_a$ is the symmetric Cauchy distribution with parameter $a>0$. 

\section{Freely quasi-infinitely divisible distributions}

\subsection{Definition and convolution properties}

In this section, we introduce FQID distributions.

By Proposition \ref{prop:Voiculescu transform}, $\mu\in\P$ is FID if and only if the Voiculescu transform $\varphi_\mu$ has an analytic extension to $\C^+$ with values in $\C^-\cup \R$.
FQID distributions are defined by employing a finite  ``signed''  measure $\tau$ in the  Pick-Nevanlinna representation of $\varphi_\mu$.

\begin{defn}
A probability measure $\mu\in\P$ is said to be {\it freely quasi-infinitely divisible} if
\begin{align*}
\varphi_\mu(z)=b+\int_{\R} \frac{1+xz}{z-x}\tau(dx), \qquad z\in \C^+,
\end{align*} 
for some $b\in\R$ and a finite signed measure $\tau$ on $\R$. We call $(b,\tau)$ a {\it free characteristic pair} of the FQID distribution $\mu$.
\end{defn}

One immediate consequence of the definition is the following property.
\begin{prop}\label{prop:nonFQID}
If $\mu$ is FQID, then the Voiculescu transform $\varphi_\mu$ has an analytic continuation to $\C^+$. 
\end{prop}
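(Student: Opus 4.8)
The plan is to recognize the integral in the FQID representation as a function that is already holomorphic on all of $\C^{+}$, and then to invoke uniqueness of analytic continuation. Set
\[
\Phi(z):=b+\int_{\R}\frac{1+xz}{z-x}\,\tau(\rmd x),\qquad z\in\C^{+}.
\]
For each fixed $z\in\C^{+}$ the kernel $x\mapsto(1+xz)/(z-x)$ is continuous and bounded on $\R$: the denominator obeys $|z-x|\ge\Im(z)>0$, and as $|x|\to\infty$ the kernel tends to $-z$, so it extends continuously to $[-\infty,+\infty]$. Since $\tau$ is a finite signed measure (so that $|\tau|(\R)<\infty$), the integral converges absolutely and $\Phi(z)$ is well defined.

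The core of the argument is to prove that $\Phi$ is holomorphic on $\C^{+}$. For each fixed $x\in\R$ the map $z\mapsto(1+xz)/(z-x)$ is holomorphic on $\C^{+}$, its sole pole $z=x$ lying on $\R$. To transfer holomorphy through the integral I would establish a bound uniform over compact subsets of $\C^{+}$. Fix a compact set $K\subset\C^{+}$ with $\Im(z)\ge\delta$ and $|z|\le R$ on $K$. Splitting $\R$ into $|x|\le 2R$ and $|x|>2R$ (on the first $|z-x|\ge\delta$ controls the kernel; on the second both $|1+xz|$ and $|z-x|$ grow like $|x|$, so the kernel stays bounded) produces a finite constant $C_{K}$ with
\[
\left|\frac{1+xz}{z-x}\right|\le C_{K}\qquad\text{for all }z\in K,\ x\in\R.
\]
With this bound (and working with the Jordan parts $\tau^{\pm}$ if one wishes to reduce to positive measures), Fubini's theorem permits interchanging $\int_{\R}$ with the contour integral over the boundary $\partial\Delta$ of any triangle $\Delta\subset\C^{+}$; since $\oint_{\partial\Delta}(1+xz)/(z-x)\,\rmd z=0$ for every $x$ by Cauchy's theorem, we get $\oint_{\partial\Delta}\Phi=0$, and Morera's theorem yields holomorphy of $\Phi$ on $\C^{+}$. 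Alternatively one may differentiate under the integral sign, using the clean identity $\partial_{z}\!\big((1+xz)/(z-x)\big)=-(1+x^{2})/(z-x)^{2}$, which satisfies the same type of estimate.

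Finally I would close by analytic continuation. By the definition of FQID, $\varphi_{\mu}$ coincides with $\Phi$ on the truncated cone $\Gamma_{\lambda,M}$ on which $\varphi_{\mu}$ is a priori defined. Because $\Phi$ is holomorphic on the connected open set $\C^{+}\supset\Gamma_{\lambda,M}$, it is an analytic continuation of $\varphi_{\mu}$ to $\C^{+}$, and this continuation is unique by the identity theorem. The only genuinely technical point is the local uniform estimate on the kernel that legitimizes the interchange of limits; this is routine given the explicit form and is dispatched by the two-region splitting above.
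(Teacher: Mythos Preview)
Your argument is correct and is essentially what the paper has in mind: the paper states the proposition as ``one immediate consequence of the definition'' and gives no proof, since the integral in the FQID definition is visibly a holomorphic function of $z$ on $\C^{+}$ (the kernel $(1+xz)/(z-x)$ is bounded in $x$ for each $z\in\C^{+}$ and holomorphic in $z$, and $|\tau|$ is finite). You have simply written out the routine verification---the locally uniform bound on the kernel and the Morera/differentiation-under-the-integral step---that the paper leaves implicit.
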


Below, we state and prove some basic properties of FQID distributions.

\begin{prop}\label{uniquegenerating}
For an FQID distribution, the free characteristic pair is uniquely determined.
\end{prop}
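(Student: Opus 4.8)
The plan is to reduce the statement to the uniqueness of a Poisson (Nevanlinna-type) integral representation. Since $\varphi_\mu$ is the Voiculescu transform canonically attached to $\mu$ and, by Proposition \ref{prop:nonFQID}, admits an analytic continuation to $\C^+$ (necessarily unique, so the function on $\C^+$ is determined by $\mu$ alone), it is enough to show that this function determines the pair $(b,\tau)$. I would therefore suppose that $(b_1,\tau_1)$ and $(b_2,\tau_2)$ are both free characteristic pairs of the same $\mu$, so that
\[
b_1+\int_\R\frac{1+xz}{z-x}\,\tau_1(dx)=\varphi_\mu(z)=b_2+\int_\R\frac{1+xz}{z-x}\,\tau_2(dx),\qquad z\in\C^+ .
\]
Setting $c:=b_1-b_2\in\R$ and $\sigma:=\tau_1-\tau_2$ (a finite signed measure), the goal reduces to the implication: if $c+\int_\R\frac{1+xz}{z-x}\,\sigma(dx)=0$ on $\C^+$, then $c=0$ and $\sigma=0$.

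The next step is to separate $\sigma$ from $c$ by passing to imaginary parts. Writing $z=u+iv$ with $v>0$ and using $\tfrac{1+xz}{z-x}=x+\tfrac{1+x^2}{z-x}$, one computes
\[
\Im\frac{1+xz}{z-x}=-\frac{(1+x^2)\,v}{(u-x)^2+v^2}.
\]
As $c$ is real, taking imaginary parts in the identity above gives
\[
\int_\R\frac{1+x^2}{(u-x)^2+v^2}\,\sigma(dx)=0,\qquad u\in\R,\ v>0 .
\]
For each fixed $v$ the integrand is bounded in $x$ and tends to $1$ as $|x|\to\infty$, so the integral converges absolutely against the finite measure $|\sigma|$; this is the only point at which finiteness of $\tau$ is used. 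In terms of the (possibly infinite) positive measures $\rho^\pm(dx):=(1+x^2)\,\sigma^\pm(dx)$, the last display says exactly that the Poisson integrals of $\rho^+$ and of $\rho^-$ agree on all of $\C^+$.

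It then remains to deduce $\sigma=0$ from the vanishing of this Poisson integral, after which substituting $\sigma=0$ back leaves $c=0$ and finishes the proof. The measures $\rho^\pm$ satisfy the normalization $\int_\R(1+x^2)^{-1}\rho^\pm(dx)=\sigma^\pm(\R)<\infty$, which is precisely the weighted class in which a positive measure is uniquely recovered from its Poisson integral; hence $\rho^+=\rho^-$, and since $1+x^2>0$ this forces $\sigma^+=\sigma^-$, i.e.\ $\sigma=0$.

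I expect this last step to be the only real obstacle, the subtlety being that $\rho^\pm=(1+x^2)\sigma^\pm$ are in general \emph{infinite} measures, so the naive assertion ``the Poisson integral of a finite measure converges weakly to it'' does not apply directly. I would resolve this by a self-contained argument: fix $\phi\in C_c(\R)$, multiply the vanishing identity by $v/\pi$, integrate in $u$, and apply Fubini (legitimate for fixed $v$ because the $u$-integral of the kernel against $\phi$ decays like $|x|^{-2}$, taming the $1+x^2$ weight) to obtain $\int_\R(P_v*\phi)(x)\,(1+x^2)\,\sigma(dx)=0$, where $P_v*\phi$ denotes the Poisson average of $\phi$. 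Letting $v\downarrow0$, one has $(P_v*\phi)(x)\to\phi(x)$ with $(P_v*\phi)(x)(1+x^2)$ bounded uniformly in $v\in(0,1]$ by a $|\sigma|$-integrable function, so dominated convergence yields $\int_\R\phi(x)(1+x^2)\,\sigma(dx)=0$. As $\phi$ ranges over $C_c(\R)$ the functions $x\mapsto\phi(x)(1+x^2)$ again exhaust $C_c(\R)$, whence $\sigma=0$, completing the argument.
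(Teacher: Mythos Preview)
Your argument is correct, but it takes a genuinely different route from the paper's. The paper's proof is a two-line reduction: writing $\tau_j=\tau_j^+-\tau_j^-$ via Hahn--Jordan, the identity $\varphi_\mu=\varphi_\mu$ rearranges to
\[
(b_1-b_2)+\int_\R\frac{1+xz}{z-x}\,(\tau_1^++\tau_2^-)(dx)=\int_\R\frac{1+xz}{z-x}\,(\tau_1^-+\tau_2^+)(dx),
\]
and since both sides are now Pick--Nevanlinna representations with \emph{nonnegative} finite measures, each side is the Voiculescu transform of an FID distribution. The known uniqueness of the free characteristic pair in the FID case then gives $b_1=b_2$ and $\tau_1^++\tau_2^-=\tau_1^-+\tau_2^+$, i.e.\ $\tau_1=\tau_2$. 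By contrast, you work directly with the signed measure $\sigma=\tau_1-\tau_2$, extract the Poisson integral of $(1+x^2)\sigma$ by taking imaginary parts, and recover $\sigma=0$ via an approximate-identity argument against $C_c(\R)$. Your approach is self-contained---it does not invoke the FID uniqueness theorem from \cite{BeVo1993} as a black box, and in effect re-derives the Nevanlinna uniqueness in the form needed---at the cost of being noticeably longer. The paper's trick of moving the negative parts to the opposite side to reduce to the positive-measure case is worth remembering: it is the standard manoeuvre for transferring uniqueness results from measures to signed measures.
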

\begin{proof}
Let $\mu$ be an FQID distribution on $\mathbb{R}$, and let $(b_{1}, \tau_{1})$ and 
$(b_{2}, \tau_{2})$ be free characteristic pairs of $\mu$. 
Let $(\tau_1^+,\tau_1^-)$ and $(\tau_2^+,\tau_2^-)$ be pairs of finite nonnegative measures given by the Hahn-Jordan decomposition of $\tau_1$ and $\tau_2$, respectively. 
Then, we have that
\begin{align*}
b_1+&\int_\mathbb{R}\frac{1+xz}{z-x} \tau_1^+(dx)-\int_\mathbb{R}\frac{1+xz}{z-x}\tau_1^-(dx)\\
&=\varphi_\mu(z)=b_2+\int_\mathbb{R}\frac{1+xz}{z-x} \tau_{2}^+(dt)-\int_\mathbb{R}\frac{1+xz}{z-x} \tau_2^-(dt), \qquad z\in \C^+.
\end{align*}
Therefore, we have
\begin{align*}
(b_1-b_2)+\int_\mathbb{R}\frac{1+xz}{z-x}(\tau_1^++\tau_2^-)(dt)=\int_\mathbb{R}\frac{1+xz}{z-x} (\tau_1^{-}+\tau_2^{+})(dt), \qquad z\in \C^+.
\end{align*}
Both sides correspond to the Voiculescu transform of some FID probability measure $\nu$. Hence, the pairs $(b_{1}-b_{2}, \tau_{1}^{+}+\tau_{2}^{-})$ and $(0, \tau_{1}^{-}+\tau_{2}^{+})$ are free characteristic pairs of the FID distribution $\nu$. 
By the uniqueness of free characteristic pairs of FID distributions (see \cite{BeVo1993}), we have that $b_1-b_2=0$ and $\tau_1^++\tau_2^-=\tau_1^-+\tau_2^+$. Therefore, $b_{1}=b_{2}$ and $\tau_{1}=\tau_{2}$.
\end{proof}

Let $cB=\{ cx \,|\, x \in B\}$.
If $\rho$ is a Borel measure on $\R$ and $c$ is a nonzero real constant, then the dilation of $\rho$ by $c$ is the measure $\mathbf{D}_{c}(\rho)$ given by
\begin{align*}
\mathbf{D}_{c}(\rho)(B) = \rho(c^{-1}B),
\end{align*}
for any Borel set $B$. 

\begin{prop}\label{basic} We have the following properties.
\begin{enumerate}[\rm (i)]
\item $\mu$ is FQID if and only if there exist $\mu_1,\mu_2\in {\rm ID}(\boxplus)$ such that $\mu\boxplus \mu_1=\mu_2$.
\item If $\mu$ is FQID and $\mu^{\boxplus t}$ exists for some $t>0$, then $\mu^{\boxplus t}$ is also FQID.
\item If $\mu,\nu$ are FQID, $c\ne 0$ and $a\in\R$, then so are $\mu\boxplus \nu$, $\mathbf{D}_c(\mu)$ and $\mu\boxplus \delta_a$.
\item If $\mu$ is FID and $\mu\boxplus \nu$ is FQID, then $\nu$ is also FQID.
\item If $\mu$ is FQID and $\nu$ is not FQID, then $\mu\boxplus \nu$ is not FQID.
\end{enumerate}
\end{prop}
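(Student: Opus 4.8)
The plan is to reduce every part to the additivity property $\varphi_{\mu\boxplus\nu}=\varphi_\mu+\varphi_\nu$ of the Voiculescu transform, combined with the Hahn--Jordan decomposition of a finite signed measure and the Pick--Nevanlinna representation of FID transforms. I would prove (i) first, since (iii) and (v) are built on it. For the forward direction, decompose the signed measure $\tau=\tau^{+}-\tau^{-}$ of an FQID $\mu$ and let $\mu_1,\mu_2$ be the distributions with free characteristic pairs $(0,\tau^{-})$ and $(b,\tau^{+})$; these lie in ${\rm ID}(\boxplus)$ precisely because $\tau^{\pm}$ are nonnegative finite measures, so their transforms are genuine Pick--Nevanlinna representations. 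On the common truncated cone where all three transforms are defined one reads $\varphi_\mu+\varphi_{\mu_1}=\varphi_{\mu_2}$, i.e. $\varphi_{\mu\boxplus\mu_1}=\varphi_{\mu_2}$, and hence $\mu\boxplus\mu_1=\mu_2$. For the converse, if $\mu\boxplus\mu_1=\mu_2$ with $\mu_i$ FID having pairs $(b_i,\tau_i)$, I would subtract on the common cone to obtain $\varphi_\mu=(b_2-b_1)+\int_{\R}\frac{1+xz}{z-x}(\tau_2-\tau_1)(dx)$; as $\tau_2-\tau_1$ is a finite signed measure and the right-hand side is analytic on all of $\C^{+}$, uniqueness of analytic continuation shows $\mu$ is FQID.

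Part (ii) is immediate from $\varphi_{\mu^{\boxplus t}}=t\varphi_\mu$: this gives $\varphi_{\mu^{\boxplus t}}=tb+\int_{\R}\frac{1+xz}{z-x}(t\tau)(dx)$, and $t\tau$ is again a finite signed measure. For (iii) I would route the dilation and shift through (i) rather than through explicit kernel manipulations. The free-convolution statement is direct from additivity, since $\tau_\mu+\tau_\nu$ is a finite signed measure. For $\mathbf{D}_c(\mu)$ with any $c\neq 0$ I would use two standard operator-level facts: dilation commutes with free convolution (if $X,Y$ are free then so are $cX,cY$, whence $\mathbf{D}_c(\mu\boxplus\nu)=\mathbf{D}_c(\mu)\boxplus\mathbf{D}_c(\nu)$) and dilation preserves ${\rm ID}(\boxplus)$. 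Applying $\mathbf{D}_c$ to $\mu\boxplus\mu_1=\mu_2$ yields $\mathbf{D}_c(\mu)\boxplus\mathbf{D}_c(\mu_1)=\mathbf{D}_c(\mu_2)$ with the latter two FID, so $\mathbf{D}_c(\mu)$ is FQID by (i); this handles negative $c$ uniformly, which is why I avoid computing the transform of the kernel. The shift $\mu\boxplus\delta_a$ is simply the free-convolution case applied to the FID (hence FQID) measure $\delta_a$, whose transform is the constant $a$.

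Part (iv) is another subtraction: with $\lambda:=\mu\boxplus\nu$ FQID and $\mu$ FID, I set $\varphi_\nu=\varphi_\lambda-\varphi_\mu$ on the common cone, read off the pair $(b_\lambda-b_\mu,\tau_\lambda-\tau_\mu)$ with $\tau_\lambda-\tau_\mu$ a finite signed measure, and conclude by analytic continuation as in (i). Finally (v) follows by contraposition from the earlier parts: assuming $\mu\boxplus\nu$ is FQID and choosing $\mu_1,\mu_2\in{\rm ID}(\boxplus)$ with $\mu\boxplus\mu_1=\mu_2$ via (i), one has $(\mu\boxplus\nu)\boxplus\mu_1=\mu_2\boxplus\nu$; the left side is FQID by (iii), so $\mu_2\boxplus\nu$ is FQID, and since $\mu_2$ is FID, part (iv) forces $\nu$ to be FQID, contradicting the hypothesis.

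The step I expect to demand the most care is the repeated passage between the truncated cones $\Gamma_{\lambda,M}$, on which the Voiculescu transforms are a priori defined, and all of $\C^{+}$: each subtraction identity is initially valid only on a common cone, so I must invoke Proposition \ref{prop:nonFQID} for the analytic continuation of FQID transforms, uniqueness of analytic continuation to promote the identity, and Proposition \ref{uniquegenerating} to pin down the resulting free characteristic pair. Beyond this bookkeeping, the only genuinely external inputs are the two standard facts that dilation commutes with $\boxplus$ and preserves ${\rm ID}(\boxplus)$, which I would justify at the operator level.
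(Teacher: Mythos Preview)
Your proposal is correct and follows essentially the same strategy as the paper: reduce everything to the characterization (i) via Hahn--Jordan, then push convolution, dilation, and shift through that characterization using the standard facts that $\boxplus$ is associative, that $\mathbf{D}_c$ commutes with $\boxplus$, and that ${\rm ID}(\boxplus)$ is closed under both operations. Your treatment of (v) by chaining (i), (iii), and (iv) is slightly more modular than the paper's direct unwinding, and your handling of (ii) and (iv) directly via the transform (rather than routing through (i) as the paper does) is equally valid; these are cosmetic differences, and your extra care about the passage from $\Gamma_{\lambda,M}$ to $\C^+$ is if anything more precise than the paper, which dismisses (i) as ``obvious from the Hahn--Jordan decomposition theorem.''
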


\begin{proof}
(i) is obvious from the Hahn-Jordan decomposition theorem. (ii) is a consequence of (i).

We prove (iii). 
By (i), there exist $\mu_1,\mu_2,\nu_1,\nu_2\in {\rm ID}(\boxplus)$ such that $\mu_1\boxplus \mu=\mu_2$ and $\nu_1\boxplus \nu=\nu_2$. Therefore,
\begin{align*}
(\mu_1\boxplus \nu_1)\boxplus (\mu\boxplus\nu)=(\mu_1\boxplus\mu)\boxplus(\nu_1\boxplus\nu)=\mu_2\boxplus \nu_2.
\end{align*}
Since $\mu_1\boxplus\nu_1, \mu_2\boxplus\nu_2\in {\rm ID}(\boxplus)$, the measure $\mu\boxplus\nu$ is FQID.
Using (i) again, we have that $\mu_1\boxplus\mu=\mu_2$. Hence,
\begin{align*}
\mathbf{D}_c(\mu_1)\boxplus \mathbf{D}_c(\mu)=\mathbf{D}_c(\mu_1\boxplus\mu)=\mathbf{D}_c(\mu_2).
\end{align*}
As $\mathbf{D}_c(\mu_1),\mathbf{D}_c(\mu_2)\in {\rm ID}(\boxplus)$, the measure $\mathbf{D}_c(\mu)$ is FQID.
Furthermore, it follows from (i) that $\mu\boxplus \mu_1=\mu_2$, and so
\begin{align*}
(\mu\boxplus \delta_a )\boxplus \mu_1= \delta_a\boxplus(\mu\boxplus\mu_1)=\delta_a\boxplus\mu_2.
\end{align*}
Since $\delta_a\boxplus \mu_1, \delta_a\boxplus\mu_2\in {\rm ID}(\boxplus)$, the measure $\mu\boxplus \delta_a$ is also FQID.

Next, we show (iv). As $\mu\boxplus\nu$ is FQID, there exist $\mu_1,\mu_2\in {\rm ID}(\boxplus)$ such that 
\begin{align*}
(\mu_1\boxplus \mu)\boxplus \nu=\mu_1\boxplus (\mu\boxplus \nu)=\mu_2.
\end{align*}
As $\mu_1\boxplus\mu$ is FID, we conclude that $\nu$ is FQID.

Finally, we show (v). Assume that $\mu\boxplus \nu$ is FQID. We then obtain $\sigma_1,\sigma_2\in {\rm ID}(\boxplus)$ such that $(\mu\boxplus \nu)\boxplus \sigma_1=\sigma_2$ by (i). Moreover, there exist $\mu_1,\mu_2\in {\rm ID}(\boxplus)$ such that $\mu\boxplus \mu_1=\mu_2$, because $\mu$ is also FQID. Then,
\begin{align*}
\sigma_2\boxplus \mu_1&=\sigma_1\boxplus (\mu\boxplus \nu)\boxplus \mu_1\\
&=\sigma_1\boxplus (\mu\boxplus \mu_1)\boxplus\nu=(\sigma_1\boxplus\mu_2)\boxplus \nu.
\end{align*}
Since $\sigma_2\boxplus \mu_1, \sigma_1\boxplus\mu_2 \in {\rm ID}(\boxplus)$, the measure $\nu$ is FQID, which is a contradiction.
\end{proof}

\begin{defn}
For $\mu_1, \mu_2, \mu\in \P$, we denote by $\mu=\mu_2\boxminus \mu_1$ the case in which 
$$
\varphi_\mu(z)=\varphi_{\mu_2}(z)-\varphi_{\mu_1}(z)
$$ 
holds for $z$ in some domain of $\C^+$. The operation $\boxminus$ is called {\it free deconvolution} (see \cite{ATV} for more information). 
\end{defn}

Using the operator $\boxminus$, we obtain a similar property to that of Proposition \ref{basic}(i), whereby \textit{$\mu$ is FQID if and only if $\mu = \mu_2\boxminus \mu_1$ for some FID distributions $\mu_1,\mu_2$}.

\subsection{Atoms of FQID distributions}

\begin{thm}\label{atom}
An FQID distribution has at most one atom.
\end{thm}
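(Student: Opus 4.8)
The plan is to detect atoms through the boundary behaviour of the reciprocal Cauchy transform $F_\mu = 1/G_\mu$ and to exploit that, for an FQID distribution, $F_\mu^{-1}$ is a \emph{single} analytic function on all of $\C^+$. First I would record the standard description of atoms: a point $a\in\R$ is an atom of $\mu$ precisely when $F_\mu$ has the nontangential limit $F_\mu(a+i0)=0$ together with a finite angular derivative, in which case $(z-a)/F_\mu(z)\to\mu(\{a\})>0$ as $z\to a$ nontangentially. This follows from the decomposition $G_\mu(z)=\mu(\{a\})/(z-a)+r(z)$ with $(z-a)\,r(z)\to0$ nontangentially, whence $F_\mu(z)\sim(z-a)/\mu(\{a\})$. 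In particular every atom of $\mu$ lies over the \emph{single} boundary point $0$, since $F_\mu(a+i0)=0$ for each atom $a$.

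Next, by Proposition~\ref{prop:nonFQID} the Voiculescu transform $\varphi_\mu$ continues analytically to all of $\C^+$, so $H(z):=z+\varphi_\mu(z)$ is a single-valued analytic function on $\C^+$ that agrees with $F_\mu^{-1}$ on $\Gamma_{\lambda,M}$. The core claim is that \emph{if $a$ is an atom of $\mu$, then $H$ has nontangential limit $a$ at $0$}, i.e.\ $\varphi_\mu(z)\to a$ as $z\to0$ nontangentially. To see this, the finite angular derivative of $F_\mu$ at $a$ yields, via the Julia--Carath\'eodory theorem, a local analytic inverse branch $g$ of $F_\mu$ on a cone with vertex $0$, with $F_\mu(g(w))=w$, $g(w)\to a$ as $w\to0$ nontangentially, and $g'(0)=\mu(\{a\})$. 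I would then identify $g$ with $H$ near $0$: both are right inverses of $F_\mu$, and the identity $F_\mu\circ H=\mathrm{id}$, valid on $\Gamma_{\lambda,M}$, propagates by the identity theorem along a path inside $\{z\in\C^+:H(z)\in\C^+\}$ joining $\Gamma_{\lambda,M}$ to a cone at $0$ (the inclusion $H(w)\in\C^+$ near $0$ being ensured by $H(w)\approx a+\mu(\{a\})\,w$). Hence $\varphi_\mu(z)=H(z)-z\to a$.

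The conclusion is then immediate. An analytic function on $\C^+$ has at most one nontangential limit at a fixed boundary point. If $\mu$ had two distinct atoms $a_1\neq a_2$, the previous step would force the nontangential limit of $\varphi_\mu$ at $0$ to equal both $a_1$ and $a_2$, which is impossible. Therefore $\mu$ has at most one atom.

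I expect the branch-matching step to be the main obstacle: one must guarantee that the analytic continuation of $F_\mu^{-1}$ supplied by the FQID hypothesis is the \emph{same} branch as the local inverse $g$ dictated by the atom, with no spurious branching of $F_\mu^{-1}$ intervening between $\Gamma_{\lambda,M}$ and a neighbourhood of $0$. A cleaner route that sidesteps the connectivity bookkeeping is to compute the boundary value directly from the representation $\varphi_\mu(z)=b+\int_\R(1+xz)(z-x)^{-1}\,\tau(dx)$: one shows that its nontangential limit at $0$, which exists whenever $\mu$ has an atom, is a single real number intrinsic to $(b,\tau)$, so that every atom must coincide with it. This trades the branch issue for a check of the borderline behaviour of $(z-x)^{-1}$ near $0$ against the finite signed measure $\tau$, which is the only delicate point in either approach.
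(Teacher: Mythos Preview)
Your strategy is exactly the paper's: show that $H(z):=z+\varphi_\mu(z)$ has nontangential limit $a$ at the single boundary point $0$ whenever $a$ is an atom, and conclude by uniqueness of nontangential limits. The difference is in how that limit is obtained, and your execution has a real gap at precisely the point you flag.

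The paper bypasses the branch-matching problem by reversing the composition. Rather than trying to prove $F_\mu\circ H=\mathrm{id}$ near $0$, it proves $H\circ F_\mu=\mathrm{id}$ on all of $\C^+$: since $F_\mu(\C^+)\subset\C^+$, the composite $H\circ F_\mu$ is analytic on the whole of $\C^+$, and the identity theorem carries the relation from $F_\mu^{-1}(\Gamma_{\lambda,M})$ to $\C^+$ with no connectivity hypothesis needed. One then computes along the curve $C:=F_\mu(a+i(0,\infty))$: the elementary bound $|G_\mu(a+iy)|\ge\mu(\{a\})/y$ forces $C$ to approach $0$ inside a Stolz cone, and along $C$ one has $H(F_\mu(a+iy))=a+iy\to a$. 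A Lindel\"of-type lemma (Lemma~5.11 in Bercovici--Voiculescu) upgrades this curve limit to the full nontangential limit.

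In your argument, the appeal to Julia--Carath\'eodory does not actually supply a local \emph{analytic inverse} $g$; it gives only the asymptotic $F_\mu(z)\sim(z-a)/\mu(\{a\})$, with no injectivity. Even granting such a $g$, the identification $g=H$ is circular as written: you invoke $H(w)\approx a+\mu(\{a\})\,w$ to place a cone at $0$ inside $\{H\in\C^+\}$, but that asymptotic is exactly the conclusion $g=H$ you are after. Your proposed alternative, reading off the boundary value of $\varphi_\mu$ at $0$ from the integral representation, is also not straightforward: the kernel $(1+xz)/(z-x)$ is $1/z$ at $x=0$ and tends to $-1/x$ as $z\to0$ for $x\ne0$, which is not $|\tau|$-integrable near $x=0$ in general, so dominated convergence is unavailable. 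The fix is simply to use $H\circ F_\mu=\mathrm{id}$ instead of $F_\mu\circ H=\mathrm{id}$; then no local inverse, no connectivity argument, and no boundary integral is needed.
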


Theorem \ref{atom} can be shown by following the same argument as in \cite[Proposition 5.12(iii)]{BeVo1993}, because the Voiculescu transform of an FQID distribution has an analytic continuation to $\C^+$. Nevertheless, we include a proof for readers' convenience.

First, we need the following lemma.

\begin{lem}\label{lem:FQIDatom}
Let $\mu$ be an FQID distribution. Then,
\begin{enumerate}[\rm (i)]
\item We have that $F_\mu(z)+\varphi_\mu(F_\mu(z))=z$ for all $z\in\C^+$.
\item Let $\Omega_\mu$ be the component of $\{z\in\C^+: \Im(z+\varphi_\mu(z))>0\}$ that contains $iy$ for some large number $y>0$. Then, $F_\mu(\C^+)=\Omega_\mu$.
\end{enumerate}
\end{lem}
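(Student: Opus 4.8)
The plan is to follow the argument of Bercovici--Voiculescu for FID distributions, the only new ingredient being that Proposition~\ref{prop:nonFQID} guarantees $\varphi_\mu$ is analytic on all of $\C^+$; this is precisely what lets us treat $\psi(w):=w+\varphi_\mu(w)$ as a genuine analytic self-map wherever its imaginary part is positive.

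For (i), I would start from the definition of the Voiculescu transform: on $\Gamma_{\lambda,M}$ we have $F_\mu^{-1}(z)=z+\varphi_\mu(z)$, so that $F_\mu(z+\varphi_\mu(z))=z$ there. Writing $w=F_\mu^{-1}(z)=z+\varphi_\mu(z)$ for $z\in\Gamma_{\lambda,M}$, a direct substitution gives $F_\mu(w)+\varphi_\mu(F_\mu(w))=w$ for every $w$ in the open set $F_\mu^{-1}(\Gamma_{\lambda,M})\subseteq\C^+$. Now both $z\mapsto F_\mu(z)+\varphi_\mu(F_\mu(z))$ and $z\mapsto z$ are analytic on $\C^+$: indeed $F_\mu$ maps $\C^+$ into $\C^+$ and $\varphi_\mu$ extends analytically to $\C^+$, so the composition $\varphi_\mu\circ F_\mu$ is analytic on $\C^+$. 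Since the two functions agree on a nonempty open set, the identity theorem forces them to coincide on all of $\C^+$, which is (i).

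For (ii), set $\psi(w):=w+\varphi_\mu(w)$, an analytic map on $\C^+$. Identity (i) reads $\psi\circ F_\mu=\mathrm{id}_{\C^+}$, which immediately shows that $F_\mu$ is injective on $\C^+$ and that $\Im\psi(F_\mu(z))=\Im z>0$, so $F_\mu(\C^+)\subseteq\{w\in\C^+:\Im(w+\varphi_\mu(w))>0\}$. To locate $F_\mu(\C^+)$ inside the correct component, I would observe that $\Gamma_{\lambda,M}\subseteq F_\mu(\Gamma_{\alpha,\beta})\subseteq F_\mu(\C^+)$, and in particular $it\in F_\mu(\C^+)$ for all large $t$; since $F_\mu(\C^+)$ is connected and these points $it$ lie in $\Omega_\mu$, the whole set $F_\mu(\C^+)$ is contained in $\Omega_\mu$.

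The reverse inclusion is the main point, and I would obtain it by an open--closed argument using connectedness of $\Omega_\mu$. The set $F_\mu(\C^+)$ is open because $F_\mu$ is a nonconstant (indeed injective) analytic map. It is also relatively closed in $\Omega_\mu$: if $w_n=F_\mu(z_n)\to w\in\Omega_\mu$, then $z_n=\psi(w_n)\to\psi(w)=:z$, and since $w\in\Omega_\mu$ gives $\Im\psi(w)>0$ we have $z\in\C^+$; continuity of $F_\mu$ then yields $F_\mu(z)=\lim F_\mu(z_n)=w$, so $w\in F_\mu(\C^+)$. As $F_\mu(\C^+)$ is nonempty, open, and closed in the connected set $\Omega_\mu$, it must equal $\Omega_\mu$. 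The step I expect to require the most care is verifying that $\psi$ is a genuine left inverse landing in $\C^+$ on all of $\Omega_\mu$ --- this is exactly where the definition of $\Omega_\mu$ as a component of $\{\Im(z+\varphi_\mu(z))>0\}$ and the global analyticity of $\varphi_\mu$ from Proposition~\ref{prop:nonFQID} are both indispensable.
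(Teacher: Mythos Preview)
Your proof is correct. Part (i) and the forward inclusion in part (ii) coincide with the paper's argument essentially line for line; your handling of why $F_\mu(\C^+)$ lands in the specific component $\Omega_\mu$ (via connectedness of $F_\mu(\C^+)$ and $it\in F_\mu(\C^+)$ for large $t$) is in fact more explicit than the paper, which leaves that step implicit.

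For the reverse inclusion $\Omega_\mu\subseteq F_\mu(\C^+)$ you take a genuinely different route. The paper argues by analytic continuation: the identity $F_\mu(w+\varphi_\mu(w))=w$ holds on the open set $F_\mu(\C^+)\subseteq\Omega_\mu$, and since $w\mapsto F_\mu(w+\varphi_\mu(w))$ is well defined and analytic on the connected set $\Omega_\mu$ (precisely because $\Im(w+\varphi_\mu(w))>0$ there), the identity theorem extends it to all of $\Omega_\mu$, giving $w=F_\mu(\psi(w))\in F_\mu(\C^+)$. Your open--closed connectedness argument reaches the same conclusion by purely topological means, using continuity of $\psi$ and $F_\mu$ rather than a second invocation of the identity theorem. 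Both proofs are short and standard; neither has a real advantage over the other.
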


\begin{proof}
(i) Note that $\varphi_\mu(z)=F_\mu^{-1}(z)-z$ for all $z\in\Gamma_{\lambda,M}$ for some $\lambda,M>0$. Therefore,
\begin{align*}
\varphi_\mu(F_\mu(z))=z-F_\mu(z),\qquad z\in F_\mu^{-1}(\Gamma_{\lambda,M})\subset \C^+.
\end{align*}
Since $\varphi_\mu$ has an analytic continuation to $\C^+$ (denoted by the same symbol $\varphi_\mu$), we obtain
\begin{align*}
F_\mu(z)+\varphi_\mu(F_\mu(z))=z, \qquad z\in\C^+
\end{align*}
by the identity theorem. 

(ii) Take $w\in F_\mu(\C^+)$ and $z\in \C^+$ such that $w=F_\mu(z)$. Then,
\begin{align*}
w+\varphi_\mu(w)=F_\mu(z)+\varphi_\mu(F_\mu(z))=z\in\C^+,
\end{align*}
and therefore $\Im (w+\varphi_\mu(w))>0$. Thus, $F_\mu(\C^+)\subset \Omega_\mu$. 

Recall that $F_\mu(z+\varphi_\mu(z))=z$ for all $z\in F_\mu(\C^+)$. The identity extends by analytic continuation to $\Omega_\mu$. Finally, for all $z\in\Omega_\mu$, we have that $z+\varphi_\mu(z)\in\C^+$, and therefore $z=F_\mu(z+\varphi_\mu(z))\in F_\mu(\C^+)$. This means that $\Omega_\mu\subset F_\mu(\C^+)$.
\end{proof}

\begin{proof}[Proof of Theorem \ref{atom}]
Assume that $\mu$ has an atom $a\in\R$ with the mass $\beta=\mu(\{a\})>0$. We obtain
\begin{align*}
|G_\mu(a+iy)|\ge |\Im G_\mu(a+iy)|=\left|\int_\R \frac{-y}{(a-t)^2+y^2}d\mu(t) \right|\ge \frac{\beta}{y}.
\end{align*}
We then have that
\begin{align*}
|\Re F_\mu(a+iy)|\le |F_\mu(a+iy)|\le \frac{y}{\beta}\le \frac{\Im F_\mu(a+iy)}{\beta},
\end{align*}
where the last inequality holds by \cite[Corollary 5.3]{BeVo1993}. Therefore, $F_\mu(a+iy)\in \Gamma_{1/\beta}$ for all $y>0$. Hence, the curve $C:=F_\mu(a+i (0,\infty))$ approaches zero nontangentially. Moreover, if we define $u_\mu(z):=z+\varphi_\mu(z)$, the function $u$ maps $C$ to $\C^+$, because $C\subset F_\mu(\C^+)=\Omega_\mu$ holds by Lemma \ref{lem:FQIDatom}(ii). Finally, Lemma \ref{lem:FQIDatom}(i) implies that
\begin{align*}
\lim_{z\rightarrow 0, z\in C}u_\mu(z)=\lim_{y\rightarrow 0} (F_\mu(a+iy)+\varphi(F_\mu(a+iy)))=\lim_{y\rightarrow 0} (a+iy)=a.
\end{align*}
If the number of atoms of $\mu$ is greater than one (denote distinct atoms of $\mu$ by $a_1,a_2,\cdots, a_n\in\R$), then the above proof implies that there is a curve $C_i$ in $\C^+$ that approaches zero nontangentially to $\R$, such that
\begin{align*}
\lim_{z\rightarrow 0, z\in C_i}u_\mu(z)=a_i,\qquad i=1,2,\cdots, n.
\end{align*}
By \cite[Lemma 5.11]{BeVo1993}, the nontangential limit of $u_\mu(z)$ at $0$ is $a_i$ for $i=1,2,\cdots, n$. As the limit is uniquely determined by $\mu$, we have $a_1=a_2=\cdots =a_n$. Therefore, the measure $\mu$ has at most one atom.
\end{proof}

\begin{rem}\label{Bernoulli}
For any $a,b\in\R$ with $a\not=b$ and any $p\in(0,1)$, Theorem \ref{atom} implies that the Bernoulli distribution $p\delta_{a}+(1-p)\delta_{b}$ is not FQID. However, $p\delta_a+(1-p)\delta_b$ is QID for $p\neq 1/2$ (see \cite{LPS}). Therefore, quasi-infinite divisibility does not exhibit complete similarity between the classical and free probability theories.
\end{rem}

\subsection{Convergence of FQID distributions}

We say that a sequence $\{\mu_n\}_n\subset \P$ converges weakly to $\mu\in\P$ if, for all $f\in C_b(\R)$,
\begin{align}\label{W}
\lim_{n\rightarrow\infty}\int_\R f(x)\mu_n(dx)=\int_\R f(x)\mu (dx),
\end{align}
where $C_b(\R)$ is the set of all bounded continuous functions on $\R$. In this case, we write $\mu_n\xrightarrow{w}\mu$. The weak convergence of probability measures can be characterized by Voiculescu transforms (see \cite{BeVo1993,BP96}).

\begin{lem}\label{lem:wconv}
Let $\{\mu_n\}_{n}\subset\P$. Then, the following conditions are equivalent.
\begin{enumerate}[\rm (i)]
\item There exists some $\mu\in\P$ such that $\mu_n\xrightarrow{w}\mu$ as $n\rightarrow\infty$.
\item There exist positive numbers $\lambda$ and $M$ and a function $\varphi$ such that all functions $\varphi_{\mu_n}$ and $\varphi$ are defined on $\Gamma_{\lambda,M}$, and such that
\begin{enumerate}[\rm (a)]
\item $\lim_{n\rightarrow\infty} \varphi_{\mu_n}(z)=\varphi(z)$ uniformly on compact subsets of $\Gamma_{\lambda,M}$;
\item $\sup_{n\in\N}\left|\frac{\varphi_{\mu_n}(z)}{z}\right|\rightarrow 0$ as $|z|\rightarrow\infty$ with $z\in \Gamma_{\lambda,M}$.
\end{enumerate}
\item There exist positive numbers $\lambda$ and $M$ such that all functions $\varphi_{\mu_n}$ are defined on $\Gamma_{\lambda,M}$, and such that
\begin{enumerate}[\rm (a)]
\item $\lim_{n\rightarrow\infty} \varphi_{\mu_n}(iy)$ exists for all $y\ge M$;
\item $\sup_{n\in\N}\left|\frac{\varphi_{\mu_n}(iy)}{y}\right|\rightarrow 0$ as $y\rightarrow\infty$.
\end{enumerate}
\end{enumerate}
In this case, we have $\varphi=\varphi_\mu$ on $\Gamma_{\lambda,M}$.
\end{lem}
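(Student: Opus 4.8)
The plan is to reduce everything to the analytic behaviour of the reciprocal Cauchy transforms $F_{\mu_n}=1/G_{\mu_n}$ on $\C^+$ and to exploit two classical inputs: first, that weak convergence of probability measures is equivalent to locally uniform convergence of their Cauchy transforms $G_{\mu_n}$ on $\C^+$, provided the prospective limit is again a probability measure; and second, a tightness criterion phrased through the imaginary axis. The two analytic engines are Vitali's theorem applied to a normal family and the mapping estimate $\Im F_\mu(z)\ge\Im z$ from \cite[Corollary 5.3]{BeVo1993}, which forces $\Im\varphi_\mu(z)\le 0$ on $\Gamma_{\lambda,M}$: writing $z=F_\mu(w)$ gives $\Im z=\Im F_\mu(w)\ge\Im w=\Im F_\mu^{-1}(z)$, so $\varphi_\mu(\Gamma_{\lambda,M})\subset\C^-\cup\R$.

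The implication (ii) $\Rightarrow$ (iii) is immediate by restricting to $z=iy$. For (i) $\Rightarrow$ (ii), I would first note that weak convergence makes $\{\mu_n\}$ tight, and then invoke the tightness criterion: a family is tight if and only if there is a common truncated cone $\Gamma_{\lambda,M}$ on which every $\varphi_{\mu_n}$ is defined and $\sup_n|\varphi_{\mu_n}(iy)/y|\to 0$ as $y\to\infty$; this is exactly (b) and secures the common domain. Locally uniform convergence of $F_{\mu_n}$ to $F_\mu$, coming from weak convergence, then transfers, after inverting on a slightly smaller cone, to locally uniform convergence of $F_{\mu_n}^{-1}$ and hence of $\varphi_{\mu_n}=F_{\mu_n}^{-1}-\mathrm{id}$, giving (a) with $\varphi=\varphi_\mu$.

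The substantial implication is (iii) $\Rightarrow$ (i). Since each $\varphi_{\mu_n}$ maps $\Gamma_{\lambda,M}$ into $\C^-\cup\R$, the family $\{\varphi_{\mu_n}\}$ is normal by Montel's theorem (compose with a M\"obius map onto the disk). Condition (iii)(a) gives convergence on the ray $\{iy:y\ge M\}$, whose points accumulate inside $\Gamma_{\lambda,M}$, so Vitali's theorem upgrades this to locally uniform convergence of $\varphi_{\mu_n}$ on all of $\Gamma_{\lambda,M}$ to an analytic limit $\varphi$. Condition (iii)(b) yields tightness, hence $\{\mu_n\}$ has weakly convergent subsequences with probability-measure limits. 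For any such limit $\nu$, the genuinely easy direction (weak convergence $\Rightarrow$ locally uniform convergence of $G$, then of $F$, then of $F^{-1}$) identifies $\varphi_\nu$ with $\varphi$ on a cone; since a probability measure is determined by its Voiculescu transform there (through $F$, $G$, and Stieltjes inversion, using Proposition \ref{exist}), all subsequential limits coincide, and therefore $\mu_n\xrightarrow{w}\mu$ with $\varphi=\varphi_\mu$.

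The main obstacle is the tightness criterion together with the propagation of the smallness condition from the imaginary axis to the whole cone. Concretely, one must show that $\sup_n|\varphi_{\mu_n}(iy)/y|\to 0$ both characterizes tightness and controls $\varphi_{\mu_n}(z)/z$ throughout $\Gamma_{\lambda,M}$; this rests on the quantitative identity $-y\,\Im G_\mu(iy)=\int_\R \frac{y^2}{x^2+y^2}\,\mu(dx)\to 1$, whose uniform (in $n$) version is equivalent to tightness, and on the fact that $F_\mu$ and $F_\mu^{-1}$ are both asymptotic to the identity near infinity, so that the $F$-version and the $\varphi$-version of smallness are comparable. Making the domain $\Gamma_{\lambda,M}$ uniform in $n$ and controlling the inversion of $F_{\mu_n}$ on it is the delicate bookkeeping underlying the clean statement, and is where I expect most of the work to lie.
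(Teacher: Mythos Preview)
The paper does not actually prove this lemma: it is stated as a known result, with the attribution ``see \cite{BeVo1993,BP96}'' immediately preceding it, and no proof is given. So there is no ``paper's own proof'' to compare against.

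Your outline is essentially a faithful reconstruction of the arguments in those two references. The equivalence (i)$\Leftrightarrow$(ii) is \cite[Proposition~5.7]{BeVo1993}, and the strengthening to (iii) is the content of \cite{BP96}. Your identification of the main ingredients---the estimate $\Im F_\mu(z)\ge\Im z$ forcing $\varphi_{\mu_n}(\Gamma_{\lambda,M})\subset\C^-\cup\R$, normality via Montel, propagation from the imaginary ray to the full cone via Vitali, and the tightness criterion phrased through $-y\,\Im G_\mu(iy)=\int_\R\frac{y^2}{x^2+y^2}\,\mu(dx)$---matches the literature. The one place to be slightly more careful is the passage from locally uniform convergence of $F_{\mu_n}$ to that of $F_{\mu_n}^{-1}$: in \cite{BeVo1993} this is handled not by a generic inversion argument but by first establishing, from tightness, a \emph{uniform} cone $\Gamma_{\lambda,M}$ on which all $F_{\mu_n}$ are simultaneously invertible (their Proposition~5.6), after which Hurwitz-type reasoning on the univalent maps gives the inverse convergence. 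Your final paragraph correctly flags this bookkeeping as the delicate point.
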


A sequence $\{\mu_n\}_n$ of finite (Borel) measures is said to be {\it uniformly bounded} if 
\begin{align}\label{unifbdd}
\sup_{n\in\N} \mu_n(\R)<\infty.
\end{align}
A sequence $\{\mu_n\}_n$ of finite measures is said to be {\it tight} if, for any $\epsilon>0$, there exists a positive number $T>0$ such that
\begin{align}\label{tightness}
\sup_{n\in \N} \mu_n(\R\setminus [-T,T])\le \epsilon.
\end{align}

The above three notions relating to weak convergence, uniform boundedness, and tightness are also defined for a sequence of finite signed measures on $\R$. If $\{\mu_n\}_n$ is a sequence of finite signed measures on $\R$, then we say that \textit{$\mu_n$ converges weakly to some finite signed measure $\mu$} if the measures $\mu_n$ and $\mu$ satisfy condition \eqref{W}, and the family $\{\mu_n\}_n$ is said to be \textit{uniformly bounded} and \textit{tight} if its total variation $|\mu_n|$ satisfies conditions \eqref{unifbdd} and \eqref{tightness}, respectively.

The following characterization of uniform boundedness and tightness is useful.

\begin{lem}\label{lem:tightunif}
We have the following properties.
\begin{enumerate}[\rm (i)]
\item Let $\{\mu_n\}_n$ be a sequence of finite (signed) measures. Then, the sequence $\{\mu_n\}_n$ is tight if and only if every subsequence of $\{\mu_n\}_n$ has a weakly convergent subsequence. 
\item A weakly convergent sequence of finite (signed) measures is uniformly bounded and tight (see \cite[Theorem 8.6.2]{Bo}).
\end{enumerate}
\end{lem}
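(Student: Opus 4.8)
The plan is to take part (ii) as given from \cite[Theorem 8.6.2]{Bo} and to establish the equivalence in part (i) by reducing the signed case, through the Hahn-Jordan decomposition, to the classical Prokhorov compactness theorem for nonnegative finite measures. The two implications in (i) are then handled independently: the implication ``subsequential weak compactness $\Rightarrow$ tight'' is a short contrapositive argument using (ii), whereas ``tight $\Rightarrow$ subsequential weak compactness'' rests on Prokhorov's theorem applied to the positive and negative parts.

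For the first implication I would argue by contraposition. If $\{\mu_n\}_n$ is not tight, there is $\epsilon>0$ such that for every $r>0$ some index $n$ satisfies $|\mu_n|(\R\setminus[-r,r])>\epsilon$. Because each finite measure $|\mu_n|$ has vanishing tails, for fixed $n$ the quantity $|\mu_n|(\R\setminus[-r,r])$ tends to $0$ as $r\to\infty$; hence the indices witnessing the above inequality must grow unboundedly with $r$, and one can select a subsequence $\{\mu_{n_j}\}_j$ together with radii $r_j\to\infty$ satisfying $|\mu_{n_j}|(\R\setminus[-r_j,r_j])>\epsilon$. This subsequence has no weakly convergent subsequence: any such sub-subsequence would be tight by (ii), yet for each fixed $T>0$ and all large $i$ (so that $r_{j_i}>T$) one has $|\mu_{n_{j_i}}|(\R\setminus[-T,T])\ge|\mu_{n_{j_i}}|(\R\setminus[-r_{j_i},r_{j_i}])>\epsilon$, contradicting tightness. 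This is the only place where (ii) is used.

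For the converse I would pass to the Hahn-Jordan decomposition $\mu_n=\mu_n^{+}-\mu_n^{-}$, where $\mu_n^{+},\mu_n^{-}\le|\mu_n|$. Tightness of $\{|\mu_n|\}_n$ transfers immediately to $\{\mu_n^{+}\}_n$ and $\{\mu_n^{-}\}_n$, and, these being bounded in total variation as well, Prokhorov's theorem (see \cite{Bo}) makes each family relatively sequentially weakly compact. Given any subsequence of $\{\mu_n\}_n$, I would extract a sub-subsequence along which $\mu_n^{+}$ converges weakly to some finite nonnegative $\mu^{+}$, and then a further sub-subsequence along which $\mu_n^{-}$ converges weakly to some $\mu^{-}$; along this nested subsequence $\mu_n$ converges weakly to the finite signed measure $\mu^{+}-\mu^{-}$. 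Here it is harmless that $\mu^{+}-\mu^{-}$ may not be the Hahn-Jordan decomposition of the limit, since only weak convergence is asserted.

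The step I expect to be the most delicate is the interplay of tightness with total-variation boundedness. The passage to $\mu_n^{\pm}$ yields relative compactness only when these parts are uniformly bounded, i.e. $\sup_n|\mu_n|(\R)<\infty$, and tightness alone does not deliver this, as the sequence $\mu_n=n\delta_0$ (tight, but with no weakly convergent subsequence) shows. Thus the forward implication in (i) should be read in conjunction with total-variation boundedness of $\{\mu_n\}_n$ --- precisely the extra datum that (ii) attaches to a weakly convergent sequence --- and keeping track of where this boundedness enters is the main point requiring care.
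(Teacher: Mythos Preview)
The paper does not actually prove this lemma: it is recorded as a standard fact, with only the reference to \cite[Theorem~8.6.2]{Bo} for part~(ii), and no argument is given for part~(i). So there is nothing to compare your approach against.

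Your argument is the natural one and is correct in its essentials. The reduction to the Hahn--Jordan parts and the diagonal extraction via Prokhorov's theorem is exactly how one handles the signed case, and your contrapositive for the other implication, leaning on~(ii), is clean. You are also right to flag the delicate point: tightness in the paper's sense (a tail condition on $|\mu_n|$) does \emph{not} by itself force uniform boundedness, and your example $\mu_n=n\delta_0$ shows that the forward implication in~(i) fails without the additional hypothesis $\sup_n|\mu_n|(\R)<\infty$. The statement of the lemma is therefore slightly imprecise as written. That said, every place the paper actually invokes Lemma~\ref{lem:tightunif}(i) --- in the proof of Lemma~\ref{lem:F} and implicitly in Theorem~\ref{thm:F} --- the sequences in question are either probability measures or are explicitly assumed to be both tight and uniformly bounded, so the applications go through unaffected. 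Your reading of~(i) ``in conjunction with total-variation boundedness'' is exactly the intended one.
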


The class ${\rm ID}(\boxplus)$ is closed with respect to the weak topology. Furthermore, the (weak) convergence of FID distributions is equivalent to that of free characteristic pairs. 

\begin{lem}\label{FIDeq}
We have the following properties.
\begin{enumerate}[\rm (1)]
\item The class ${\rm ID}(\boxplus)$ is weakly closed, that is, if $\{\mu_n\}_n$ is a sequence of FID distributions, $\mu\in\P$, and $\mu_n\xrightarrow{w}\mu$ as $n\rightarrow\infty$, then $\mu$ is also FID.
\item Let $\{\mu_n\}_n$ be a sequence of FID distributions $\mu_n$ with the free characteristic pair $(b_n,\tau_n)$. The following conditions are equivalent.
\begin{enumerate}[\rm (i)]
\item There exists some $\mu\in\P$ such that $\mu_n\xrightarrow{w}\mu$ as $n\rightarrow\infty$; 
\item There exist $b\in\R$ and a finite measure $\tau$ on $\R$ such that $b_n \rightarrow b$ and $\tau_n\xrightarrow{w}\tau$ as $n\rightarrow\infty$. 
\end{enumerate}
In this case, $\mu$ is an FID distribution with the free characteristic pair $(b,\tau)$.
\end{enumerate}
\end{lem}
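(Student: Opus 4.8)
The plan is to deduce both parts from the Voiculescu-transform criterion for weak convergence (Lemma \ref{lem:wconv}), the Pick--Nevanlinna representation of FID Voiculescu transforms, and the uniqueness of the free characteristic pair. The computational workhorse will be the elementary decomposition of the Nevanlinna kernel
\[
\frac{1+xz}{z-x}=-z+\frac{1+z^2}{z-x},\qquad z\in\C^+,
\]
in which, for fixed $z$, the function $x\mapsto (1+z^2)/(z-x)$ lies in $C_0(\R)$ while the remaining term only sees the total mass. Setting $z=iy$ and splitting into real and imaginary parts yields the two identities $\Im h_{iy}(x)=-y(1+x^2)/(y^2+x^2)$ and $\Re h_{iy}(x)=x(y^2-1)/(y^2+x^2)$ for $h_z(x):=(1+xz)/(z-x)$; these drive every estimate below.

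For part (1) I would argue by normal families. Each $\mu_n$ being FID, Proposition \ref{prop:Voiculescu transform} gives that $-\varphi_{\mu_n}$ extends to a Pick function on $\C^+$, and the weak convergence $\mu_n\xrightarrow{w}\mu$ forces, via Lemma \ref{lem:wconv}, locally uniform convergence $\varphi_{\mu_n}\to\varphi_\mu$ on some $\Gamma_{\lambda,M}$. Since Pick functions form a normal family whose only possible degeneration is the constant $\infty$, and the limit here is finite on an open set, Vitali's theorem upgrades this to locally uniform convergence on all of $\C^+$; the limit extends $\varphi_\mu$ and again takes values in $\C^-\cup\R$, so $\mu$ is FID by Proposition \ref{prop:Voiculescu transform}. (Alternatively, part (1) falls out of the (i)$\Rightarrow$(ii) half of part (2) below.)

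For the direction (ii)$\Rightarrow$(i) of part (2), let $\mu$ be the FID distribution with pair $(b,\tau)$. The decomposition gives $\varphi_{\mu_n}(z)=b_n-z\,\tau_n(\R)+(1+z^2)\int (z-x)^{-1}\,\tau_n(dx)$, and since $\tau_n\xrightarrow{w}\tau$ yields both $\tau_n(\R)\to\tau(\R)$ and $\int f\,d\tau_n\to\int f\,d\tau$ for $f\in C_0(\R)$, I get pointwise convergence $\varphi_{\mu_n}\to\varphi_\mu$, which is hypothesis (iii)(a) of Lemma \ref{lem:wconv}. For (iii)(b) I would expand $\varphi_{\mu_n}(iy)/(iy)$ and use the tightness of $\{\tau_n\}$ (Lemma \ref{lem:tightunif}(ii)) to prove $\sup_n|\varphi_{\mu_n}(iy)/y|\to0$ as $y\to\infty$; the subtle point is that the apparent leading term $-\tau_n(\R)$ cancels against the expansion of the integral, so that only a remainder controlled by $\sup_n\tau_n(\{|x|>T\})$ survives. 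Lemma \ref{lem:wconv} then delivers $\mu_n\xrightarrow{w}\mu$, and uniqueness of the free characteristic pair identifies the limit's pair as $(b,\tau)$.

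The harder direction is (i)$\Rightarrow$(ii). Assuming $\mu_n\xrightarrow{w}\mu$, Lemma \ref{lem:wconv} gives convergence of $\varphi_{\mu_n}$ on $\Gamma_{\lambda,M}$ together with the decay (iii)(b). Uniform boundedness of $\{\tau_n\}$ is read off from $\tau_n(\R)\le y\,|\Im\varphi_{\mu_n}(iy)|$ for $y\ge1$ (from the imaginary-part identity), and boundedness of $\{b_n\}$ follows likewise from the real-part identity once masses are bounded. The crux is tightness: from the same identity I would derive, for $y>1$, the bound $\tau_n(\{|x|>T\})\le \frac{y^2+T^2}{(1+T^2)\,y}\,|\Im\varphi_{\mu_n}(iy)|$, then invoke (iii)(b) to fix $y$ large with $\sup_n|\varphi_{\mu_n}(iy)|/y<\epsilon$ and let $T\to\infty$, giving $\limsup_T\sup_n\tau_n(\{|x|>T\})\le\epsilon$. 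This tightness step is the main obstacle, because uniform boundedness alone allows mass to escape to infinity, and it is precisely the decay condition (iii)(b) that rules this out. With $\{(b_n,\tau_n)\}$ now relatively compact (Lemma \ref{lem:tightunif}(i)), I would conclude by the standard subsequence argument: any convergent subsequence $(b_{n_k},\tau_{n_k})\to(b',\tau')$ produces, through the already-established direction (ii)$\Rightarrow$(i), an FID limit with pair $(b',\tau')$ that must equal $\mu$; uniqueness of the free characteristic pair makes $(b',\tau')$ independent of the subsequence, so the whole sequence converges and $\mu$ is FID with pair $(b,\tau)$.
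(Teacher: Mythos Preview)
The paper does not supply a proof of this lemma: it is stated as background, essentially quoted from \cite{BeVo1993} and \cite{BNT06} (indeed, the paper's next result, Theorem~\ref{prop:converges}, is explicitly described as a modification of \cite[Theorem~5.13]{BNT06}). So there is no ``paper's own proof'' to compare against; what you have written is a correct reconstruction of the standard argument.

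Your sketch is sound in all essential points. The kernel identity $\Im h_{iy}(x)=-y(1+x^2)/(y^2+x^2)$ is exactly the right tool: for $y>1$ this function is increasing in $x^2$, which yields both the mass bound $\tau_n(\R)\le y\,|\Im\varphi_{\mu_n}(iy)|$ (from the lower bound $1/y^2$) and the tightness bound $\tau_n(\{|x|>T\})\le \frac{y^2+T^2}{1+T^2}\cdot\frac{|\Im\varphi_{\mu_n}(iy)|}{y}$, after which Lemma~\ref{lem:wconv}(iii)(b) does the work you claim. The subsequence-plus-uniqueness closing argument is standard, and the normal-families proof of part~(1) is fine (alternatively, as you note, it drops out of (i)$\Rightarrow$(ii)). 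The direction (ii)$\Rightarrow$(i) is proved in the paper in the more general FQID setting (Theorem~\ref{prop:converges}); your version, specialised to nonnegative $\tau_n$, is the same calculation.
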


The following theorem extends the implication $\rm (ii)\Rightarrow (i)$ in Lemma \ref{FIDeq}(2) to FQID distributions. The proof is a simple modification of \cite[Theorem 5.13(ii) $\Rightarrow$ (i)]{BNT06} to $|\tau_n|$. However, we include the proof for readers' convenience.

\begin{thm}\label{prop:converges}
Let $\{\mu_n\}_n$ be a sequence of FQID distributions with free characteristic pairs $(b_n,\tau_n)$. If there exist $b\in \R$ and a finite signed measure $\tau$ on $\R$ such that $b_n\rightarrow b$ and $\tau_n\xrightarrow{w} \tau$, then there exists some $\mu\in\P$ such that $\mu_n\xrightarrow{w}\mu$ as $n\rightarrow\infty$. Moreover, $\mu$ is an FQID distribution with the free characteristic pair $(b,\tau)$.
\end{thm}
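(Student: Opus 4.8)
The plan is to deduce the weak convergence $\mu_n \xrightarrow{w} \mu$ from the convergence of the free characteristic pairs by applying the criterion of Lemma \ref{lem:wconv} in the direction $\rm (iii)\Rightarrow (i)$, and then to read off the limiting free characteristic pair from the limit of the Voiculescu transforms. Since each $\mu_n$ is FQID, Proposition \ref{prop:nonFQID} guarantees that every $\varphi_{\mu_n}$ admits an analytic continuation to all of $\C^+$, so all of them are simultaneously defined on any fixed cone $\Gamma_{\lambda,M}$; thus the two things I must check are condition $\rm (iii)(a)$ (existence of $\lim_n \varphi_{\mu_n}(iy)$) and condition $\rm (iii)(b)$ (the uniform smallness $\sup_n |\varphi_{\mu_n}(iy)/y|\to 0$).

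First I would establish pointwise convergence. For each fixed $z\in\C^+$ the kernel $x\mapsto (1+xz)/(z-x)$ is bounded and continuous on $\R$ (its only pole sits at $x=z\in\C^+$, and it tends to $-z$ as $|x|\to\infty$). Splitting it into real and imaginary parts and using the weak convergence $\tau_n\xrightarrow{w}\tau$ together with $b_n\to b$, I obtain
\[
\varphi_{\mu_n}(z)\longrightarrow \varphi(z):=b+\int_\R\frac{1+xz}{z-x}\,\tau(dx),\qquad z\in\C^+,
\]
which in particular gives condition $\rm (iii)(a)$.

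The main work, and the point at which the ``modification to $|\tau_n|$'' enters, is condition $\rm (iii)(b)$. By Lemma \ref{lem:tightunif}(ii) the weakly convergent sequence $\{\tau_n\}$ is uniformly bounded and tight, so $C:=\sup_n|\tau_n|(\R)<\infty$ and, given $\epsilon>0$, there is $T>0$ with $\sup_n|\tau_n|(\R\setminus[-T,T])\le\epsilon$. For $z=iy$ with $y>1$ one computes $\left|(1+ixy)/(iy-x)\right|^2=(1+x^2y^2)/(x^2+y^2)\le y^2$ for all $x$ (this is just $1\le y^4$), giving the global bound $y^{-1}\left|(1+ixy)/(iy-x)\right|\le 1$; for $|x|\le T$ the cruder estimates $\sqrt{1+x^2y^2}\le 1+Ty$ and $\sqrt{x^2+y^2}\ge y$ yield the sharper bound $y^{-1}\left|(1+ixy)/(iy-x)\right|\le y^{-2}+Ty^{-1}$. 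Splitting the integral at $|x|=T$ then gives
\[
\frac{1}{y}\int_\R\left|\frac{1+ixy}{iy-x}\right|\,|\tau_n|(dx)\le C\left(\frac{1}{y^2}+\frac{T}{y}\right)+\epsilon ,
\]
so that $\limsup_{y\to\infty}\sup_n y^{-1}|\varphi_{\mu_n}(iy)|\le\epsilon$; letting $\epsilon\downarrow 0$ (and noting $|b_n|/y\to 0$ uniformly since $\{b_n\}$ is bounded) establishes $\rm (iii)(b)$. I expect this uniform estimate to be the only genuine obstacle: it is essential here that tightness of the \emph{total variations} $|\tau_n|$, rather than of the $\tau_n$ themselves, controls the mass near infinity where the kernel fails to decay.

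With both conditions verified, Lemma \ref{lem:wconv} produces $\mu\in\P$ with $\mu_n\xrightarrow{w}\mu$ and $\varphi_\mu=\varphi$ on $\Gamma_{\lambda,M}$. Since $\varphi$ is analytic on all of $\C^+$ (being an integral of an analytic kernel against the finite signed measure $\tau$), it furnishes the analytic continuation of $\varphi_\mu$ to $\C^+$, and there $\varphi_\mu(z)=b+\int_\R (1+xz)/(z-x)\,\tau(dx)$. Hence $\mu$ is FQID, and by the uniqueness in Proposition \ref{uniquegenerating} its free characteristic pair is exactly $(b,\tau)$, completing the proof.
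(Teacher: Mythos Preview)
Your proposal is correct and follows essentially the same approach as the paper's proof: verify Lemma~\ref{lem:wconv}(iii) by using boundedness and continuity of the kernel for part~(a), and the uniform boundedness and tightness of $|\tau_n|$ (from Lemma~\ref{lem:tightunif}(ii)) together with a near/far splitting for part~(b). The only differences are cosmetic---your global bound on $y^{-1}\bigl|(1+ixy)/(iy-x)\bigr|$ is~$1$ versus the paper's~$2$, and your local bound $y^{-2}+Ty^{-1}$ replaces the paper's $(N+1)/y$---and you make explicit the analytic-continuation step extending $\varphi_\mu=\varphi$ from $\Gamma_{\lambda,M}$ to all of $\C^+$, which the paper leaves implicit.
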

\begin{proof}
We verify Lemma \ref{lem:wconv}(iii) to prove this theorem. As $\mu_n$ is FQID, its Voiculescu transform $\varphi_{\mu_n}$ is defined on $\C^+$. For any $y>0$, the map $t\mapsto \frac{1+tiy}{iy-t}$ is continuous and bounded with respect to $t\in\R$.  Hence, the convergence of $\gamma_n$ and the weak convergence of $\tau_n$ imply that
\begin{align*}
\varphi_{\mu_n}(iy)=\gamma_n+\int_\R\frac{1+tiy}{iy-t}\tau_n(dt)\xrightarrow{n\rightarrow\infty} \gamma+\int_\R \frac{1+tiy}{iy-t}\tau(dt), \qquad y>0.
\end{align*}
Therefore, $\lim_{n\rightarrow\infty}\varphi_{\mu_n}(iy)$ exists for all $y>0$. Condition (a) in Lemma \ref{lem:wconv}(iii) holds.

For any $n\in\N$ and $y>0$, we have that
\begin{align*}
\frac{\varphi_{\mu_n}(iy)}{y}=\frac{\gamma_n}{y}+\int_\R \frac{1+tiy}{y(iy-t)}\tau_n(dt).
\end{align*}
Since $\{\gamma_n\}_n$ is a convergent sequence, it is bounded; therefore, it suffices to show that
\begin{align*}
\sup_{n\in\N}\left| \int_\R \frac{1+tiy}{y(iy-t)}\tau_n(dt)\right|\rightarrow 0, \qquad y\rightarrow\infty
\end{align*}
to check condition (b) in Lemma \ref{lem:wconv}(iii). For any $t\in \R$ and $y>0$, we have that
\begin{align*}
\left| \frac{1+tiy}{y(iy-t)}\right| \le \frac{1}{y(y^2+t^2)^{1/2}}+\frac{|t|}{(y^2+t^2)^{1/2}}.
\end{align*}
For any $y\ge 1$, we also have
\begin{align*}
\sup_{t\in\R}\left| \frac{1+tiy}{y(iy-t)}\right|\le 2.
\end{align*}
For any $N\in\N$ and $y\ge 1$, we have that
\begin{align*}
\sup_{t\in[-N,N]}\left| \frac{1+tiy}{y(iy-t)}\right|\le \frac{N+1}{y}.
\end{align*}
Therefore, for any $N\in\N$ and $y\ge 1$, we obtain
\begin{align*}
\sup_{n\in \N}\left| \int_\R \frac{1+tiy}{y(iy-t)}\tau_n(dt)\right| &\le \sup_{n\in\N} \int_\R \left| \frac{1+tiy}{y(iy-t)}\right| |\tau_n|(dt)\\
&\le \frac{N+1}{y} \sup_{n\in \N} |\tau_n|([-N,N])+2\sup_{n\in\N}|\tau_n|(\R\setminus[-N,N])\\
&\le \frac{N+1}{y} \sup_{n\in \N} |\tau_n|(\R)+2\sup_{n\in\N}|\tau_n|(\R\setminus[-N,N]).
\end{align*}

As $\tau_n\xrightarrow{w}\tau$, the sequence $\{|\tau_n|\}_n$ is uniformly bounded and tight by Lemma \ref{lem:tightunif}(ii). Consider an arbitrary $\epsilon>0$. Since $\{|\tau_n|\}_n$ is tight, there exists some $N\in\N$ such that $\sup_{n\in\N}|\tau_n|(\R\setminus[-N,N])< \epsilon/4$.  As $\sup_{n\in\N}|\tau_n|(\R)<\infty$, for any $N\in\N$, there is some $y_0\ge 1$ such that 
\begin{align*}
\frac{N+1}{y}\sup_{n\in \N}|\tau_n|(\R)<\frac{\epsilon}{2}, \qquad y\ge y_0.
\end{align*}
Therefore, if $y\ge y_0$, then
\begin{align*}
\sup_{n\in \N}\left| \int_\R \frac{1+tiy}{y(iy-t)}\tau_n(dt)\right|&\le \frac{N+1}{y} \sup_{n\in \N} |\tau_n|(\R)+2\sup_{n\in\N}|\tau_n|(\R\setminus[-N,N])<\epsilon.
\end{align*}

Finally, Lemma \ref{lem:wconv}(iii) holds. Therefore, there is a probability measure $\mu$ on $\R$ such that $\mu_n\xrightarrow{w} \mu$ as $n\rightarrow\infty$, and
\begin{align*}
\varphi_\mu(z)=b+\int_\R \frac{1+tz}{z-t}\tau(dt), \qquad z\in \C^+.
\end{align*}
Hence, $\mu$ is an FQID distribution with the free characteristic pair $(b,\tau)$.
\end{proof}

\begin{rem}
Whether the converse implication of Theorem \ref{prop:converges} is true remains an open question. More precisely, we wish to know whether, if $\mu_n\xrightarrow{w}\mu$, then $b_n\rightarrow b$ and $\tau_n\rightarrow\tau$ as $n\rightarrow\infty$. According to the proof in \cite{BNT06}, if $\mu_n$ is an FID distribution with the free characteristic pair $(b_n,\tau_n)$, then the positivity of $\tau_n$ can be used to show the implication (i) $\Rightarrow$ (ii) in  Lemma \ref{FIDeq}(2). However, if $\mu_n$ is FQID, then $\tau_n$ is a signed measure, and we cannot apply the techniques used in \cite{BNT06}.
\end{rem}

One can obtain the converse of Theorem \ref{prop:converges} under some additional assumptions regarding free quasi-generating pairs.

\begin{thm}\label{thm:F}
Let $\{\mu_n\}_n$ be a sequence of FQID distributions $\mu_n$ with free characteristic pairs $(b_n,\tau_n)$ and let $\mu$ be an FQID distribution with the free characteristic pair $(b,\tau)$. Suppose that $\tau_n=\tau_n^+-\tau_n^-$ for each $n\in\N$, where $\tau_n^{\pm}$ are the positive and negative parts of $\tau$. If $\mu_n\xrightarrow{w}\mu$ as $n\rightarrow\infty$, and $\{\tau_n^-\}_n$ is tight and uniformly bounded, then $b_n\rightarrow b$ and $\tau_n\xrightarrow{w}\tau$ as $n\rightarrow\infty$. 
\end{thm}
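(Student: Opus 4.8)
The plan is to reduce the statement to the corresponding result for FID distributions, namely Lemma~\ref{FIDeq}(2), by feeding the negative parts $\tau_n^-$ back into the picture, and to run the entire argument along subsequences so as to exploit the only compactness we are given, the tightness and uniform boundedness of $\{\tau_n^-\}_n$. First I would note that it suffices to prove that every subsequence of $\{(b_n,\tau_n)\}_n$ admits a further subsequence along which $b_{n_k}\to b$ and $\tau_{n_k}\xrightarrow{w}\tau$: since the candidate limit $(b,\tau)$ is the same for every sub-subsequence, the subsequence principle (valid in any topological space, hence in particular for weak convergence of finite signed measures) then yields $b_n\to b$ and $\tau_n\xrightarrow{w}\tau$. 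So fix a subsequence. Because $\{\tau_n^-\}_n$ is tight and uniformly bounded, Lemma~\ref{lem:tightunif}(i) lets me pass to a further subsequence, still written $\{n_k\}$, along which $\tau_{n_k}^-\xrightarrow{w}\kappa$ for some finite nonnegative measure $\kappa$.

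Next I would transport the problem into the FID class. Let $\sigma_{n_k}$ be the FID distribution with free characteristic pair $(0,\tau_{n_k}^-)$, so that $\varphi_{\sigma_{n_k}}(z)=\int_\R\frac{1+xz}{z-x}\tau_{n_k}^-(dx)$, and set $\pi_{n_k}:=\mu_{n_k}\boxplus\sigma_{n_k}$. Adding Voiculescu transforms and using $\tau_{n_k}=\tau_{n_k}^+-\tau_{n_k}^-$ gives $\varphi_{\pi_{n_k}}(z)=b_{n_k}+\int_\R\frac{1+xz}{z-x}\tau_{n_k}^+(dx)$, so each $\pi_{n_k}$ is FID with free characteristic pair $(b_{n_k},\tau_{n_k}^+)$. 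By Lemma~\ref{FIDeq}(2), $(0,\tau_{n_k}^-)\to(0,\kappa)$ forces $\sigma_{n_k}\xrightarrow{w}\sigma$, where $\sigma$ is FID with pair $(0,\kappa)$. Since $\mu_{n_k}\xrightarrow{w}\mu$ as well, I would invoke the additivity and convergence properties of the Voiculescu transform encoded in Lemma~\ref{lem:wconv}: applying (i)$\Rightarrow$(ii) to $\{\mu_{n_k}\}$ and to $\{\sigma_{n_k}\}$ and restricting to a common truncated cone $\Gamma_{\lambda,M}$, conditions (a) and (b) are stable under the sum $\varphi_{\pi_{n_k}}=\varphi_{\mu_{n_k}}+\varphi_{\sigma_{n_k}}$, so (ii)$\Rightarrow$(i) yields $\pi_{n_k}\xrightarrow{w}\pi:=\mu\boxplus\sigma$. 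As $\pi_{n_k}$ is FID and converges weakly, Lemma~\ref{FIDeq}(2)(i)$\Rightarrow$(ii) furnishes $b'\in\R$ and a finite measure $\tau'$ with $b_{n_k}\to b'$ and $\tau_{n_k}^+\xrightarrow{w}\tau'$.

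Finally I would close the loop by uniqueness. From $b_{n_k}\to b'$ and $\tau_{n_k}=\tau_{n_k}^+-\tau_{n_k}^-\xrightarrow{w}\tau'-\kappa$, Theorem~\ref{prop:converges} shows that $\mu_{n_k}$ converges weakly to the FQID distribution with free characteristic pair $(b',\tau'-\kappa)$; but $\mu_{n_k}\xrightarrow{w}\mu$, so by uniqueness of weak limits and of the free characteristic pair (Proposition~\ref{uniquegenerating}) we obtain $b'=b$ and $\tau'-\kappa=\tau$. Hence along this subsequence $b_{n_k}\to b$ and $\tau_{n_k}\xrightarrow{w}\tau$, and the subsequence principle finishes the proof.

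The genuinely delicate point is the very first extraction. Weak convergence of the $\mu_n$ controls only the \emph{difference} $\tau_n=\tau_n^+-\tau_n^-$, so without a hypothesis like tightness and uniform boundedness of $\{\tau_n^-\}_n$ the two Jordan parts could diverge while cancelling, and the whole reduction to the FID world would collapse. It is exactly this hypothesis that supplies the compactness needed to pin down the limit $\kappa$ of $\{\tau_{n_k}^-\}$; once $\kappa$ is fixed, no separate control of $\tau_n^+$ is required, because $\pi_{n_k}=\mu_{n_k}\boxplus\sigma_{n_k}$ automatically lands in the weakly closed class ${\rm ID}(\boxplus)$ and Lemma~\ref{FIDeq}(2) then forces $\tau_{n_k}^+$ to converge.
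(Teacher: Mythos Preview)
Your proof is correct and follows essentially the same approach as the paper: the paper packages the subsequence extraction into a separate lemma (Lemma~\ref{lem:F}) and then argues by contradiction, but the core idea---feed $\tau_n^-$ back via an FID distribution $\sigma_n$ and apply Lemma~\ref{FIDeq} to the FID sequence $\mu_n\boxplus\sigma_n$---is identical. One minor caveat: the subsequence principle is not valid in \emph{arbitrary} topological spaces, but since weak convergence here is tested against each $f\in C_b(\R)$ separately, it reduces to $\R$-valued sequences where the principle does apply, so your conclusion is unaffected.
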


To prove this theorem, we investigate convergence, tightness, and uniform boundedness for free characteristic pairs. Note that the proofs of Lemma \ref{lem:F} and Theorem \ref{thm:F} are obtained by the direct application of the results in \cite{LPS}.

\begin{lem}\label{lem:F}
Let $\{\mu_n\}_n$ be a sequence of FQID distributions with free characteristic pairs $(b_n,\tau_n)$. If the sequence $\{\mu_n\}_n$ is tight and the sequence $\{\tau_n^-\}_n$ is tight and uniformly bounded, then the sequence $\{b_n\}_n$ is bounded and the sequences $\{\tau_n^+\}_n$ and $\{|\tau_n|\}_n$ are tight and uniformly bounded.
\end{lem}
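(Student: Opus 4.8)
The plan is to reduce the statement to the convergence theory for FID distributions in Lemma \ref{FIDeq} by peeling off the negative part of each signed measure. For each $n$, let $\nu_n^-$ denote the FID distribution whose free characteristic pair is $(0,\tau_n^-)$, which exists because $\tau_n^-$ is a finite nonnegative measure. Putting $\sigma_n:=\mu_n\boxplus\nu_n^-$ and adding Voiculescu transforms, one gets
\begin{align*}
\varphi_{\sigma_n}(z)=\varphi_{\mu_n}(z)+\varphi_{\nu_n^-}(z)=b_n+\int_\R\frac{1+xz}{z-x}\tau_n^+(dx), \qquad z\in\C^+,
\end{align*}
so that $\sigma_n$ is FID with free characteristic pair $(b_n,\tau_n^+)$. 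The whole argument then amounts to transporting tightness through the two FID families $\{\nu_n^-\}_n$ and $\{\sigma_n\}_n$ and reading off the conclusions via Lemma \ref{FIDeq}(2).

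First I would show that $\{\nu_n^-\}_n$ is tight. By the subsequence criterion in Lemma \ref{lem:tightunif}(i) it is enough to extract from every subsequence a weakly convergent one. Given a subsequence, the tightness and uniform boundedness of $\{\tau_n^-\}_n$ let me (again by Lemma \ref{lem:tightunif}(i)) pass to a sub-subsequence along which $\tau_{n_k}^-$ converges weakly to a finite measure $\tau_*^-$; since the scalar parts are identically $0$, the implication $\mathrm{(ii)}\Rightarrow\mathrm{(i)}$ of Lemma \ref{FIDeq}(2) yields weak convergence of the corresponding $\nu_{n_k}^-$. Hence every subsequence of $\{\nu_n^-\}_n$ has a weakly convergent sub-subsequence, and $\{\nu_n^-\}_n$ is tight. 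Next I would show that $\{\sigma_n\}_n$ is tight. Taking any subsequence and using that $\{\mu_n\}_n$ and $\{\nu_n^-\}_n$ are tight sequences of probability measures, I extract a common sub-subsequence along which $\mu_{n_k}\xrightarrow{w}\mu_*$ and $\nu_{n_k}^-\xrightarrow{w}\nu_*$. Free additive convolution is continuous for weak convergence — this follows from Lemma \ref{lem:wconv} by adding the uniformly convergent Voiculescu transforms and checking conditions (a) and (b) for the sum — so $\sigma_{n_k}=\mu_{n_k}\boxplus\nu_{n_k}^-\xrightarrow{w}\mu_*\boxplus\nu_*$. Thus every subsequence of $\{\sigma_n\}_n$ has a weakly convergent sub-subsequence, and $\{\sigma_n\}_n$ is tight.

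Finally I would read off the conclusions from the FID family $\{\sigma_n\}_n$ with pairs $(b_n,\tau_n^+)$. Given any subsequence, tightness of $\{\sigma_n\}_n$ produces a weakly convergent sub-subsequence $\sigma_{n_k}\xrightarrow{w}\sigma_*$; by Lemma \ref{FIDeq}(1) the limit is FID, and the implication $\mathrm{(i)}\Rightarrow\mathrm{(ii)}$ of Lemma \ref{FIDeq}(2) gives $b_{n_k}\to b_*$ and $\tau_{n_k}^+\xrightarrow{w}\tau_*^+$. Consequently every subsequence of $\{b_n\}_n$ has a convergent sub-subsequence, so $\{b_n\}_n$ is bounded, and every subsequence of $\{\tau_n^+\}_n$ has a weakly convergent sub-subsequence, so $\{\tau_n^+\}_n$ is tight by Lemma \ref{lem:tightunif}(i). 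Uniform boundedness of $\{\tau_n^+\}_n$ follows by contradiction: were $\tau_n^+(\R)$ unbounded along some subsequence, a weakly convergent sub-subsequence would force $\tau_{n_k}^+(\R)\to\tau_*^+(\R)<\infty$ (testing against the constant $1$), a contradiction. Since $|\tau_n|=\tau_n^++\tau_n^-$, tightness and uniform boundedness of $\{|\tau_n|\}_n$ are then immediate.

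The main obstacle I anticipate is the middle step, namely propagating tightness through the free convolution $\mu_n\boxplus\nu_n^-$: it rests on the continuity of $\boxplus$ under weak convergence, which must be established (via Lemma \ref{lem:wconv}) and applied along the correct sub-subsequences. The remaining work is bookkeeping with the subsequence characterization of tightness in Lemma \ref{lem:tightunif}, where care is needed to keep tightness and uniform boundedness separate for the signed parts $\tau_n^\pm$, since tightness alone does not control the total masses.
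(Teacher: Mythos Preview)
Your proposal is correct and follows essentially the same route as the paper's proof: both arguments convolve $\mu_n$ with the FID distribution having free characteristic pair $(0,\tau_n^-)$ to obtain an FID sequence with pairs $(b_n,\tau_n^+)$, then run the subsequence argument via Lemma~\ref{lem:tightunif} and Lemma~\ref{FIDeq}. The paper carries this out in a single subsequence pass (extracting one common sub-subsequence along which $\mu_n$, $\tau_n^-$, hence $\rho_n$, hence $\mu_n\boxplus\rho_n$, all converge) rather than separately establishing tightness of $\{\nu_n^-\}$ and then of $\{\sigma_n\}$, and it uses the continuity of $\boxplus$ under weak convergence without comment; but these are presentational differences, not substantive ones.
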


\begin{proof}
As the sequence $\{\mu_{n}\}_n$ is tight and $\{\tau_n^-\}_n$ is tight and uniformly bounded, for any subsequences $\{k_n\}_n$ in $\N$, the subsequence $\{\mu_{k_n}\}_n$ is also tight and the subsequence $\{\tau_{k_n}^-\}_n$ is also tight and uniformly bounded. By Lemma \ref{lem:tightunif}(i), there exists a further subsequence $\{l_{k_n}\}_n$ of the sequence $\{k_n\}_n$ such that the subsequences $\{\mu_{l_{k_n}}\}_n$ and $\{\tau_{l_{k_n}}^-\}_n$ converge weakly to some probability measure $\mu$  and some finite measure $\tau^-$, respectively. 

Let $\rho_{l_{k_n}}$ be an FID distribution with the free characteristic pair $(0,\tau_{l_{k_n}}^-)$ for each $n$. By Lemma \ref{FIDeq}, there is an FID distribution $\rho$  with the free characteristic pair $(0,\tau^-)$ such that $\rho_{l_{k_n}}\xrightarrow{w} \rho$ as $n\rightarrow\infty$.  Thus, we have that $\mu_{l_{k_n}}\boxplus \rho_{l_{k_n}}\xrightarrow{w} \mu\boxplus \rho$ as $n\rightarrow\infty$. Since $\mu_{l_{k_n}}\boxplus \rho_{l_{k_n}}$ is FID with the free characteristic pair $(b_{l_{k_n}}, \tau_{l_{k_n}}^+)$ and Lemma \ref{FIDeq}(1) holds, the measure $\mu\boxplus \rho$ is also FID. Therefore, the sequence $\{\tau_{l_{k_n}}^+\}_n$ converges weakly and the sequence $\{b_{l_{k_n}}\}_n$ converges as $n\rightarrow\infty$ by Lemma \ref{FIDeq}(2).

To summarize the above discussion, for any subsequence $\{k_n\}_n$ in $\N$, there is a further subsequence $\{l_{k_n}\}_n$ of the sequence $\{k_n\}_n$ such that the measure $\tau_{l_{k_n}}^+$ converges weakly and the sequence $\{b_{l_{k_n}}\}_n$ converges. Consequently, the sequence $\{b_n\}_n$ is bounded and the sequence $\{\tau_n^+\}_n$ is tight and uniformly bounded; therefore, so is the sequence $\{|\tau_n|\}_n$.
\end{proof}

\begin{proof}[Proof of Theorem \ref{thm:F}]
By our assumption, the sequence $\{\mu_n\}_n$ is tight, and therefore the sequence $\{b_n\}_n$ is bounded and the sequence $\{\tau_n^+\}_n$ is tight and uniformly bounded by Lemma \ref{lem:F}. Thus, $\{\tau_n\}_n$ is also tight and uniformly bounded. If either (or both) of the sequences $\{b_n\}_n$ and $\{\tau_n\}_n$ does not satisfy the statement of this theorem, then the tightness and (uniform) boundedness of these sequences imply that the sequence $\{\mu_n\}_n$ does not converge weakly, which contradicts the weak convergence of the sequence $\{\mu_n\}_n$. Hence, there exist $b^*\in\R$ and a finite signed measure $\tau^*$ such that $b_n\rightarrow b^*$ and $\tau_n\xrightarrow{w}\tau^*$ as $n\rightarrow\infty$. By the uniqueness of free characteristic pairs (see Proposition \ref{uniquegenerating}) and the convergence $\mu_n\xrightarrow{w}\mu$, we have that $b=b^*$ and $\tau=\tau^*$.
\end{proof}

\begin{problem}
In the classical case, the class of all QID distributions is not closed under weak convergence---see \cite[Section 4]{LPS} for a concrete example using the Bernoulli distributions.
In the free setting, this example does not work well because the Bernoulli distributions are not FQID. Although we will give many examples of FQIDs in Section 4, they are closed under weak convergence.
Thus, whether the class of all FQIDs is closed under weak convergence remains an open problem.
\end{problem}

\subsection{Free L\'{e}vy-Khintchine type representations of FQID distributions}

In this section, we describe how $R$-transforms of FQID distributions give the free L\'{e}vy-Khintchine representations.

\begin{prop}\label{thm:FQID}
Let $\mu$ be an FQID distribution with a free characteristic pair $(b,\tau)$. Then, we have
\begin{align}\label{eq:R}
R_\mu(z)=az^2+  \gamma z+ \int_\R \left(\frac{1}{1-xz}-1-xz\mathbf{1}_{[-1,1]}(x)\right) \nu(dx),\qquad z\in \C^-,
\end{align}
for some $\gamma,a\in\R$ and a quasi-L\'{e}vy-type measure $\nu$ on $\R$. Conversely, if the FQID distribution $\mu$ has an $R$-transform given by \eqref{eq:R}, then $\mu$ has a free characteristic pair $(b,\tau)$ with the formula \eqref{Triplet}. 
\end{prop}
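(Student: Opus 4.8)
The plan is to convert the Pick--Nevanlinna representation of $\varphi_\mu$ into the free L\'evy--Khintchine form via the substitution $R_\mu(z)=z\varphi_\mu(1/z)$, treating the signed measure $\tau$ through its Hahn--Jordan decomposition. Since $\mu$ is FQID, Proposition \ref{prop:nonFQID} guarantees that $\varphi_\mu$ extends analytically to all of $\C^+$, so $R_\mu(z)=z\varphi_\mu(1/z)$ is defined for every $z\in\C^-$ (because $1/z\in\C^+$), and for fixed $z\in\C^-$ the kernel $x\mapsto z(z+x)/(1-xz)$ is bounded on $\R$, hence $|\tau|$-integrable. I would first record
\begin{align*}
R_\mu(z)=bz+\int_\R\frac{z(z+x)}{1-xz}\,\tau(dx),\qquad z\in\C^-,
\end{align*}
and isolate the mass of $\tau$ at the origin: at $x=0$ the kernel equals $z^2$, so the atom contributes $az^2$ with $a:=\tau(\{0\})$.

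The computational core is a single partial-fraction identity: for $x\neq 0$,
\begin{align*}
\frac{z(z+x)}{1-xz}=-\frac{z}{x}+\frac{1+x^2}{x^2}\left(\frac{1}{1-xz}-1\right).
\end{align*}
Defining $\nu$ on $\calB_0$ exactly as in \eqref{Triplet}, i.e. $\nu(dx):=\frac{1+x^2}{x^2}\mathbf{1}_{\R\setminus\{0\}}(x)\,\tau(dx)$, so that $\tau(dx)=\frac{x^2}{1+x^2}\nu(dx)$ off the origin, and multiplying the identity by $x^2/(1+x^2)$ gives
\begin{align*}
\frac{x^2}{1+x^2}\cdot\frac{z(z+x)}{1-xz}=\frac{1}{1-xz}-1-\frac{xz}{1+x^2}.
\end{align*}
Comparing the right-hand side with the free L\'evy--Khintchine kernel $\frac{1}{1-xz}-1-xz\mathbf{1}_{[-1,1]}(x)$, the two differ by $xz\big(\mathbf{1}_{[-1,1]}(x)-\frac{1}{1+x^2}\big)$. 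Integrating against $\nu$ over $\R\setminus\{0\}$ and absorbing this linear-in-$z$ remainder into the drift yields \eqref{eq:R} with $\gamma:=b+\int_\R x\big(\mathbf{1}_{[-1,1]}(x)-\frac{1}{1+x^2}\big)\nu(dx)$, which is precisely the third line of \eqref{Triplet}.

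It then remains to check that $\nu$ really is a quasi-L\'evy-type measure and that the drift integral converges; this is where the signed nature of $\tau$ must be handled with care, and is the main (though routine) obstacle. Because $\tau^+$ and $\tau^-$ are mutually singular by Hahn--Jordan, so are $\frac{1+x^2}{x^2}\tau^\pm$, whence $|\nu|=\frac{1+x^2}{x^2}|\tau|$ on $\R\setminus\{0\}$. Condition (i) holds since $\frac{1+x^2}{x^2}$ is bounded on each $\{|x|\geq r\}$ and $\tau$ is finite; condition (ii) follows from $(1\wedge x^2)\frac{1+x^2}{x^2}$ being bounded on $\R\setminus\{0\}$ together with finiteness of $|\tau|$, and the same bound shows the integrand defining $\gamma$ is $|\tau|$-integrable. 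A clean alternative to the direct manipulation is to apply the established FID representation \eqref{free-LK-rep}--\eqref{Triplet} separately to the two FID distributions with free characteristic pairs $(b,\tau^+)$ and $(0,\tau^-)$ and subtract, using $R_\mu=R_{\mu_1}-R_{\mu_2}$; this automatically legitimizes every splitting of the integrals.

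For the converse, given $R_\mu$ in the form \eqref{eq:R} I would define $\tau$ by reversing \eqref{Triplet}, namely $\tau(\{0\}):=a$ and $\tau(dx):=\frac{x^2}{1+x^2}\nu(dx)$ off the origin, and set $b:=\gamma-\int_\R x\big(\mathbf{1}_{[-1,1]}(x)-\frac{1}{1+x^2}\big)\nu(dx)$. Since $\frac{x^2}{1+x^2}\asymp 1\wedge x^2$, property (ii) of $\nu$ forces $|\tau|(\R)<\infty$, so $\tau$ is a finite signed measure; running the computation above in reverse recovers $\varphi_\mu(z)=b+\int_\R\frac{1+xz}{z-x}\tau(dx)$. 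As $\mu$ is already FQID, Proposition \ref{uniquegenerating} identifies this $(b,\tau)$ as its unique free characteristic pair, completing the argument.
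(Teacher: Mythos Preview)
Your proposal is correct and follows the same route as the paper: define $a$, $\nu$, $\gamma$ from $(b,\tau)$ via \eqref{Triplet} and then verify that $\nu$ is a quasi-L\'evy-type measure. The paper's proof is terser---it simply asserts that the FID computation \eqref{free-LK-rep}--\eqref{Triplet} carries over verbatim (noting only that $a=\tau(\{0\})$ need no longer be nonnegative) and then checks conditions (i) and (ii) for $\nu$ directly from the finiteness of $\tau$, whereas you spell out the kernel identity and the drift correction explicitly; but the content is the same.
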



A quasi-L\'{e}vy-type measure $\nu$ is called a {\it free quasi-L\'{e}vy measure} for the FQID distribution $\mu$ if the $R$-transform of $\mu$ has the representation given by \eqref{eq:R}. In this case, the triplet $(a,\nu,\gamma)$ is called a {\it free characteristic triplet} of the FQID distribution $\mu$.

\begin{proof}[Proof of Proposition \ref{thm:FQID}]
We obtain the form given by \eqref{eq:R} for the $R$-transform $R_\mu$, and show that \eqref{Triplet} describes the free characteristic pair. Note that $a=\tau(\{0\})\ge 0$ is not necessary, because $\tau$ is a signed measure. We only have to show that $\nu$ is a quasi-L\'{e}vy-type measure.

For each $r>0$, we have that
\begin{align*}
\nu|_{\calB_r} (\R\setminus(-r,r))=\int_{\R\setminus (-r,r)}\frac{1+x^2}{x^2}\tau(dx) \le \frac{1+r^2}{r^2}\tau(\R\setminus (-r,r))<\infty,
\end{align*}
and so $\nu|_{\calB_r}$ is finite. Moreover, we have
\begin{align*}
\int_\R (1\land x^2)|\nu|(dx)&=\int_{-1}^1 x^2\cdot \frac{1+x^2}{x^2} |\tau|(dx)+\int_{\R\setminus (-1,1)} |\nu|(dx)\\
&\le 2|\tau|((-1,1))+|\nu|_{\calB_1}|(\R\setminus (-1,1)) <\infty.
\end{align*}
Therefore, $\nu$ is a quasi-L\'{e}vy-type measure on $\R$. 
\end{proof}

Note that the kernel function of the $R$-transform of an FQID distribution is integrable with respect to the quasi-L\'{e}vy-type measure.

\begin{prop}\label{prop:integrable}
For $z\in\C^-$, the function on $\R$ defined by
\begin{align*}
x\mapsto \frac{1}{1-zx}-1-zx\mathbf{1}_{[-1,1]}(x)=:k(x,z)
\end{align*}
is integrable with respect to a quasi-L\'{e}vy-type measure $\nu$.
\end{prop}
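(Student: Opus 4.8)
The plan is to show that $x \mapsto k(x,z)$ is integrable with respect to $|\nu|$, since integrability against $|\nu|$ is precisely the definition of integrability against the quasi-L\'evy-type measure $\nu$. Because $|\nu|$ assigns no mass to $\{0\}$ and satisfies $\int_{\R}(1 \wedge x^2)\,|\nu|(dx) < \infty$ by Proposition \ref{thm:FQID}, the strategy is to split the integration domain into a neighborhood of the origin $(-1,1)$ and its complement $\R \setminus (-1,1)$, and to bound $|k(x,z)|$ by a constant multiple of $1 \wedge x^2$ on each piece.

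First I would fix $z \in \C^-$ and examine the behavior of $k(x,z) = \frac{1}{1-zx} - 1 - zx\mathbf{1}_{[-1,1]}(x)$ near $x=0$. On $(-1,1)$ the indicator equals $1$, so there $k(x,z) = \frac{1}{1-zx} - 1 - zx = \frac{(zx)^2}{1-zx}$, which is $O(x^2)$ as $x \to 0$. The key elementary estimate is that for $z \in \C^-$ the denominator $|1-zx|$ stays bounded below on $x \in (-1,1)$; indeed, writing $z = u + iv$ with $v < 0$, one has $\Im(1-zx) = -vx$, and a direct bound shows $|1-zx| \geq c(z) > 0$ uniformly for $x \in (-1,1)$ (for instance $|1-zx| \geq 1$ when $ux \leq 0$ and in any case $|1-zx|\ge |v||x|$). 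This yields $|k(x,z)| \leq C_1(z)\, x^2$ on $(-1,1)$, so $\int_{(-1,1)}|k(x,z)|\,|\nu|(dx) \leq C_1(z)\int_{(-1,1)} x^2\,|\nu|(dx) < \infty$.

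Next I would handle $\R \setminus (-1,1)$, where the indicator vanishes, so $k(x,z) = \frac{1}{1-zx} - 1 = \frac{zx}{1-zx}$. As $|x| \to \infty$ this tends to $-1$, so $k(\cdot,z)$ is bounded on $\R \setminus (-1,1)$; combined with the lower bound on $|1-zx|$ away from the origin, one gets $|k(x,z)| \leq C_2(z)$ there. Since $1 \wedge x^2 = 1$ on this set, $\int_{\R \setminus (-1,1)}|k(x,z)|\,|\nu|(dx) \leq C_2(z)\,|\nu|(\R \setminus (-1,1)) < \infty$, the finiteness coming again from condition (ii) in the definition of a quasi-L\'evy-type measure. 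Adding the two bounds gives integrability against $|\nu|$, hence against $\nu$.

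The main obstacle—though a mild one—is verifying the uniform lower bound $|1-zx| \geq c(z) > 0$ on the relevant range of $x$, since this is what controls both regimes and guarantees no singularity is introduced by the reciprocal kernel. For $x \in (-1,1)$ this requires ruling out $1-zx$ approaching $0$; because $z \in \C^-$ has strictly negative imaginary part, $1-zx=0$ would force $x = 1/z \notin \R$, so $1-zx$ never vanishes for real $x$, and a compactness/continuity argument on the closed interval $[-1,1]$ (or the explicit inequality $|1-zx|\ge \max\{1, |\Im(zx)|\}$ after a case split on the sign of $\Re(z)x$) secures the bound. Once this is in place the rest is the routine domain-splitting and comparison described above, so I expect the entire argument to be short.
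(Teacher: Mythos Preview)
Your approach is essentially the same as the paper's: split the domain at $|x|=1$, use $k(x,z)=\frac{(zx)^2}{1-zx}$ near the origin and $k(x,z)=\frac{zx}{1-zx}$ away from it, bound $|1-zx|$ from below for $z\in\C^-$, and compare against $(1\wedge x^2)\,|\nu|(dx)$. The paper phrases the argument for $\nu^+$ and $\nu^-$ separately and gives the explicit bounds $|1/(1-zx)|\le |z|/|\Im z|$ on $[-1,1]$ and $|x/(1-zx)|\le 1/|\Im z|$ on the complement, but that is exactly what your compactness/case-split remark is pointing toward.

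One small correction: your split should be $[-1,1]$ versus $\R\setminus[-1,1]$, matching the indicator $\mathbf{1}_{[-1,1]}$. As written, on $\R\setminus(-1,1)$ the indicator does \emph{not} vanish at $x=\pm1$, so your formula $k(x,z)=\frac{zx}{1-zx}$ is off at those two points. This is harmless for the estimate (the values $k(\pm1,z)$ are finite), but you should align the decomposition with the cutoff to keep the algebra honest.
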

\begin{proof}
Let $\nu^+$ and $\nu^-$ be the positive and negative parts of $\nu$, respectively. Then,
\begin{align*}
\int_\R k(x,z)\nu^+(dx)&=z^2\int_{-1}^1 \frac{x^2}{1-zx} \nu^+(dx)+z\int_{\R\setminus[-1,1]} \frac{x}{1-zx}\nu^+(dx)\\
&=z^2\int_{-1}^1 \frac{1}{1-zx} (1\land x^2) \nu^+(dx)+z\int_{\R\setminus[-1,1]} \frac{x}{1-zx}\nu^+|_{\calB_1}(dx).
\end{align*}

We show that the first term on the right-hand side of the above equality is integrable. For $z=u+iv\in \C^-$ and $x\in[-1,1]$, we obtain
\begin{align*}
\left|\frac{1}{1-zx} \right|^2 =\frac{1}{(u^2+v^2)x^2-2ux+1}\le \frac{u^2+v^2}{v^2}=\left(\frac{|z|}{\Im (z)}\right)^2.
\end{align*}
As $\int_\R(1\land x^2)\nu^+(dx)<\infty$, we have that
\begin{align*}
\left| z^2\int_{-1}^1 \frac{1}{1-zx} (1\land x^2) \nu^+(dx)\right| \le \frac{|z|^3}{|\Im(z)|} \int_{-1}^1 (1\land x^2)\nu^+(dx)<\infty , \qquad z\in \C^-.
\end{align*}
 
 Next, we prove that the second term on the right-hand side of the above equality is also integrable. For $z=u+iv\in \C^-$ and $x\in\R$, we have
 \begin{align*}
 \left|\frac{x}{1-zx} \right|^2 =\frac{|x|^2}{(1-ux)^2+v^2x^2}\le \frac{1}{v^2}=\frac{1}{\Im (z)^2}.
 \end{align*}
 As $\nu^+|_{\calB_1}$ is finite, we obtain
 \begin{align*}
\left| z\int_{\R\setminus[-1,1]} \frac{x}{1-zx}\nu^+|_{\calB_1}(dx)\right| \le \left|\frac{z}{\Im(z)}\right| \nu^+|_{\mathcal{B}_1}(\R\setminus [-1,1])<\infty, \qquad z\in \C^-.
 \end{align*}
 
Therefore, the function $k(\cdot, z)$ is integrable with respect to $\nu^+$. Similarly, it is also integrable with respect to $\nu^-$, and hence is so with respect to $\nu$.
\end{proof}

If $\mu$ is FQID with the free characteristic triplet $(a,\nu,\eta)$, then 
\begin{align*}
R_{\mu}(z)=az^2+ \eta z+\int_\R k(x,z)\nu^+(dx)- \int_\R k(x,z)\nu^-(dx), \qquad z\in\C^-,
\end{align*}
by Proposition \ref{prop:integrable}. It follows that there exist FID distributions $\mu^+$ and $\mu^-$ with free characteristic triplets $(a,\nu^+,\gamma)$ and $(0,\nu^-,0)$, respectively, such that $\mu^-\boxplus \mu=\mu^+$.

\subsection{FQID distributions with compact support}
We say that a sequence $\{s_{n}\}_{n=0}^{\infty}$ of complex numbers {\it grows at most exponentially} if there exists a constant $c>0$ such that 
\begin{align*}
|s_{n}| \leq c^{n}, \quad \text{ for all }n\in\N.
\end{align*}
A finite measure $\rho$ has a compact support if and only if the $n$-th moment $m_n(\rho)$ of $\rho$ exists for all $n\in \N$ and the sequence $\{m_n(\rho)\}_{n=0}^\infty$ grows at most exponentially (see \cite[Lemma 13.13]{NiSpBook}).

\begin{thm}\label{thm:Cpt}
Let $\mu$ be an FQID distribution with the free characteristic pair $(b,\tau)$. 
If the signed finite measure $\tau$ is compactly supported, then so is the measure $\mu$.
Furthermore, we have that $\tau(\R)\ge -m_2(\tau)$.
\end{thm}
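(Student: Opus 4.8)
The plan is to read off the support and the variance of $\mu$ directly from the $R$-transform, using the compactness of $\tau$ to gain analyticity at the origin. First I would rewrite the $R$-transform in terms of the free characteristic pair. Since $R_\mu(z)=z\varphi_\mu(1/z)$, substituting the Pick--Nevanlinna form of $\varphi_\mu$ gives
\begin{align*}
R_\mu(z)=bz+\int_\R \frac{z^2+xz}{1-xz}\,\tau(dx).
\end{align*}
If $\tau$ is supported in a compact interval $[-L,L]$, then for each such $x$ and each $z$ with $|z|<1/L$ the integrand is a bounded analytic function of $z$, the denominator $1-xz$ never vanishing. As $\tau$ is a finite signed measure, differentiation under the integral sign (applied to $|\tau|$) shows that $R_\mu$ extends to an analytic function on the disc $\{|z|<1/L\}$ with $R_\mu(0)=0$.

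Next I would convert this analyticity into compact support of $\mu$. Recall that $G_\mu^{-1}(z)=1/z+R_\mu(z)/z$ on the relevant domain. Because $R_\mu(z)/z$ is analytic and bounded near $0$, the function $z\mapsto 1/z+R_\mu(z)/z$ has a simple pole at $0$ with residue $1$, so its reciprocal has a simple zero there. By the inverse function theorem it is biholomorphic from a punctured disc $\{0<|z|<\delta\}$ onto a neighbourhood $\{|w|>R\}$ of $\infty$, and its inverse agrees with the Cauchy transform $G_\mu$ on the part of that neighbourhood lying in $\C^+$, hence everywhere by analytic continuation. Thus $G_\mu$ extends analytically across every real point with $|x|>R$, and Stieltjes inversion forces $\mu(\R\setminus[-R,R])=0$, so $\mu$ is compactly supported. (Alternatively, Cauchy estimates bound the Taylor coefficients of $R_\mu$ exponentially, whence the moments of $\mu$ grow at most exponentially via the moment--cumulant formula and the Catalan bound, and \cite[Lemma 13.13]{NiSpBook} applies.)

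For the inequality I would extract the second Taylor coefficient of $R_\mu$. From the expansion $\frac{z^2+xz}{1-xz}=xz+(1+x^2)z^2+O(z^3)$, integrating against $\tau$ shows that the coefficient of $z^2$ in $R_\mu$ equals
\begin{align*}
\int_\R (1+x^2)\,\tau(dx)=\tau(\R)+m_2(\tau).
\end{align*}
Since $\mu$ is now known to be compactly supported, it has moments of all orders and the Taylor coefficients of $R_\mu$ are precisely its free cumulants; in particular the coefficient of $z^2$ is the second free cumulant $\kappa_2=m_2(\mu)-m_1(\mu)^2$, which is the variance of $\mu$ and therefore nonnegative. Combining the two computations yields $\tau(\R)+m_2(\tau)=\kappa_2\ge 0$, i.e.\ $\tau(\R)\ge -m_2(\tau)$.

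The main obstacle is the middle step, passing rigorously from ``$R_\mu$ analytic near $0$'' to ``$\mu$ compactly supported''. The delicate points are that the $R$-transform is a priori only defined on a truncated cone in $\C^-$, so one must verify that the analytic continuation produced by the $\tau$-integral is genuinely the compositional inverse of $G_\mu$ rather than a merely formal object, and that the inverse function theorem is applied on a full (two-sided) neighbourhood of the pole so that Stieltjes inversion can be invoked at real points near $\infty$. Once compact support is secured, identifying the $z^2$-coefficient with the variance is routine.
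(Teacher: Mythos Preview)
Your argument is correct, and the treatment of the inequality (extracting the $z^2$-coefficient and identifying it with the variance) is exactly what the paper does. Where you differ is in the middle step. The paper takes the combinatorial route that you relegate to a parenthetical: it decomposes $\tau=\tau^+-\tau^-$, uses \cite[Lemma~13.13]{NiSpBook} on each part to bound $|m_n(\tau^\pm)|\le c_\pm^n$, deduces an exponential bound on the free cumulants $\kappa_n(\mu)$, and then invokes \cite[Proposition~13.15]{NiSpBook} to pass to exponential growth of the moments and hence compact support. Your primary approach instead stays inside complex analysis: analyticity of the integrand on $\{|z|<1/L\}$ gives an analytic extension of $R_\mu$, the inverse function theorem at the simple pole of $G_\mu^{-1}$ yields an analytic extension of $G_\mu$ across a real neighbourhood of infinity, and Stieltjes inversion finishes.

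Both approaches are sound. The paper's has the virtue of being entirely self-contained within the free-cumulant formalism and avoids the delicate identification (which you rightly flag) of the analytic continuation of $R_\mu$ with the genuine compositional inverse of $G_\mu$ on a two-sided neighbourhood. Your route is more conceptual and bypasses the Hahn--Jordan splitting and the explicit cumulant bookkeeping; it also makes transparent why the support radius of $\mu$ is governed by the radius of analyticity of $R_\mu$. Note that your ``alternative'' via Cauchy estimates is in fact a slight streamlining of the paper's argument, since the Cauchy bound on Taylor coefficients replaces the detour through $m_n(\tau^\pm)$.
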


\begin{proof}
From the relation $R_\mu(z)=z\varphi_\mu(1/z)$, for $z\in \C^-$ where $|z|$ is sufficiently small, we obtain
\begin{align*}
R_\mu(z)&=bz+z\int_\R \frac{z+x}{1-zx}\tau(dx)\\
&=(b+m_1(\tau))z+(\tau(\R)+m_2(\tau))z^2+\sum_{n=3}^\infty (m_{n-2}(\tau)+m_n(\tau))z^n.
\end{align*}
Define a sequence $\{\kappa_n(\mu)\}_n$ as follows: $\kappa_1(\mu):=b+m_1(\tau)$, $\kappa_2(\mu):=\tau(\R)+m_2(\tau)$, and $\kappa_n(\mu):=m_{n-2}(\tau)+m_n(\tau)$ for all $n\ge 3$. Then $\kappa_n(\mu)$ is the $n$-th free cumulant of $\mu$ (see \cite{NiSpBook} for further information).

By the Hahn-Jordan decomposition theorem, we can find two finite measures $\tau^+$ and $\tau^-$ such that $\tau=\tau^+-\tau^-$. As $\tau$ has a compact support, so do $\tau^+$ and $\tau^-$. By \cite[Lemma 13.13]{NiSpBook}, there exist $c_+,c_->0$ such that $|m_n(\tau^+)|\le c_+^n$ and $|m_n(\tau^-)|\le c_-^n$ for all $n\in \N$. Therefore, we obtain
\begin{align*}
|\kappa_1(\mu)|&\le |b|+|m_1(\tau)|\le |b|+|m_1(\tau_+)|+|m_1(\tau_-)|\le |b|+c_++c_-,\\
|\kappa_2(\mu)|&\le |\tau(\R)|+|m_2(\tau)|\le |\tau(\R)|+c_+^2+c_-^2,\\
|\kappa_n(\mu)|&\le |m_{n-2}(\tau)|+|m_n(\tau)|\le c_+^{n-2}+c_-^{n-2}+c_+^n+c_-^n, \qquad n\ge 3.
\end{align*}
Then, there exists some $c>0$ such that $|\kappa_n(\mu)|\le c^n$ for all $n\in \N$, and therefore, the sequence $\{\kappa_n(\mu)\}_{n=1}^\infty$ grows at most exponentially. By \cite[Proposition 13.15]{NiSpBook}, the moment sequence $\{m_n(\mu)\}_{n=0}^\infty$ also grows at most exponentially. Hence, $\mu$ is compactly supported.

In this case, we then obtain $\tau(\R)+m_2(\tau)=\kappa_2(\mu)=m_2(\mu)-m_1(\mu)^2\ge0$.
\end{proof}

\begin{problem}
Suppose that $\mu$ is an FQID distribution with the free characteristic pair $(b,\tau)$. If $\mu$ has a compact support, then does the measure $\tau$ have a compact support?
\end{problem}

\section{Examples}

Let $\R_+:=[0,\infty)$. In the following, the principal square root of $z$ is defined by
\begin{equation}
\label{square-root}
\sqrt{z} :=|z|^{\frac{1}{2}}e^{\frac{i \arg(z)}{2}},\quad 0 \le \arg(z) < 2\pi,
\end{equation}
namely, the square root function is defined using the nonnegative real axis $\R_{+}$ as a branch cut.
Note that, in this case, $z$ has to lie in $\C\setminus\R_{+}$.

In this section, various examples of FQID distributions (or free quasi-characteristic triplets) and their distributional properties are provided.
For this purpose, let us recall three important families of distributions
in free probability (see \cite{BeVo1993, NiSpBook, MS17} for details).

\begin{itemize}
\item Let $\mathbf{C}_a$ be the Cauchy distribution with the parameter $a>0$, that is, 
\begin{align*}
\mathbf{C}_a(\rmd x):=\frac{a}{\pi}\frac{1}{x^2+a^2}\mathbf{1}_\mathbb{R}(x)\rmd x.
\end{align*}
The special case $a=1$, the standard Cauchy distribution $\mathbf{C}_{1}$, is denoted by $\mathbf{C}$.
Note that $\mathbf{C}_{a} = \mathbf{D}_{a}(\mathbf{C})=\mathbf{C}^{\boxplus a}$ for all $a>0$.
The Cauchy distribution is FID and  
its Voiculescu transform is given by
\begin{align*}
\varphi_{\mathbf{C}_{a}}(z)=\int_{\R} \frac{1 +zt}{z-t}\frac{a}{\pi(1+t^{2})}\rmd t =-ai, \qquad z\in\mathbb{C}^+.
\end{align*}
In this case, the free characteristic triplet is $(0,\frac{a}{\pi u^2}\rmd u,0)$.

\item Let ${\bf MP}(\lambda)$ be the Marchenko-Pastur law with the parameter (variance) $\lambda>0$.
The Voiculescu transform $\varphi_{{\bf MP}(\lambda)}$ is given by
\begin{align*}
\varphi_{{\bf MP}(\lambda)}(z)
=\frac{\lambda}{2} + \int_{\R}\frac{1+tz}{z-t} \frac{\lambda}{2}\delta_{1}(\rmd t)
=\frac{\lambda z}{z-1}, \qquad z\in\mathbb{C}^+.
\end{align*}
The measure ${\bf MP}(\lambda)$ is FID and the free characteristic triplet is $(0,\lambda \delta_{1},0)$.
Note that, for $c\not=0$, the free characteristic triplet of $\mathbf{D}_{c}({\bf MP}(\lambda))$ is $(0,\lambda \delta_{c},0)$ and the Voiculescu transform $\varphi_{\mathbf{D}_{c}({\bf MP}(\lambda))}$ is given by
\begin{align*}
\varphi_{\mathbf{D}_{c}({\bf MP}(\lambda))}(z)
=\frac{\lambda}{c^{2}+1} + \int_{\R}\frac{1+tz}{z-t} \frac{\lambda c}{c^{2}+1}\delta_{c}(\rmd t)
=\frac{\lambda c z}{z-c}.
\end{align*}

\item Let ${\bf S}(m,\sigma^2)$ be the semicircle law with mean $m\in\R$ and variance $\sigma^2>0$, i.e.,
\begin{align*}
{\bf S}(m,\sigma^2)(dx):= \frac{1}{2\pi \sigma^2} \sqrt{4\sigma^2- (x-m)^2} \mathbf{1}_{[m-2\sigma,m+2\sigma]}(x)dx.
\end{align*}
The special case ${\bf S}(0,1)$ is simply denoted by ${\bf S}$. This is FID and the Voiculescu transform $\varphi_{{\bf S}(m,\sigma^{2})}$ has the form
\begin{align*}
\varphi_{{\bf S}(m,\sigma^{2})}(z) 
= m + \int_{\R} \frac{1 +zt}{z-t}\sigma^{2}\delta_{0}(\rmd t)
= m +  \frac{\sigma^{2}}{z}, \qquad z\in\mathbb{C}^+.
\end{align*}
Its free characteristic triplet is $(\sigma^{2},0,m)$. Note that $\mathbf{D}_{a}({\bf S}(m,\sigma^{2}))= {\bf S}(am,a^{2}\sigma^{2})$.

\end{itemize}

\subsection{Free deconvolution with the Cauchy distribution}

In this section, we construct explicit FQID distributions from the Cauchy distribution. 
The support of the free L{\'e}vy measure of the Cauchy distribution is unbounded, whereas the support of the free L{\'e}vy measure of compactly supported FID distributions is bounded. Thus, the difference between the free L{\'e}vy measure of the Cauchy distribution and that of a compactly supported FID distribution becomes a signed measure. 
We investigate whether there exist FQID probability measures whose quasi-free L{\'e}vy measure is a signed measure. 

This idea comes from the results in \cite{ATV}.
If $\mu\in\P$ has a finite variance $\sigma^2$, then by \cite[Theorem 1.2]{ATV}, there exists some $\rho(\mu) \in \P$ such that 
\begin{align}\label{FQIDex}
\mu\boxplus \rho(\mu)=\mathbf{C}_{2\sqrt{2}\sigma}.
\end{align}
In other words, for a given  $\mu\in\P$ having a finite variance $\sigma^2$, $\rho(\mu) := \mathbf{C}_{2\sqrt{2}\sigma}\boxminus \mu$ defines a probability measure on $\R$. 
Furthermore, if $\mu$ is an FID distribution with a compact support, then $\rho(\mu)$ is FQID. The probability measures
$\mathbf{MP}(\lambda)$ and $\mathbf{S}(0,\sigma^2)$ are examples of such $\mu$, as both distributions have finite variances and are FID with compact supports. That is,
\begin{itemize}
\item $\mathbf{MP}(\lambda)\boxplus \rho(\mathbf{MP}(\lambda))=\mathbf{C}_{2\sqrt{2\lambda}}\,$ and $\,\rho(\mathbf{MP}(\lambda))$ is FQID,
\item$\mathbf{S}(0,\sigma^2)\boxplus \rho(\mathbf{S}(0,\sigma^2))=\mathbf{C}_{2\sqrt{2}\sigma}\,$ and $\,\rho(\mathbf{S}(0,\sigma^2))$ is FQID.
\end{itemize}

In Sections 4.1.1, 4.1.2, and 4.1.3, we investigate  signed measures generated from the L{\'e}vy measure of the Cauchy distribution and the Marchenko-Pastur, semicircle, and free Meixner distributions, respectively.

\subsubsection{Case of the Marchenko-Pastur law}

From now on we will denote $\mathbf{MP}\big(c,\lambda):=\mathbf{D}_{c}\left(\mathbf{MP}(\lambda)\right)$.
Let $a,\lambda>0$ and $c \not= 0$.
Consider the analytic function
\begin{align}\label{eq:VCMP}
\varphi(z) 
= \varphi_{\mathbf{C}_{a}}(z) - \varphi_{\mathbf{MP}(c,\lambda)}(z)
= -ai -\frac{\lambda c z}{z- c}, \qquad z\in\C^+.
\end{align}

We will prove that $\varphi$ is the Voiculescu transform of some probability measure, or equivalently, that the function $K:\C^+\rightarrow \C^+$ defined by 
\begin{align*}
K(z) :=\varphi(z)+z= -ai -\frac{\lambda c z}{z- c}+z, \qquad z\in\C^+
\end{align*}
is the compositional right inverse of an $F$-transform of some probability measure.

A straightforward calculation gives
\begin{equation}
\label{F-trans-4.1}
F(z) :=K^{-1}(z)
= \frac{z+c(\lambda+1)+ai + \sqrt{q(z)}}{2},\quad z\in\C^+,
\end{equation}
with an appropriate branch of the square root, where 
$$
q(z) :=(z+c(\lambda+1)+ai)^{2} -4c(z+ai).
$$
We will show that $F$ is a Pick function.
It is easy to check that $\lim_{y\rightarrow\infty} F(iy)/iy=1$.
Hence, it suffices to show that $q(\C^{+}) \subset \C\setminus\R_{+}$ (recall that the branch cut of the square root is $\R_{+}$ in our setting). Then, $\sqrt{q(z)} \in \C^{+}$ is well-defined on $\C^{+}$, which yields $\Im F(z) >0$ for all $z \in \C^{+}$.

Let $z =x + iy\in\C^{+}$ (namely $x \in \R$ and $y > 0)$. We have that
\begin{equation*}
q(x+iy)= (x + c(\lambda+1))^{2} -4cx -(y+a)^{2} + 2i(y+a)(x + c(\lambda-1)).
\end{equation*}
Then, $q(x+iy) \in \R$ if and only if $x = c(1 - \lambda)$. From the equation
\begin{align*}
q(c(1 - \lambda)+iy)= 4c^{2}\lambda -(y+a)^{2},
\end{align*}
it follows that, if $0<\lambda \leqq \left(\frac{a}{2c}\right)^{2}$, then $q(x+iy) \notin \R_{+}$, which implies that $F(\C^+)\subset \C^+$. 
We conclude that, in this case, $F$ in \eqref{F-trans-4.1} is a Pick function.

By Proposition \ref{exist}, there exists a probability measure $\rho_{a,c.\lambda}$ on $\R$ such that $F=F_{\rho_{a,c,\lambda}}$. Finally, we obtain
\begin{align*}
\mathbf{MP}(c,\lambda)\boxplus \rho_{a,c,\lambda}=\mathbf{C}_a,
\end{align*}
and so $\rho_{a,c,\lambda}$ is FQID for all $a,\lambda>0$ and $c\neq 0$ with $0<\lambda \leqq \left(\frac{a}{2c}\right)^{2}$.

We further observe that, if $0<y<\frac{c^2\lambda}{a}$, 
\begin{align*}
\Im \varphi_{\rho_{a,c,\lambda}}(c+iy)=-a+\frac{c^2\lambda}{y}>0.
\end{align*}
Hence, the measure $\rho_{a,c,\lambda}$ is not FID. 
For all $x\in \R$, calculus indicates that the nontangential limit $\sphericalangle{\rm \mathchar`-}\lim_{z\rightarrow x} F_{\rho_{a,c,\lambda}}(z)$ is not zero. Thus, $\rho_{a,c,\lambda}$ has no singular parts by \cite[Theorem 2.7]{P03}. 
We summarize the results.

\begin{thm}\label{thm:ex1}
Let $a,\lambda>0$, $c\not=0$ with $0 < \lambda \leqq \left(\frac{a}{2c}\right)^{2}$.
Then, there exists some $\rho_{a,c,\lambda}\in\P$ such that its Voiculescu transform is
\begin{align*}
\varphi_{\rho_{a,c,\lambda}}(z) = -ai -\frac{\lambda c z}{z- c}, \qquad z\in \C^+.
\end{align*}
It has the following properties:
\begin{enumerate}
\item $ \mathbf{MP}(c,\lambda) \boxplus \rho_{a,c,\lambda}=  \mathbf{C}_{a} $. In other words, $\rho_{a,c,\lambda}=  \mathbf{C}_{a}\boxminus \mathbf{MP}(c,\lambda)$ is FQID, rather than FID. Moreover, it is absolutely continuous with respect to the Lebesgue measure.
\item Its free characteristic triplet is given by $(0,\frac{a\rmd u}{\pi u^2}-\lambda\delta_c,0)$. 
\item Its free characteristic pair is given by $(-\frac{\lambda}{c^2+1},\frac{a \rmd u}{\pi(u^2+1)}-\frac{\lambda c}{c^2+1}\delta_c)$.
\end{enumerate}
\end{thm}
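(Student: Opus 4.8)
The plan is to regard the existence of $\rho_{a,c,\lambda}$ and the explicit form of its Voiculescu transform as already secured by the discussion preceding the statement: under the hypothesis $0<\lambda\leq(a/2c)^2$ the function $F$ of \eqref{F-trans-4.1} is a Pick function with $F(iy)/iy\to1$, so Proposition \ref{exist} produces a probability measure whose reciprocal Cauchy transform is $F$, hence whose Voiculescu transform is $-ai-\lambda c z/(z-c)$. It therefore remains only to verify the three listed properties, and I would split the proof into three corresponding parts.

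For property (1), the identity $\mathbf{MP}(c,\lambda)\boxplus\rho_{a,c,\lambda}=\mathbf{C}_a$ is immediate from additivity of Voiculescu transforms: by construction $\varphi_{\rho_{a,c,\lambda}}+\varphi_{\mathbf{MP}(c,\lambda)}=\varphi_{\mathbf{C}_a}$ on their common domain, and the Voiculescu transform determines the measure. That $\rho_{a,c,\lambda}$ is FQID is then exactly the free-deconvolution reading of Proposition \ref{basic}(i): it equals $\mathbf{C}_a\boxminus\mathbf{MP}(c,\lambda)$ with both $\mathbf{C}_a$ and $\mathbf{MP}(c,\lambda)$ in ${\rm ID}(\boxplus)$. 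To rule out membership in ${\rm ID}(\boxplus)$ I would apply Proposition \ref{prop:Voiculescu transform}: evaluating on the vertical ray above $c$ gives $\Im\varphi_{\rho_{a,c,\lambda}}(c+iy)=-a+c^2\lambda/y$, which is strictly positive for $0<y<c^2\lambda/a$, so $\varphi_{\rho_{a,c,\lambda}}$ fails to map $\C^+$ into $\C^-\cup\R$. For absolute continuity I would return to the closed form \eqref{F-trans-4.1}, compute the nontangential boundary limit of $F_{\rho_{a,c,\lambda}}$ at each real point, check that it never vanishes, and invoke \cite[Theorem 2.7]{P03} to exclude a singular part.

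Properties (2) and (3) I would obtain purely by linearity. Both $\mathbf{C}_a$ and $\mathbf{MP}(c,\lambda)$ are FID with data recorded in the examples of this section, so their free characteristic pairs and triplets are known explicitly. Since $\varphi_{\rho_{a,c,\lambda}}$ is the difference of the two Voiculescu transforms and the Pick--Nevanlinna integral against $(1+xz)/(z-x)$ is additive in the underlying (signed) measure, the free characteristic pair of $\rho_{a,c,\lambda}$ is the difference of the two pairs; uniqueness of the FQID pair (Proposition \ref{uniquegenerating}) guarantees that this difference is genuinely \emph{the} pair, yielding property (3). Because the pair-to-triplet correspondence \eqref{Triplet} is linear in $(b,\tau)$ and the relevant kernel is integrable against the quasi-L\'{e}vy-type measure by Proposition \ref{prop:integrable}, the free characteristic triplet of $\rho_{a,c,\lambda}$ is likewise the difference of the two triplets, which gives property (2).

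The substantive analytic work, namely verifying $q(\C^+)\subset\C\setminus\R_+$ so that $F$ is Pick, has already been carried out before the statement. Within the proof proper I expect the main obstacle to be the absolute-continuity assertion in (1): it hinges on controlling the boundary values of $\sqrt{q}$ and confirming that $F_{\rho_{a,c,\lambda}}$ has a nonzero nontangential limit at \emph{every} real point, which is the only place where a genuine computation is unavoidable. The remaining bookkeeping for (2) and (3) is routine once additivity of the representation over signed measures and uniqueness from Proposition \ref{uniquegenerating} are in hand.
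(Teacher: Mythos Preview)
Your proposal is correct and follows essentially the same route as the paper: the analytic work (showing $F$ is a Pick function, applying Proposition~\ref{exist}, the computation of $\Im\varphi_{\rho_{a,c,\lambda}}(c+iy)$ to exclude FID, and the nontangential-limit argument via \cite[Theorem~2.7]{P03}) is exactly what the paper does in the paragraphs preceding the theorem. Your treatment of (2) and (3) via linearity and Proposition~\ref{uniquegenerating} spells out what the paper leaves implicit by simply reading off the difference of the known Cauchy and Marchenko--Pastur data listed at the start of Section~4.
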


Let us consider two specific cases.

If $a=2\sqrt{2\lambda}$ and $c=1$, then $\rho_{2\sqrt{2\lambda}, 1, \lambda}$ coincides with the measure $\rho(\mathbf{MP}(\lambda))$ in \eqref{FQIDex}. If $a=0$, the function $\varphi$ in \eqref{eq:VCMP} is not the Voiculescu transform.

If $c=a>0$, then for $0<\lambda \leqq \frac{1}{4}$,
there exists a probability measure $\rho_{a,\lambda}:=\rho_{a,a,\lambda}\in\P$ such that $\varphi_{\rho_{a,\lambda}}(z) = -ai -\frac{\lambda a z}{z- a}$ by the above discussion. We can see that $\mathbf{D}_{a}(\rho_{1,\lambda})=\rho_{a,\lambda}$.

\begin{ex}
We consider the Cauchy transform of $\rho_{1,\lambda}$:
\begin{align*}
G_{\rho_{1,\lambda}}(z) &= 1/F_{\rho_{1,\lambda}}(z)  \\
&= \frac{2}{z+(\lambda+1)+i + \sqrt{(z+(\lambda+1)+i)^{2} -4(z+i)}}\\
&= \frac{z+(\lambda+1)+i - \sqrt{(z+(\lambda+1)+i)^{2} -4(z+i)}}{2(z+i)}\\
&= \frac{z^{2}+(\lambda+1)(z-i)+1  - (z-i)\sqrt{(z+(\lambda+1)+i)^{2} -4(z+i)}}{2(z^{2} + 1)},
\end{align*}
for $z\in\C^+$. By the Stieltjes inversion formula, the probability density function $f(x)$ of $\rho_{1,\lambda}$ is
\begin{align*}
f(x) &= -\frac{1}{\pi}\lim_{y\downarrow 0}\Im(G_{\rho_{1,\lambda}}(x+iy))= \frac{1}{2\pi(x^{2} + 1)} \left\{\lambda + 1 + xv(x)- u(x) \right\}, \qquad x\in\R,
\end{align*}
where
\begin{align*}
u(x) =\Re\left[\sqrt{(x+(\lambda+1)+i)^{2} -4(x+i)}\right], \qquad x\in\R
\end{align*}
and
\begin{align*}
v(x) = \Im\left[\sqrt{(x+(\lambda+1)+i)^{2} -4(x+i)}\right], \qquad x\in\R.
\end{align*}
We give an explicit density function for $\lambda = 1/4$ below. Set
\begin{align*}
q(x):=\left(x+\left(\frac{1}{4}+1\right)+i\right)^2-4(x+i)=\frac{1}{16}(4x-3)^2+\frac{1}{2}(4x-3)i, \qquad x\in\R.
\end{align*}
We then obtain $q(x)=re^{i\theta}$, where
\begin{align*}
r=|q(x)|=\frac{1}{16}|4x-3|\sqrt{16x^2-24x+73}
\end{align*}
and
\begin{align*}
\theta&=\arg(q(x))=\begin{cases}
\arcsin \left( \frac{8}{\sqrt{16x^2-24x+73}}\right), & x\ge 3/4\\
2\pi-\arcsin \left( \frac{8}{\sqrt{16x^2-24x+73}}\right), & x< 3/4.
\end{cases}
\end{align*}
Using calculus, we find that
\begin{align*} 
u(x)&=\Re \sqrt{q(x)}=\sqrt{r}\cos \left(\frac{1}{2} \theta\right)\\
&=\frac{1}{4} \text{sign}(4x-3)\sqrt{|4x-3|}(16x^2-24x+73)^{1/4} \cos\left(\frac{1}{2} \arcsin \left( \frac{8}{\sqrt{16x^2-24x+73}}\right) \right)\\
&=\frac{1}{4\sqrt{2}} \text{sign}(4x-3)\sqrt{|4x-3|} \sqrt{\sqrt{16x^2-24x+73}+|4x-3|}
\end{align*}
and
\begin{align*}
v(x)&=\Im \sqrt{q(x)}=\sqrt{r}\sin \left(\frac{1}{2}\theta\right)\\
&=\frac{1}{4} \sqrt{|4x-3|}(16x^2-24x+73)^{1/4} \sin\left( \frac{1}{2} \arcsin \left( \frac{8}{\sqrt{16x^2-24x+73}}\right)\right)\\
&=\frac{1}{4\sqrt{2}}\sqrt{|4x-3|}\sqrt{\sqrt{16x^2-24+73}-|4x-3|},
\end{align*}
where 
\begin{align*}
\text{sign}(x)=\begin{cases}
1, &x\ge 0\\
-1,& x<0.
\end{cases}
\end{align*}
Finally, we obtain the density function of $\rho_{1,1/4}$ (see Figure \ref{fig:one}):
\begin{align*}
f(x)&=\frac{1}{2\pi (x^2+1)}\left( \frac{5}{4}+xv(x)-u(x)\right)\\
&=\frac{5\sqrt{2}+\sqrt{|4x-3|}\left( x p_-(x)- \text{sign}\left(4x-3\right) p_+(x)\right) }{8\sqrt{2}\pi (x^2+1)}, \qquad x\in\R,
\end{align*}
where
\begin{align*}
p_{\pm}(x):=\sqrt{\sqrt{16x^2-24x+73}\pm |4x-3|}, \qquad x\in\R.
\end{align*}

\begin{figure}[htbp]
 \begin{center}
  \includegraphics[width=100mm]{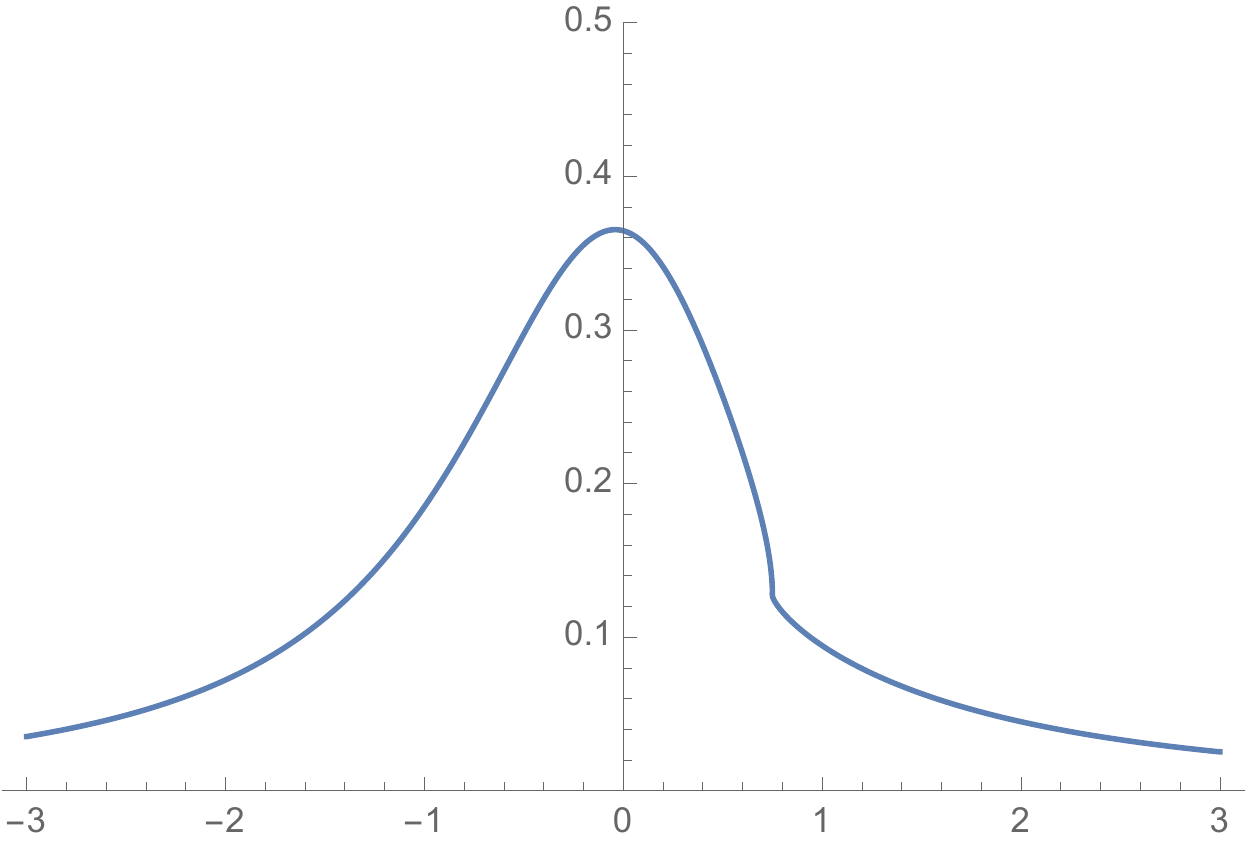}
 \end{center}
 \caption{Probability density function of $\rho_{1,1/4}$}
 \label{fig:one}
\end{figure}

\end{ex}

\subsubsection{Case of the semicircle law}

Next, consider 
$$
\varphi(z) = \varphi_{\mathbf{C}_{a}}(z) - \varphi_{\mathbf{S}(0,\sigma^{2})}(z) = -ai -\frac{\sigma^{2}}{z}
$$ 
for $a,\sigma>0$ and $z\in\C^+$.
We will show that $\varphi$ is the Voiculescu transform of some probability measure.

Let $K(z) := z -ai -\frac{\sigma^{2}}{z}$. Then, a formal computation implies that
\begin{align*}
F(z) :=K^{-1}(z)= \frac{z+ai + \sqrt{(z+ai)^{2} +4\sigma^{2}}}{2}, \qquad z\in\C^+.
\end{align*}
It is easy to see that $F(iy)/iy\rightarrow1$ as $y\rightarrow\infty$. For $2\sigma \le a$,  $\sqrt{(z+ai)^2+4\sigma^2}$ is in $\C^+$ for all $z\in \C^+$.
In a similar manner as for Example \ref{thm:ex1}, there is a probability measure $\gamma_{a,\sigma^2}$ such that $F=F_{\gamma_{a,\sigma^2}}$, and therefore $\varphi=\varphi_{\gamma_{a,\sigma^2}}$ for all $a,\sigma>0$ with $2\sigma\le a$. By definition, it satisfies 
$ \mathbf{S}(0,\sigma^{2})\boxplus \gamma_{a,\sigma^{2}} =\mathbf{C}_{a} $, where $\gamma_{a,\sigma^2}$ is not FID. By \cite[Theorem 2.7]{P03}, the measure $\gamma_{a,\sigma^2}$ is absolutely continuous with respect to the Lebesgue measure.
We summarize the above facts as follows.

\begin{thm}\label{C-S}
Let $a,\sigma>0$ with $2\sigma\le a$. 
Then, there exists some $\gamma_{a,\sigma}\in\P$ for which the Voiculescu transform is
\begin{align*}
\varphi_{\gamma_{a,\sigma^2}}(z)=-ai-\frac{\sigma^2}{z}, \qquad z\in\C^+.
\end{align*}
It has the following properties:
\begin{enumerate}
\item $ \mathbf{S}(0,\sigma^{2})\boxplus \gamma_{a,\sigma^{2}} =\mathbf{C}_{a} $. In other words, $\gamma_{a,\sigma^{2}} =\mathbf{C}_{a} \boxminus \mathbf{S}(0,\sigma^{2})$.
\item $\gamma_{a,\sigma^2}$ is FQID, rather than FID. Moreover, it is absolutely continuous with respect to the Lebesgue measure.
\item $\gamma_{a,\sigma^2}=\gamma_{1,(\sigma/\sqrt{a})^2}^{\boxplus a}=\gamma_{a/\sigma^2,1}^{\boxplus \sigma^2}$ for $a\ge 1$ and $\sigma\ge 1$. 
\item Its free characteristic triplet is given by $(-\sigma^2, \frac{a\rmd u}{\pi u^2},0)$.
\item Its free characteristic pair is $(0, \frac{a\rmd u}{\pi (1+u^2)}-\sigma^2\delta_0(\rmd u))$.
\end{enumerate}
\end{thm}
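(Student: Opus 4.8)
The plan is to realize $\gamma_{a,\sigma^2}$ through its reciprocal Cauchy transform, exactly as in the Marchenko--Pastur case treated just above. First I would set $K(z)=z+\varphi(z)=z-ai-\sigma^2/z$ and solve $K(w)=z$ for $w$, arriving at the candidate $F(z)=\tfrac{1}{2}\bigl(z+ai+\sqrt{(z+ai)^2+4\sigma^2}\bigr)$. The decisive step is to verify that $F$ is a Pick function, i.e.\ that $F:\C^+\to\C^+$ and $F(iy)/(iy)\to1$. The normalization at infinity is immediate, so everything reduces to showing that the radicand $q(z):=(z+ai)^2+4\sigma^2$ avoids the branch cut $\R_+$ on all of $\C^+$. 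Writing $z=x+iy$ with $y>0$, I would observe that $\Im q(x+iy)=2x(y+a)$ vanishes only for $x=0$, and that on the imaginary axis $q(iy)=4\sigma^2-(y+a)^2<0$ precisely because $y+a>a\ge 2\sigma$. Hence $q(\C^+)\subset\C\setminus\R_+$, so $\sqrt{q}\in\C^+$ and $\Im F=\tfrac{1}{2}(y+a+\Im\sqrt{q})>0$; Proposition \ref{exist} then produces the probability measure $\gamma_{a,\sigma^2}$ with $F=F_{\gamma_{a,\sigma^2}}$. I expect this branch-cut analysis, and in particular the exact role played by the hypothesis $2\sigma\le a$, to be the main obstacle; the remaining items are essentially bookkeeping.

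With existence in hand, item (1) is immediate: by construction $\varphi_{\gamma_{a,\sigma^2}}=\varphi_{\mathbf{C}_a}-\varphi_{\mathbf{S}(0,\sigma^2)}$, so additivity of the Voiculescu transform gives $\mathbf{S}(0,\sigma^2)\boxplus\gamma_{a,\sigma^2}=\mathbf{C}_a$ and the deconvolution identity $\gamma_{a,\sigma^2}=\mathbf{C}_a\boxminus\mathbf{S}(0,\sigma^2)$. For the free characteristic pair (item (5)) I would read off the Pick--Nevanlinna representation $\varphi_{\gamma_{a,\sigma^2}}(z)=0+\int_\R\frac{1+uz}{z-u}\,\tau(\rmd u)$ with the finite signed measure $\tau=\frac{a}{\pi(1+u^2)}\rmd u-\sigma^2\delta_0$, using the known values $\varphi_{\mathbf{C}_a}=-ai$ and $\varphi_{\mathbf{S}(0,\sigma^2)}=\sigma^2/z$; finiteness of $|\tau|$ shows $\gamma_{a,\sigma^2}$ is FQID, and Proposition \ref{uniquegenerating} confirms that $(0,\tau)$ is \emph{the} free characteristic pair. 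That $\gamma_{a,\sigma^2}$ is not FID (item (2)) follows from Proposition \ref{prop:Voiculescu transform}: since $\Im\varphi_{\gamma_{a,\sigma^2}}(iy)=\sigma^2/y-a>0$ for $0<y<\sigma^2/a$, the transform cannot take values in $\C^-\cup\R$. Absolute continuity is exactly the argument already recorded before the statement: the nontangential boundary values of $F_{\gamma_{a,\sigma^2}}$ satisfy $\Im F\ge a/2>0$ and are in particular nonzero, so \cite[Theorem 2.7]{P03} excludes any singular part.

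The last two items are computational. For the free characteristic triplet (item (4)) I would feed the pair $(0,\tau)$ into the conversion formulas \eqref{Triplet}: the Gaussian coefficient is $\tau(\{0\})=-\sigma^2$, genuinely negative and illustrating the phenomenon announced in the abstract; the free L\'evy measure is $\frac{1+u^2}{u^2}\cdot\frac{a}{\pi(1+u^2)}\rmd u=\frac{a}{\pi u^2}\rmd u$ on $\R\setminus\{0\}$; and since this measure is symmetric, the odd integrand defining $\gamma$ integrates to $0$, so $\gamma=0$, giving the triplet $(-\sigma^2,\frac{a}{\pi u^2}\rmd u,0)$ in accordance with Proposition \ref{thm:FQID}. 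Item (3) then follows from the homogeneity $\varphi_{\mu^{\boxplus t}}=t\,\varphi_\mu$ together with uniqueness of the free characteristic pair: for instance $\varphi_{\gamma_{1,(\sigma/\sqrt{a})^2}}(z)=-i-\frac{\sigma^2}{az}$, and multiplying by $a$ reproduces $\varphi_{\gamma_{a,\sigma^2}}$, the second factorization being handled identically. Here the restriction $a,\sigma^2\ge1$ guarantees that the powers $\boxplus a$ and $\boxplus\sigma^2$ are defined, and the one point requiring care is to confirm that the base measures $\gamma_{1,(\sigma/\sqrt{a})^2}$ and $\gamma_{a/\sigma^2,1}$ lie in the existence range $2\sigma\le a$ of the first part before invoking Proposition \ref{uniquegenerating} to identify the two sides.
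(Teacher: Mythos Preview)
Your proposal is correct and follows essentially the same approach as the paper: invert $K(z)=z-ai-\sigma^2/z$ to obtain $F$, verify $F$ is a Pick function via the branch-cut analysis of $q(z)=(z+ai)^2+4\sigma^2$ (where $2\sigma\le a$ enters exactly as you describe), invoke Proposition~\ref{exist}, and then read off items (1)--(5). Your write-up is in fact more detailed than the paper's, which leaves items (3)--(5) entirely to the reader; your caution about whether the base measures in item (3) fall within the existence range is well taken and is not addressed in the paper either.
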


In particular, $\gamma_{2\sqrt{2}\sigma,\sigma^2}$ (the case in which $a=2\sqrt{2}\sigma$) coincides with the measure $\rho(\mathbf{S}(0,\sigma^2))$ in \eqref{FQIDex}.

\begin{rem}\label{rem:Gaussianpart}
\begin{enumerate}[\rm(1)]
\item The FQID distribution $\gamma_{a,\sigma^2}$ has a free Gaussian part that takes negative values. However, if we consider the case of $a=0$, then $\varphi(z)=-\frac{\sigma^2}{z}$ is not the Voiculescu transform of any probability measure.

\item In general, for a classical characteristic triplet the Gaussian part must be nonnegative, see \cite[Lemma~2.7]{LPS}. Therefore the triplet $(-\sigma^2, \frac{a \rmd u}{\pi u^2}, 0)$ in Theorem \ref{C-S} is not a classical characteristic triplet.
\end{enumerate}
\end{rem}

\begin{ex}

Consider $a=1$ and $\sigma=1/2$. The Cauchy transform of $\gamma_{1,(1/2)^{2}}$ is given by
\begin{align*}
G_{\gamma_{1,(1/2)^2}}(z) = \frac{2}{z+i+\sqrt{(z+i)^{2}  + 1}}
= -2\left(z + i -\sqrt{z^{2} +2zi}\right), \qquad z\in\C^+.
\end{align*}
By the Stieltjes inversion formula, 
the probability density function $f(x)$ of $\gamma_{1,(1/2)^2}$ is
\begin{align*}
f(x) &= -\frac{1}{\pi}\lim_{y\downarrow 0}\Im(G_{\gamma_{1,(1/2)^2}}(x+iy))\\
&= \frac{2}{\pi} \left(1- \Im\left[\sqrt{x^{2} +2xi}\right]\right)\\
&=\frac{\sqrt{2}}{\pi}\left( \sqrt{2}-\sqrt{|x|\sqrt{x^2+4}-x^2}\right), \qquad x\in\R.
\end{align*}
Figure 2 shows the shape of the probability density function of $\gamma_{1,(1/2)^2}$. Clearly, it is not differentiable at $x = 0$.
\begin{figure}[ht]
 \begin{center}
  \includegraphics[width=70mm]{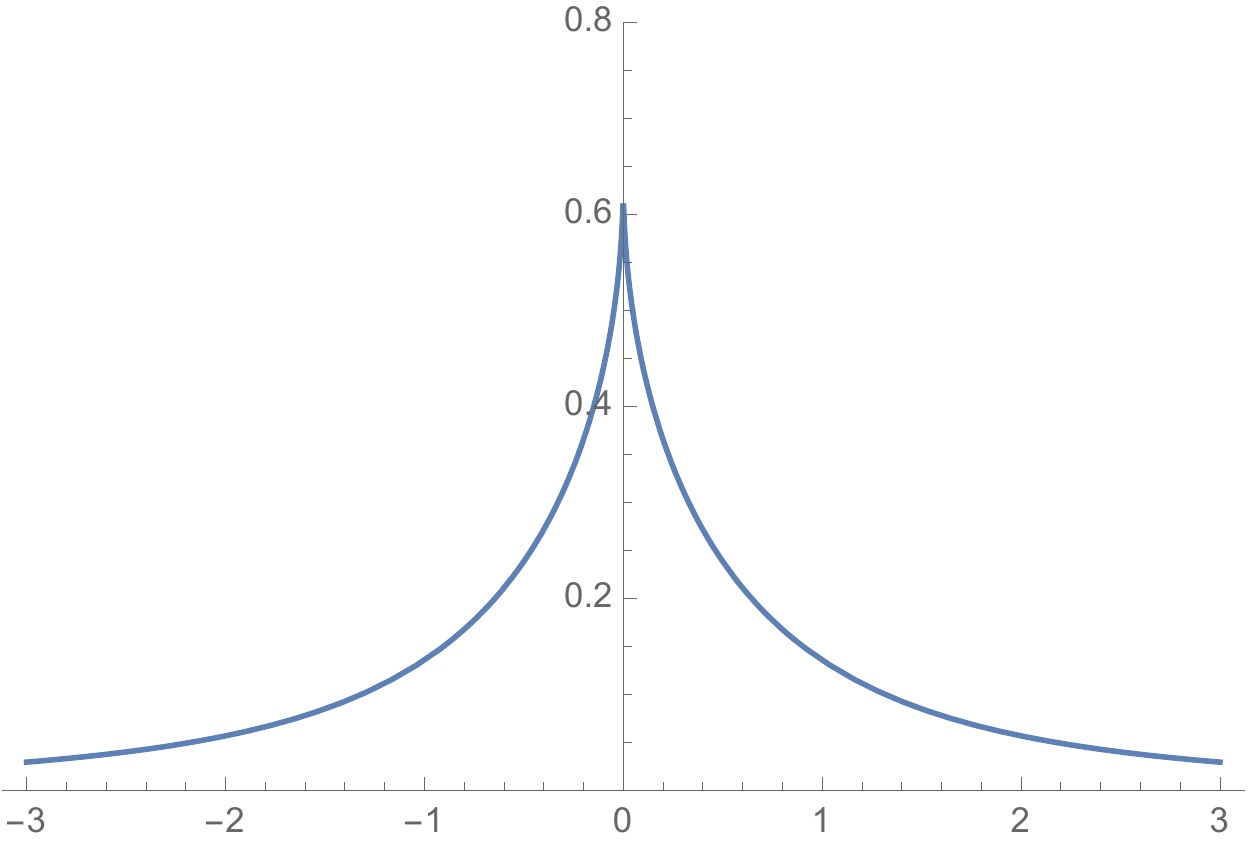}
 \end{center}
 \caption{Probability density function of $\gamma_{1,(1/2)^2}$}
 \label{fig:two}
\end{figure}

\end{ex}

\subsubsection{Case of the free Meixner distribution}

We define ${\bf FM}_{a,b}$ as the free Meixner distribution:
\begin{align*}
{\bf FM}_{a,b}(dx):= \frac{\sqrt{4(1+b)-(x-a)^2}}{2\pi (bx^2+ax+1)}\mathbf{1}_{[a-2\sqrt{1+b},a+2\sqrt{1+b}]}(x)dx + \text{ 0, 1, or 2 atoms},
\end{align*}
for $a\in\mathbb{R}$ and $b\ge -1$. 
If $b=-1$, then
\begin{align*}
{\bf FM}_{a,-1}=\frac{1}{2}\left(1+\frac{a}{\sqrt{1+a^2}}\right)\delta_{\frac{1}{2}(a-\sqrt{4+a^2})}+\frac{1}{2}\left(1-\frac{a}{\sqrt{1+a^2}}\right)\delta_{\frac{1}{2}(a+\sqrt{4+a^2})}
\end{align*}
is not FQID for all $a \in \R$ by Remark \ref{Bernoulli}. Moreover, it follows from \cite[Proposition 2.1]{BB} that, for $-1<b<0$, 
\begin{align*}
{\bf FM}_{a,b}=\mathbf{D}_{\sqrt{|b|}}\left( {\bf FM}_{\frac{a}{\sqrt{|b|}},-1}^{\boxplus t}\right), \qquad t=-\frac{1}{b}.
\end{align*}
Therefore, in view of Proposition 3.3(ii), ${\bf FM}_{a,b}$ is not FQID for $-1\le b<0$.
Note that ${\bf FM}_{a,b}\in {\rm ID}(\boxplus)$ if and only if $a\in \R$ and $b\ge 0$ (see \cite[Theorem 3.2]{SY}). Furthermore, the variance of ${\bf FM}_{0,b}$ is equal to 1 for all $b\ge 0$ (in particular, we have ${\bf FM}_{0,0}={\bf S}$). Using \cite[Theorem 1.2]{ATV}, for any $b\ge 0$, there exists an FQID distribution $\rho({\bf FM}_{0,b})$ (in \eqref{FQIDex}) such that 
\begin{align*}
{\bf FM}_{0,b}\boxplus \rho({\bf FM}_{0,b})=\mathbf{C}_{2\sqrt{2}}.
\end{align*}
We summarize the above result as follows.

\begin{thm}\label{ex:FMeixner}
For any $b\ge  0$, there exists some $\rho({\bf FM}_{0,b})$ such that ${\bf FM}_{0,b}\boxplus \rho({\bf FM}_{0,b})=\mathbf{C}_{2\sqrt{2}}$.  For $b\ge0$, the free Gaussian part of the measure $\rho({\bf FM}_{0,b})$ is equal to $-1$, and so the measure $\rho({\bf FM}_{0,b})$ is not FID. Furthermore, for $b>0$, the free quasi-L\'{e}vy measure $\nu_b$ of the probability measure $\rho({\bf FM}_{0,b})$ is given by
\begin{align*}
\nu_b(dx)=\begin{cases}
\frac{1}{\pi x^2}\left( 2\sqrt{2}- \frac{\sqrt{4b-x^2}}{2b } \right) dx, & x\in [-2\sqrt{b},2\sqrt{b}]\setminus \{0\}\\
\frac{2\sqrt{2}}{\pi x^2}dx, & x\in \mathbb{R}\setminus [-2\sqrt{b},2\sqrt{b}].
\end{cases}
\end{align*}
In particular, we can state the following:
\begin{enumerate}[\rm(1)]
\item If $b\ge 1/8$, then the density function $\frac{d\nu_b}{dx}(x)$ is always positive;

\item If $0<b<1/8$, then there exists some $x\in [-2\sqrt{b},2\sqrt{b}]\setminus\{0\}$ such that the density function $\frac{d\nu_b}{dx}(x)$ of the measure $\nu_b$ is negative. In this case, we have $\nu_b(\R)=-\infty$. For example, we show the density function of the free quasi-L\'{e}vy measure $\nu_{1/16}$ in Figure \ref{fig:three}; 

\item If $b=0$, then the free quasi-L\'{e}vy measure of $\rho({\bf FM}_{0,b})$ coincides with the free L\'{e}vy measure of the Cauchy distribution ${\bf C}_{2\sqrt{2}}$.
\end{enumerate}
\begin{figure}[htbp]
 \centering
  \includegraphics[width=100mm]{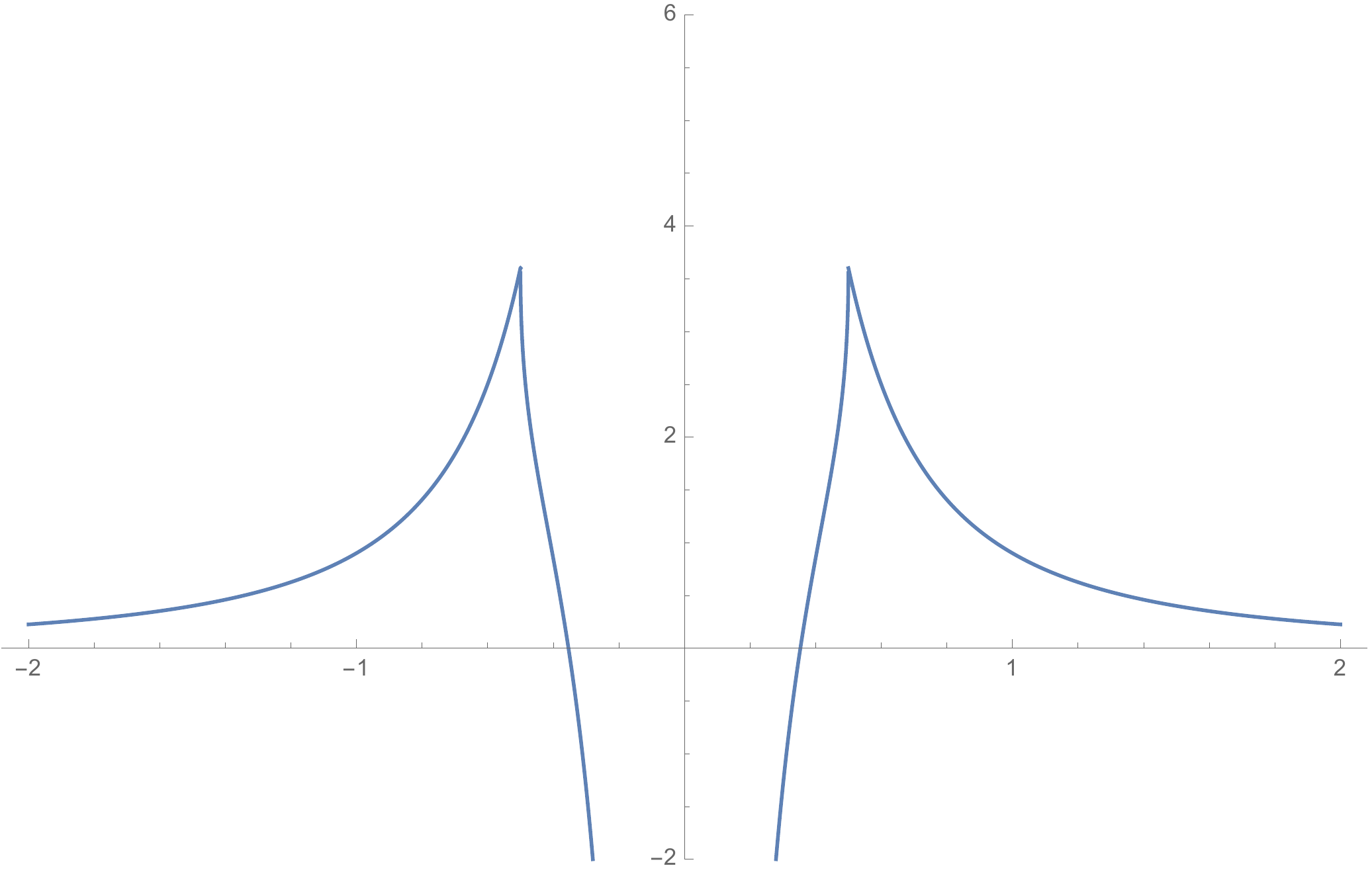}
 \caption{Density function of the free quasi-L\'{e}vy measure $\nu_{1/16}$}
 \label{fig:three}
\end{figure}
\end{thm}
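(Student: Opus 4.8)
The plan is to realise $\rho({\bf FM}_{0,b})$ as a free Cauchy--deconvolution, compute its Voiculescu transform in closed form, and then read off the free characteristic triplet by applying the Stieltjes inversion to the Pick--Nevanlinna representation of $\varphi$. The existence is immediate: ${\bf FM}_{0,b}$ has variance $1$ and, for $b\ge 0$, is FID with compact support $[-2\sqrt{1+b},2\sqrt{1+b}]$, so by \cite[Theorem 1.2]{ATV} the measure $\rho({\bf FM}_{0,b})=\mathbf{C}_{2\sqrt{2}}\boxminus{\bf FM}_{0,b}$ is a probability measure satisfying ${\bf FM}_{0,b}\boxplus\rho({\bf FM}_{0,b})=\mathbf{C}_{2\sqrt{2}}$, and it is FQID by Proposition \ref{basic}(iv). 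On $\C^+$ we then have $\varphi_{\rho({\bf FM}_{0,b})}=\varphi_{\mathbf{C}_{2\sqrt{2}}}-\varphi_{{\bf FM}_{0,b}}$.

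First I would compute $\varphi_{{\bf FM}_{0,b}}$ explicitly. The Cauchy transform of ${\bf FM}_{0,b}$ solves $(bz^2+1)G^2-(1+2b)zG+(1+b)=0$, and the root with $G(z)\sim 1/z$ is $G_{{\bf FM}_{0,b}}(z)=\frac{(1+2b)z-\sqrt{z^2-4(1+b)}}{2(bz^2+1)}$, which reproduces the stated density by Stieltjes inversion. Solving the same quadratic for the argument in terms of $w=F_{{\bf FM}_{0,b}}(\zeta)=1/G_{{\bf FM}_{0,b}}(\zeta)$ and keeping the branch with $F^{-1}(w)\sim w$ yields $F_{{\bf FM}_{0,b}}^{-1}(w)=\frac{(1+2b)w-\sqrt{w^2-4b}}{2b}$, hence $\varphi_{{\bf FM}_{0,b}}(z)=\frac{z-\sqrt{z^2-4b}}{2b}$; together with $\varphi_{\mathbf{C}_{2\sqrt{2}}}(z)=-2\sqrt{2}\,i$ this gives $\varphi_{\rho({\bf FM}_{0,b})}(z)=-2\sqrt{2}\,i-\frac{z-\sqrt{z^2-4b}}{2b}$. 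Here the branch of $\sqrt{z^2-4b}$ must be the one mapping $\C^+$ into $\C^+$ (so that $\Im\varphi_{{\bf FM}_{0,b}}\le 0$, matching that ${\bf FM}_{0,b}$ is FID); on the real axis this gives $\sqrt{x^2-4b}\to i\sqrt{4b-x^2}$ for $|x|<2\sqrt{b}$.

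Next I would extract the triplet. Writing $\varphi_\rho(z)=b_\rho+\int\frac{1+xz}{z-x}\tau_\rho(dx)$ and using $\frac{1+xz}{z-x}=x+\frac{1+x^2}{z-x}$, the measure $\sigma_\rho(dx):=(1+x^2)\tau_\rho(dx)$ is recovered from $-\frac1\pi\Im\varphi_\rho(x+i0)$. With the branch above, $\Im\varphi_\rho(x+i0)=-2\sqrt2+\frac{\sqrt{4b-x^2}}{2b}$ on $(-2\sqrt b,2\sqrt b)$ and $-2\sqrt2$ outside, so that $\nu_b(dx)=\sigma_\rho(dx)/x^2$ is exactly the claimed measure. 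For the free Gaussian part I would pass to $R_\rho(z)=z\varphi_\rho(1/z)=-2\sqrt{2}\,iz-\frac{1-\sqrt{1-4bz^2}}{2b}$, whose coefficient of $z^2$ is $-1$. \textbf{This is the step I expect to be the main obstacle}: because $\nu_b$ carries a $1/x^2$ singularity at the origin, the $z^2$--contribution of the L\'evy integral in \eqref{eq:R} and the genuine Gaussian atom $a=\tau_\rho(\{0\})$ must be disentangled with care; it is this interplay, together with the correct branch, that fixes the Gaussian part and thereby, via Proposition \ref{prop:Voiculescu transform}, the (non-)FID property of $\rho({\bf FM}_{0,b})$.

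The remaining claims are then elementary. On $(-2\sqrt b,2\sqrt b)$ one has $\frac{d\nu_b}{dx}(x)=\frac1{\pi x^2}\big(2\sqrt2-\frac{\sqrt{4b-x^2}}{2b}\big)\ge 0$ if and only if $x^2\ge 4b(1-8b)$, which gives (1) when $b\ge 1/8$ (the right-hand side is nonpositive) and (2) when $0<b<1/8$ (negativity precisely on $|x|<2\sqrt{b(1-8b)}$). For $\nu_b(\R)=-\infty$ in the latter range, near the origin $\frac{d\nu_b}{dx}(x)\sim\frac1{\pi x^2}\big(2\sqrt2-\frac1{\sqrt b}\big)$ with $2\sqrt2-\frac1{\sqrt b}<0$, a non-integrable negative singularity, whereas $\nu_b^+$ is finite; hence the total mass is $-\infty$. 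Finally, for (3) letting $b\to 0$ collapses $[-2\sqrt b,2\sqrt b]$ to $\{0\}$ and leaves $\frac{2\sqrt2}{\pi x^2}\,dx$, the free L\'evy measure of $\mathbf{C}_{2\sqrt2}$.
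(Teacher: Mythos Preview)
The paper states this theorem without proof (the text preceding it merely invokes \cite[Theorem~1.2]{ATV} for existence and then says ``We summarize the above result as follows''), so there is no argument to compare against. Your overall scheme---realise $\rho({\bf FM}_{0,b})$ as $\mathbf{C}_{2\sqrt2}\boxminus{\bf FM}_{0,b}$, compute $\varphi_{{\bf FM}_{0,b}}(z)=\frac{z-\sqrt{z^2-4b}}{2b}$, recover $\nu_b$ by Stieltjes inversion of $\varphi_\rho$, and then do the elementary sign analysis---is correct and yields the stated $\nu_b$ together with items (1)--(3). Your argument for $\nu_b(\R)=-\infty$ when $0<b<1/8$ is also fine: $\nu_b^{+}$ is a finite measure while $\nu_b^{-}$ has a non-integrable $1/x^2$ singularity at the origin.

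The gap you flag is genuine, and in fact it cannot be resolved in favour of the stated claim. The coefficient of $z^2$ in $R_\rho$ is \emph{not} the free Gaussian part $a=\tau_\rho(\{0\})$ once the L\'evy measure fails to have finite second moment, and here $\int x^2\,\nu_b(dx)$ diverges. Computing the Gaussian part correctly via $\tau_\rho=\tau_{\mathbf{C}_{2\sqrt2}}-\tau_{{\bf FM}_{0,b}}$ one finds, for $b>0$, that both summands are absolutely continuous (indeed $\tau_{{\bf FM}_{0,b}}(dx)=\frac{\sqrt{4b-x^2}}{2\pi b(1+x^2)}\mathbf{1}_{[-2\sqrt b,\,2\sqrt b]}(x)\,dx$), so $\tau_\rho(\{0\})=0$; only at $b=0$, where ${\bf FM}_{0,0}=\mathbf{S}$ has triplet $(1,0,0)$, is the Gaussian part $-1$. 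This matters for the non-FID assertion: since $\Im\varphi_\rho(z)=-2\sqrt2-\Im\varphi_{{\bf FM}_{0,b}}(z)$ and $\inf_{\C^+}\Im\varphi_{{\bf FM}_{0,b}}=-1/\sqrt b$ (attained on the boundary at $0$), one has $\sup_{\C^+}\Im\varphi_\rho=-2\sqrt2+1/\sqrt b$, which is $\le 0$ exactly when $b\ge 1/8$. Thus $\rho({\bf FM}_{0,b})$ is in fact FID for $b\ge 1/8$, consistent with your own conclusion in (1) that $\nu_b\ge0$ there together with the vanishing Gaussian part. In other words, the obstacle you identify exposes an inaccuracy in the theorem as stated; your method is the right one, and carrying it through shows that the ``Gaussian part $=-1$'' and ``not FID'' claims hold only for $b=0$ and $0\le b<1/8$, respectively.
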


\subsection{Free deconvolution of the semicircle law and failure of Cram\'{e}r's theorem in free probability}

For $t>0$, denote
\[
R_{t}^{\pm}(z):=z^2\pm tz^4\sum_{n=0}^{\infty}t^n z^{2n}=\frac{z^2-tz^4\pm tz^4}{1-tz^2}.
\]
Observe that
\[
R_{t}^{+}(z)
=\frac{z^2}{1-tz^2}
=\frac{1}{2\sqrt{t}}\cdot\frac{z\sqrt{t}}{1-z\sqrt{t}}+\frac{1}{2\sqrt{t}}\cdot\frac{-z\sqrt{t}}{1+z\sqrt{t}}.
\]
This means that $R_{t}^{+}(z)$ is the $R$-transform of a probability distribution $\mu_{+}(t)$,
which can be expressed as
\[
\mu_{+}(t)=\mathbf{MP}\left(\sqrt{t},\frac{1}{2\sqrt{t}}\right)\boxplus
\mathbf{MP}\left(-\sqrt{t},\frac{1}{2\sqrt{t}}\right).
\]
In particular, $\mu_{+}(t)$ is FID for all $t>0$.

For the function $R_{t}^{-}(z)$, we apply a result by Bercovici and Voiculescu \cite[Theorem 2]{BV},
which implies that if $t>0$ is sufficiently small, say $0<t<t_0$,
then $R_{t}^{-}(z)$ is the $R$-transform of some compactly supported
probability distribution $\mu_{-}(t)$ on $\mathbb{R}$. Then, for $0<t<t_0$, we have
$R_{\mu_{+}(t)}(z)+R_{\mu_{-}(t)}=2z^2$, which means that
$\mu_{+}(t)\boxplus\mu_{-}(t)=\mathbf{S}(0,2)$, and consequently,
\[
\mu_{-}(t)=\mathbf{S}(0,2)\boxminus\mu_{+}(t).
\]
In view of \cite[Remark 5]{BV}, the measure $\mu_{-}(t)$ cannot be FID,
which is also a consequence of the fact that the fourth free cumulant of $\mu_{-}(t)$ is $-t<0$.

This example illustrates the observation of Bercovici and Voiculescu \cite{BV}
that the free analog of Cram\'{e}r's decomposition theorem \cite[Theorem XV.8.1]{Fel}
(that the Gaussian distribution cannot be expressed as the classical convolution
of two non-Gaussian distributions) is not true.

\subsection{Free deconvolution with the Marchenko-Pastur law}
\subsubsection{Case of the semicircle law and the Marchenko-Pastur law}\label{subse:smp}

In view of  \cite[Proposition~5.1]{Mlo2021}, for every $u,x\in\mathbb{R}$, $u\ne0$,
there exists a probability distribution $\mu$ such that
\[
R_{\mu}(z)= x^3 u^2 z^2+(1-x)^3\frac{uz}{1-uz}.
\]
This means, that
\[
\left(u^2 x^3,(1-x)^3\delta_{u},0\right)
\]
is a free characteristic triplet for the measure $\mu$.
If $x<0$ then we have
\[
\mu=\mathbf{MP}\left(u,(1-x)^3\right)\boxminus\mathbf{S}\left(0,-x^3 u^2\right),
\]
while for $x>1$
\[
\mu=\mathbf{S}\left(0,x^3u^2\right)\boxminus\mathbf{MP}\left(u,(x-1)^3 u^2\right).
\]

If $x<0$, then the Gaussian part $u^2x^3$ is negative, and therefore $(u^2x^3, (1-x)^3\delta_u,0)$ is not a classical characteristic triplet (see \cite[Lemma 2.7]{LPS}). The following lemma, which was proved within \cite[Example~2.9]{LPS}, implies that
if $x>1$ then $\left(u^2 x^3,(1-x)^3\delta_{u},0\right)$
is not a classical characteristic triplet.

\begin{lem}\label{lem:lpsex29}
Assume that $\nu$ is a quasi-L\'{e}vy measure, with the positive and negative part $\nu^{+},\nu^{-}$. If
$\nu^{-}\ne0$ and if either $\nu^{+}=0$ or $\mathrm{supp}\,\nu^{+}$ is a one-point set, 
then $(a,\nu,\gamma)$ is not the characteristic triplet of a QID distribution
for any $a,\gamma\in\mathbb{R}$.
\end{lem}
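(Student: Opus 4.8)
The plan is to argue by contradiction, using the factorization description of QID distributions together with the classical rigidity theorems of Cram\'er and Raikov on convolution factors of Gaussian and Poisson laws; this essentially reproduces the argument implicit in \cite[Example 2.9]{LPS}. Suppose that for some $a,\gamma\in\R$ the triplet $(a,\nu,\gamma)$ is the characteristic triplet of a QID distribution $\mu$. Since every QID distribution has $a\ge0$, the case $a<0$ is excluded outright, so I assume $a\ge0$. Because $\nu^{+}$ and $\nu^{-}$ are genuine (nonnegative) L\'evy measures and $a\ge0$, there exist $\mu_{1},\mu_{2}\in\mathrm{ID}(\ast)$ with characteristic triplets $(0,\nu^{-},0)$ and $(a,\nu^{+},\gamma)$, respectively. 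Adding $\log\widehat{\mu}$ (in the form \eqref{q-LK-eq}) to $\log\widehat{\mu_{1}}$ gives exactly $\log\widehat{\mu_{2}}$, whence $\widehat{\mu}\,\widehat{\mu_{1}}=\widehat{\mu_{2}}$ and therefore $\mu\ast\mu_{1}=\mu_{2}$ as probability measures. Thus $\mu_{1}$ is a convolution factor of $\mu_{2}$, and $\mu_{1}$ is nondegenerate since its L\'evy measure $\nu^{-}$ is nonzero.

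I would first dispose of the case $\nu^{+}=0$. If moreover $a=0$, then $\mu_{2}=\delta_{\gamma}$, so $|\widehat{\mu}(z)|\,|\widehat{\mu_{1}}(z)|=1$ for all $z\in\R$; as both factors are bounded by $1$, both equal $1$ identically, forcing $\mu_{1}=\delta_{s}$ and hence $\nu^{-}=0$, a contradiction. If instead $a>0$, then $\mu_{2}=\mathbf{N}(\gamma,a)$ is Gaussian, and by Cram\'er's theorem \cite[Theorem XV.8.1]{Fel} every convolution factor of a Gaussian is Gaussian; thus $\mu_{1}$ is Gaussian and its L\'evy measure vanishes, again contradicting $\nu^{-}\ne0$.

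The main case is $\nu^{+}=c\,\delta_{x_{0}}$ with $c>0$ and $x_{0}\ne0$ (note $x_{0}\ne0$ because $\nu^{+}(\{0\})=0$). Here $\mu_{2}$ is the convolution of $\mathbf{N}(\gamma,a)$ with a compound Poisson law whose jumps lie in $\{x_{0}\}$. The crucial input is the combined Cram\'er--Raikov factorization theorem: every convolution factor of such a Gaussian--Poisson law is again of the same form, the Gaussian variances and the Poisson intensities splitting additively. Applying this to the factor $\mu_{1}$, I conclude that its L\'evy measure $\nu^{-}$ is supported on $\{x_{0}\}$, i.e. $\nu^{-}=c_{1}\delta_{x_{0}}$. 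But the Hahn--Jordan decomposition makes the supports of $\nu^{+}$ and $\nu^{-}$ disjoint, so $\nu^{-}(\{x_{0}\})=0$ and hence $\nu^{-}=0$, the desired contradiction. Combining the three cases yields the lemma.

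The step I expect to be the main obstacle is precisely this single-atom case: the elementary necessary condition $|\widehat{\mu}|\le1$ is genuinely insufficient (for $a$ large one checks that $\log|\widehat{\mu}(z)|\le\tfrac12 z^{2}(c\,x_{0}^{2}-a)<0$ for all $z\ne0$, so no obstruction is detected), and one must instead invoke the rigidity of the factorization of Gaussian--Poisson laws, namely Raikov's theorem on Poisson factors together with Cram\'er's theorem for the Gaussian part. Making these classical results applicable requires the preliminary check that $\widehat{\mu}$ and $\widehat{\mu_{1}}$ extend to zero-free entire characteristic functions; this follows because $\widehat{\mu_{2}}$ is entire (its L\'evy measure $c\,\delta_{x_{0}}$ is finite) and a convolution factor inherits all exponential moments, while zero-freeness is forced by $\widehat{\mu}\,\widehat{\mu_{1}}=\widehat{\mu_{2}}$ having no zeros.
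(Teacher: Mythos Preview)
Your proof is correct. The paper does not give its own proof of this lemma but simply defers to \cite[Example~2.9]{LPS}, and your argument via the Cram\'er and Raikov--Linnik factorization theorems is precisely the one carried out there: write $\mu\ast\mu_1=\mu_2$ with $\mu_2$ Gaussian, respectively Gaussian convolved with a one-point Poisson, and force the L\'evy measure of the factor $\mu_1$ to vanish or to sit at $x_0$.

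Two cosmetic remarks. First, Hahn--Jordan gives mutual singularity of $\nu^{+}$ and $\nu^{-}$, hence $\nu^{-}(\{x_0\})=0$; this is what you actually use, whereas ``disjoint supports'' is a slightly stronger (and in general false) phrasing. Second, the observations in your final paragraph about entire extension and zero-freeness of $\widehat{\mu}$, $\widehat{\mu_1}$ are part of the \emph{proofs} of the Cram\'er and Linnik theorems, not hypotheses you need to check before invoking those theorems as black boxes, so that paragraph can safely be omitted.
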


\subsubsection{Case of two Marchenko-Pastur laws}\label{subse:twomp}

Assume that $u,v\in\mathbb{R}\setminus\{0\}$, $u<v$.
It was proven in \cite[Proposition~5.3]{Mlo2021} that for every $x\in\mathbb{R}$ there exists
a compactly supported probability measure $\mu$ such that
\[
R_{\mu}(z)=\frac{(u-x)^3}{u^2(u-v)}\cdot\frac{uz}{1-uz}+\frac{(v-x)^3}{v^2(v-u)}\cdot\frac{vz}{1-vz}.
\]
Putting
\begin{equation}\label{eq:twompab}
a(x):=\frac{(u-x)^3}{u^2(u-v)},\qquad
b(x):=\frac{(v-x)^3}{v^2(v-u)},
\end{equation}
we have $a(x)<0$, $b(x)>0$ for $x<u$, $a(x)$, $b(x)>0$ for $u<x<v$ and $a(x)>0$, $b(x)<0$ for $x>v$.
Therefore
\[
\mu=\mathbf{MP}\big(v,b(x)\big)\boxminus\mathbf{MP}\big(u,-a(x)\big)
\]
for $x<u$ and
\[
\mu=\mathbf{MP}\big(u,a(x)\big)\boxminus\mathbf{MP}\big(v,-b(x)\big)
\]
for $x>v$.
In another words, the free characteristic triplet for $\mu$ is $(0,\nu,0)$, where
\begin{equation}\label{eq:twompnu}
\nu:=a(x)\cdot\delta_{u}+b(x)\cdot\delta_{v}.
\end{equation}
Note that if $u+v\ne0$ then the sum of weights:
\[
a(x)+b(x)=\frac{u^2 v^2-3uv x^2+(u+v)x^3}{u^2 v^2}
\]
can be negative.

Again, by Lemma~\ref{lem:lpsex29}, if either $x<u$ or $x>u$, then $(0,\nu,0)$, with $\nu$ defined by \eqref{eq:twompab}, \eqref{eq:twompnu}, is not a classical characteristic triplet.






\subsubsection{Case of several Marchenko-Pastur laws}

Suppose that $u_1,\cdots,u_n\in\mathbb{R}\setminus\{0\}$ are distinct and $n\ge2$.
In view of \cite[Proposition~3.1 and Corollary~2.7]{LM}, there is a probability distribution
$\mu$ on $\R$ such that its $R$-transform is equal to
\begin{equation}\label{eq1}
R_{\mu}(z)=\frac{1-\prod_{i=1}^{n}(1-u_i z)}{\prod_{i=1}^{n}(1-u_i z)}.
\end{equation}
If we set
\begin{equation*}
t_k:=\frac{u_k^{n-1}}{\prod_{i\ne k}(u_k-u_i)},
\end{equation*}
then
\begin{equation}\label{eq3}
R_{\mu}(z)=\sum_{k=1}^{n} t_k\cdot \frac{u_k z}{1-u_k z}
\end{equation}
(see \cite[Lemma~3.5]{LM}).

In the case of $n=2$, if $u_1<u_2<0$, then $t_1>0>t_2$; if $u_1<0<u_2$, then $t_1,t_2>0$;
and if $0<u_1<u_2$, then $t_1<0<t_2$ (see \cite{LM} for details).
If $n\ge3$, then $t_i>0$ and $t_j<0$ for some $1\le i,j\le n$ (see \cite[Lemma~3.5]{LM}).

Now, assume that either $n=2$, $u_1 u_2>0$, or $n\ge3$. We observe that $\mu$ is FQID. Set
\[
K_{+}:=\{k:t_k>0\}:=\{k_1',\cdots,k_p'\},\qquad
K_{-}:=\{k:t_k<0\}:=\{k_1'',\cdots,k_q''\}
\]
and
\[
\mu_{+}:=\mu_{k_1'}\boxplus\cdots\boxplus\mu_{k_p'},\qquad
\mu_{-}:=\mu_{k_1''}\boxplus\cdots\boxplus\mu_{k_q''},
\]
where
\[
\mu_k:=\left\{\begin{array}{ll}
\mathbf{MP}(u_k,t_k)&\hbox{if $k\in K_+$,}\\
\mathbf{MP}(u_k,-t_k)&\hbox{if $k \in K_-$.}\\
\end{array}\right.
\]
Then, by (\ref{eq3}), we have the following deconvolution of $\mu$:
\[
\mu\boxplus\mu_{-}=\mu_{+};
\]
the characteristic triplet for $\mu$ is $(0,\nu,0)$, where
$\nu:=\sum_{k=1}^{n}t_k\cdot\delta_{u_k}$.
Note that $\nu(\mathbb{R})=1$.
Indeed, taking the limit $|z|\to\infty$ in (\ref{eq1})
and (\ref{eq3}), we find that $t_1+\cdots+t_n=1$.
For example, if $n=4$ and $u_k=k$, then
\[
t_1=-1/6,\quad
t_2=4,\quad
t_3=-27/2,\quad
t_4=32/3.
\]
One can check that if $n$ is even and the set $\{u_1,\cdots,u_n\}$ is symmetric,
i.e., $u_k=-u_{n-k+1}$, $1\le k\le n$, then $\nu$ is symmetric: $t_k=t_{n-k+1}$.

\subsection{Classical characteristic triplets which are not free triplets}

Take $\mu_{a}:=(1-a)\delta_{0}+a\delta_{1}$, with $0<a<1$, $a\ne1/2$.
Then $\mu_{a}$ is QID by \cite[Theorem~3.9]{LPS}.
More precisely, if $0<a<1/2$ then the characteristic triplet for $\mu_{a}$ is
$\tau_{a}:=(1,\nu_{a},0)$, where
\[
\nu_{a}=-\sum_{m=1}^{\infty}\frac{1}{m}\left(\frac{a}{a-1}\right)^{m}\delta_{m},
\]
while if $1/2<a<1$ then the characteristic triplet is $\tau_{a}:=(0,\nu_{a},0)$, with
\[
\nu_{a}=-\sum_{m=1}^{\infty}\frac{1}{m}\left(\frac{a-1}{a}\right)^{m}\delta_{-m}.
\]

\begin{prop}\label{prop:classtau}
If $0<a<1$, $a\ne1/2$, then $\tau_{a}$ is not a free characteristic triplet.
\end{prop}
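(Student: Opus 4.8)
The plan is to argue by contradiction, exploiting the pole structure of the candidate Voiculescu transform together with Lemma \ref{lem:FQIDatom}. Suppose $\tau_a$ were a free characteristic triplet and let $\mu$ be an FQID distribution realizing it. By the reflection symmetry $\mathbf{D}_{-1}$ (Proposition \ref{basic}(iii)) together with the uniqueness in Proposition \ref{uniquegenerating}, it suffices to treat $0<a<1/2$; the case $1/2<a<1$ is identical after $x\mapsto -x$, the free L\'evy measure then being supported on the negative integers with the same interlaced sign pattern, and the Gaussian part will play no role. Writing $r:=a/(1-a)\in(0,1)$ and using \eqref{Triplet}, the free characteristic pair $(b,\tau)$ of $\mu$ has $\tau(\{m\})=\frac{m(-1)^{m+1}r^m}{1+m^2}$ at each integer $m\geq 1$ (and possibly an atom $\tau(\{0\})=a\geq 0$), so that $\varphi_\mu(z)=b+\int_\R\frac{1+xz}{z-x}\tau(dx)$ extends meromorphically across $\R$ with simple poles at the positive integers and residues $(1+m^2)\tau(\{m\})=m(-1)^{m+1}r^m$, positive at odd $m$ and negative at even $m$.

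Next I would set $u_\mu(z):=z+\varphi_\mu(z)$ and invoke Lemma \ref{lem:FQIDatom}: the map $F_\mu:\C^+\to\Omega_\mu$ is a bijection with inverse $u_\mu|_{\Omega_\mu}$, where $\Omega_\mu$ is the component of $\{z\in\C^+:\Im(z+\varphi_\mu(z))>0\}$ containing $iy$ for large $y$. A direct computation gives $\Im\frac{1+xz}{z-x}=-\frac{(\Im z)(1+x^2)}{|z-x|^2}$, hence $\Im u_\mu(z)=(\Im z)(1-h(z))$ with $h(z):=\int_\R\frac{1+x^2}{|z-x|^2}\tau(dx)$, so that $\{\Im u_\mu>0\}=\{h<1\}$ and $\Omega_\mu$ is its principal component. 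The geometry of $\{h<1\}$ is the crux. Since $h(x+iy)\to 0$ uniformly in $x$ as $y\to\infty$, the set $\{h<1\}$ contains a half-plane $\{\Im z>Y_0\}$; the exceptional set $\{h\geq 1\}$ can only occur near the poles with positive residue (the odd integers, and $0$ if $a>0$), each such ``finger'' having height controlled by $\sqrt{m r^m}\to 0$. In contrast, near an even integer $m_0$ the negative residue forces $h\to-\infty$, producing a ``tongue'' of $\{h<1\}$ reaching the real axis. I would then show that for all sufficiently large even $m_0$ the two flanking odd fingers at $m_0\pm 1$ are so short that the tongue at $m_0$ connects to $\{\Im z>Y_0\}$; consequently a punctured half-disk $D^+=\{0<|z-m_0|<\varepsilon,\ \Im z>0\}$ lies inside the principal component $\Omega_\mu$.

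Finally I would derive the contradiction from non-injectivity of $u_\mu|_{\Omega_\mu}$. On $D^+$ one has $u_\mu(z)=\frac{(1+m_0^2)\tau(\{m_0\})}{z-m_0}+O(1)$ with $(1+m_0^2)\tau(\{m_0\})<0$, so the leading M\"obius term maps the upper half-disk onto the far region $\{w\in\C^+:|w|>C/\varepsilon\}$; a Rouch\'e argument absorbs the bounded remainder and shows that $u_\mu$ already takes every $w_0\in\C^+$ with $|w_0|$ large at some point of $D^+$. On the other hand, near $i\infty$ we have $u_\mu(z)=z+O(1)$, so $u_\mu$ also takes each such $w_0$ at a point of $\Omega_\mu$ near $i\infty$, disjoint from $D^+$. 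Thus $u_\mu|_{\Omega_\mu}$ has at least two preimages of some $w_0\in\C^+$, contradicting the bijectivity supplied by Lemma \ref{lem:FQIDatom}. Hence no FQID distribution can have the free characteristic pair attached to $\tau_a$, i.e.\ $\tau_a$ is not a free characteristic triplet.

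The main obstacle is the connectivity claim of the second paragraph: that a tongue of $\{h<1\}$ at an even integer genuinely belongs to the \emph{principal} component $\Omega_\mu$ rather than being pinched off by the adjacent odd-integer fingers, since this is exactly what forces the two exhibited preimages to lie in the same $\Omega_\mu$. I expect to handle it by taking $m_0$ large, where the flanking residues $(m_0\pm 1)r^{m_0\pm 1}$ are negligible, and by bounding, uniformly in $a$, the far-field contribution of the remaining poles to $h$ along a vertical segment above $m_0$; the geometric decay of the residues is precisely what should make this estimate go through.
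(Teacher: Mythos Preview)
Your complex-analytic strategy via Lemma~\ref{lem:FQIDatom} is entirely different from the paper's proof and, while the idea is sound, it leaves real work undone. The connectivity step you flag is indeed the crux; it can be pushed through by checking that along the vertical line $\Re z=m_0$ one has $h(m_0+iy)=-m_0 r^{m_0}/y^2+O(m_0 r^{m_0-1})+O(1/m_0^{2})<1$ for all $y>0$ once $m_0$ is large, so the tongue reaches the half-plane $\{\Im z>Y_0\}\subset\Omega_\mu$. There is a second soft spot you pass over: the Rouch\'e count needs a closed contour, but $\partial D^+$ meets $\R$ at the pole $m_0$, so you must either lift the diameter to $\{\Im z=\eta\}$ and control $u_\mu$ there, or recast the argument as a fixed-point problem for $\zeta\mapsto -c/(w_0-g(m_0+\zeta))$ in a small half-disk. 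Both issues are fixable, but the resulting proof runs to a page or more of estimates.

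The paper's argument, by contrast, is a three-line moment computation. The point is that the cumulant sequence is determined by the triplet alone, and the formulas $\kappa_n=\int x^n\,\nu(dx)$ (for $n\ge3$) agree in the classical and free L\'evy--Khintchine expansions; hence if $\tau_a$ were a free characteristic triplet for some $\widetilde{\mu}_a$, the classical cumulants $r_n$ of $\mu_a=(1-a)\delta_0+a\delta_1$, read off from $\log(1-a+ae^{z})$, would be the \emph{free} cumulants of $\widetilde{\mu}_a$. The free moment--cumulant formula then yields $s_0=1$, $s_1=s_2=s_3=a$, $s_4=a-a^2+2a^3-a^4$, and one computes $\det(s_{i+j})_{i,j=0}^{2}=a^3(a-1)^3<0$, contradicting positive definiteness of the Hankel matrix of a probability measure. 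This bypasses all the geometry of $\Omega_\mu$; your route would in principle apply to triplets whose moments are not explicit, but for the present proposition the Hankel test is decisively simpler.
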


\begin{proof}
The characteristic function of $\mu_{a}$ is given by
$$
\widehat{\mu_a}(z)=1-a+a\exp(iz).
$$
Since
\begin{align*}
\log\widehat{\mu_a}(-iz)&=\log\left(1-a+a\exp(z)\right)\\
&=az+\frac{1}{2}(a - a^2)z^2+\frac{1}{6}(a - 3 a^2 + 2 a^3)z^3+\frac{1}{24}(a - 7 a^2 + 12 a^3 - 6 a^4)z^4+\cdots,
\end{align*}
the classical cumulants of $\mu_{a}$ are
\[
r_1=a,\quad
r_2=a - a^2,\quad
r_3=a - 3 a^2 + 2 a^3,\quad
r_4=a - 7 a^2 + 12 a^3 - 6 a^4,\ldots.
\]

If $\tau_{a}$ was the free characteristic triplet for a probability distribution $\widetilde{\mu}_{a}$,
then $r_n$ were the free cumulants for $\widetilde{\mu}_{a}$.
This, by the moment-cumulant formula (see \cite[Proposition~11.4]{NiSpBook}), would imply, that the moments of $\widetilde{\mu}_{a}$ are
\[
s_0=1,\quad
s_1=s_2=s_3=a,\quad
s_4=a-a^2+2a^3-a^4,\ldots,
\]
which leads to a contradiction, because $\det(s_{i+j})_{i,j=0}^2=a^3(a-1)^3<0$.
\end{proof}

\section{An extension of the Bercovici-Pata bijection}
In the former section, we gave some examples, which are FQID, rather than FID. However their free characteristic triplet cannot be a classical characteristic triplet. That is, there does not exist probability measures whose classical characteristic triplets are same as their free characteristic triplets.

In this section, we show that we can extend the domain of the Bercovici-Pata bijection from an good example using P{\'o}yla's Theorem.

Define $\Phi$ as the set of such pairs $(c,\nu)$, that $c$ is a positive number
and $\nu$ is a symmetric L\'{e}vy measure, which satisfies
\begin{align}\label{integrable}
\int_{\mathbb{R}}\left(x^2\vee|x|\right)\nu(dx)<\infty.
\end{align}
Note that (\ref{integrable}) guarantees that the second moment $m_2(\nu)$ of $\nu$ is finite.
We define three subsets of $\Phi$, namely,
$\Phi^{+}$ is the set of these $(c,\nu)\in\Phi$ that the measure
\[
\frac{c}{\pi x^2}\,dx-\nu(dx)
\]
is nonnegative, and $\Phi^{*}$ (respectively, $\Phi^{\boxplus}$) will denote the set of such $(c,\nu)\in\Phi$
that
\[
\left(0,\frac{c}{\pi x^2}\,dx-\nu(dx),0\right)
\]
is a classical (respectively, free) characteristic triplet.
For $(c,\nu)\in\Phi^{*}$ (resp. $\Phi^{\boxplus}$) we denote by $\mu^{*}(c,\nu)$ (resp. $\mu^{\boxplus}(c,\nu)$)
the corresponding classical (resp. freely) quasi-infinitely divisible distribution.
We are going to show that the sets $\Phi^{*}\setminus\Phi^{+}$, $\Phi^{\boxplus}\setminus\Phi^{+}$
are nonempty, and, remarkably, that the set
$\Phi^{*}\cap\Phi^{\boxplus}\setminus\Phi^{+}$ is nonempty.
This will allow us to extend the Bercovici-Pata bijection, thus to answer
affirmatively a question raised by Bo\.zejko.

\begin{prop}\label{prop:convex}
If $(c,\nu)\in\Phi$ and if the function
\begin{equation}
\phi(z):=\exp\left(-c|z|+2\int_{0}^{+\infty}(1-\cos zx)\nu(dx)\right)
\end{equation}
is convex on $z\in[0,+\infty)$ then $(c,\nu)\in\Phi^{*}$ and $\phi$
is the characteristic function of the corresponding distribution $\mu^{*}(c,\nu)$. In particular, if
\begin{equation}\label{formula:phibis}
2\int_{0}^{\infty}x^2\cos zx\,\nu(dx)+\left(-c+2\int_{0}^{\infty}x\sin zx\,\nu(dx)\right)^2\ge0
\end{equation}
for all $z>0$ then $(c,\nu)\in\Phi^{*}$.
\end{prop}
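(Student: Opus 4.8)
The plan is to recognize that $\phi$ is precisely the candidate characteristic function attached to the triplet $\left(0,\sigma,0\right)$ with $\sigma(\rmd x):=\frac{c}{\pi x^2}\,\rmd x-\nu(\rmd x)$, and then to promote it to a genuine characteristic function by P\'olya's theorem. First I would record that $\sigma$ is a signed quasi-L\'evy-type measure: since $|\sigma|\le\frac{c}{\pi x^2}\,\rmd x+\nu$ with both $\int(1\wedge x^2)\frac{c}{\pi x^2}\,\rmd x<\infty$ and $\int(1\wedge x^2)\,\nu(\rmd x)<\infty$, condition (ii) of a quasi-L\'evy-type measure holds. Because $\sigma$ is symmetric, the drift term $\int izx\mathbf{1}_{[-1,1]}\,\sigma(\rmd x)$ vanishes and the exponent in \eqref{q-LK-eq} with $a=\gamma=0$ reduces to $\int_{\R}(\cos zx-1)\,\sigma(\rmd x)$. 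Using the identity $\int_0^\infty\frac{1-\cos zx}{x^2}\,\rmd x=\frac{\pi|z|}{2}$ (which is exactly why $\frac{c}{\pi x^2}\,\rmd x$ is the classical L\'evy measure of $\mathbf{C}_c$, whose characteristic function is $e^{-c|z|}$), this exponent equals $-c|z|+2\int_0^\infty(1-\cos zx)\,\nu(\rmd x)=\log\phi(z)$. Thus $(c,\nu)\in\Phi^{*}$ as soon as $\phi$ is shown to be the characteristic function of an honest probability measure, which will then be $\mu^{*}(c,\nu)$.

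To prove that $\phi$ is a characteristic function I would invoke P\'olya's theorem. The function $\phi$ is even, continuous, strictly positive, satisfies $\phi(0)=1$, and is convex on $[0,+\infty)$ by hypothesis; the only remaining P\'olya hypothesis is $\lim_{z\to+\infty}\phi(z)=0$. Writing $\psi(z):=\log\phi(z)=-cz+2I(z)$ with $I(z):=\int_0^\infty(1-\cos zx)\,\nu(\rmd x)$, it suffices to show $I(z)=o(z)$. I would split $I$ at $x=1$: the tail $\int_1^\infty(1-\cos zx)\,\nu(\rmd x)\le 2\nu((1,\infty))$ is bounded, so $z^{-1}$ times it tends to $0$; on $(0,1]$ the elementary bound $1-\cos\theta\le\theta$ gives $z^{-1}(1-\cos zx)\le x$ with $x\in L^1((0,1],\nu)$ by \eqref{integrable}, while $z^{-1}(1-\cos zx)\to 0$ pointwise, so dominated convergence yields $z^{-1}\int_0^1(1-\cos zx)\,\nu(\rmd x)\to 0$. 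Hence $I(z)=o(z)$, $\psi(z)=-cz+o(z)\to-\infty$, and $\phi(z)\to0$. P\'olya's theorem then provides a probability measure whose characteristic function is $\phi$, and by the previous paragraph this measure is QID with triplet $\left(0,\sigma,0\right)$, so $(c,\nu)\in\Phi^{*}$.

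For the ``in particular'' clause I would verify that \eqref{formula:phibis} is exactly the statement $\phi''\ge0$ on $(0,\infty)$. Condition \eqref{integrable} gives $\int_{\R}x^2\,\nu(\rmd x)<\infty$ (near $0$ because $x^2\le|x|$ there, and in the tail by assumption) and hence also $\int_{\R}|x|\,\nu(\rmd x)<\infty$; these legitimate differentiating $I$ twice under the integral sign, with dominating functions $|x|$ for the first derivative (via $|x\sin zx|\le|x|$) and $x^2$ for the second (via $|x^2\cos zx|\le x^2$). For $z>0$ one gets $\psi'(z)=-c+2\int_0^\infty x\sin zx\,\nu(\rmd x)$ and $\psi''(z)=2\int_0^\infty x^2\cos zx\,\nu(\rmd x)$. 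Since $\phi=e^{\psi}$ gives $\phi''=\left(\psi''+(\psi')^2\right)e^{\psi}$ and $e^{\psi}>0$, the inequality $\phi''(z)\ge0$ is equivalent to $\psi''(z)+\psi'(z)^2\ge0$, which is precisely \eqref{formula:phibis}. Thus \eqref{formula:phibis} forces $\phi$ to be convex on $(0,\infty)$, hence, being continuous at $0$, on $[0,+\infty)$, and the first part applies.

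I expect the main obstacle to be the decay estimate $\phi(z)\to0$: the naive bound $1-\cos\theta\le\frac12\theta^2$ only yields $I(z)=O(z^2)$, so one genuinely needs the sharp linear bound $1-\cos\theta\le\theta$ together with the first-moment integrability of $\nu$ near the origin built into \eqref{integrable} to obtain $I(z)=o(z)$; without this the P\'olya hypothesis at infinity cannot be secured.
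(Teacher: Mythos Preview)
Your proof is correct and follows the same two-step scheme as the paper: identify $\phi$ with the Lévy--Khintchine exponent of the signed triplet $\bigl(0,\frac{c}{\pi x^2}\,dx-\nu,0\bigr)$ and then invoke P\'olya's theorem, and for the second assertion compute $\phi''=(\psi''+(\psi')^2)\phi$. The paper's proof is essentially a two-line sketch; you supply the details it omits, most notably the decay $\phi(z)\to0$ at infinity (via $I(z)=o(z)$, using the first-moment integrability of $\nu$ near the origin from \eqref{integrable}) and the domination needed to differentiate $I$ twice under the integral sign.
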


\begin{proof}
Since
\[
\int_\R\left(e^{izx}-1-izx \mathbf{1}_{[-1,1]}(x)\right) \left(\frac{c}{\pi x^2} dx - \nu(dx)\right)
=-c|z| +2 \int_0^\infty (1-\cos zx)\nu(dx),
\]
the first statement is a consequence of P\'{o}lya's theorem, see \cite{Polya} or \cite[Section~XV.3]{Fel}.
Denoting the left hand side of (\ref{formula:phibis}) by $A(z)$ we have $\phi''(z)=A(z)\phi(z)$,
which implies the second statement.
\end{proof}

\begin{prop}\label{prop:secondmoment}
If $(c,\nu)\in\Phi$ and $c\ge\sqrt{8m_2(\nu)}$,
where $m_2(\nu)$ denotes the second moment of $\nu$,
then $(c,\nu)\in\Phi^{\boxplus}$.
\end{prop}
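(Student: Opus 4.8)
The plan is to reduce everything to the free deconvolution theorem of \cite{ATV} already exploited in Section~4.1, applied to the FID distribution whose free L\'evy measure is $\nu$. First I would introduce $\sigma_\nu$, the FID distribution with free characteristic triplet $(0,\nu,0)$; it exists because $\nu$ is a L\'evy measure. Reading off the coefficient of $z^2$ in the free L\'evy--Khintchine representation \eqref{eq:R} shows that the second free cumulant of $\sigma_\nu$ is $\kappa_2(\sigma_\nu)=\int_\R x^2\,\nu(dx)=m_2(\nu)$, so $\sigma_\nu$ has variance $m_2(\nu)$, which is finite by \eqref{integrable}. Writing $\sigma:=\sqrt{m_2(\nu)}$, observe that $2\sqrt{2}\,\sigma=\sqrt{8m_2(\nu)}=:c_0$, so the hypothesis reads $c\ge c_0$.

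Next I would settle the threshold case $c=c_0$ using ATV directly. Since $\sigma_\nu$ has finite variance $\sigma^2$, \cite[Theorem~1.2]{ATV} provides a probability measure $\rho:=\mathbf{C}_{c_0}\boxminus\sigma_\nu$ with $\sigma_\nu\boxplus\rho=\mathbf{C}_{c_0}$. The sufficient condition for such a deconvolution to be FQID, stated after \eqref{FQIDex}, requires $\sigma_\nu$ to have compact support, which need not hold here; instead I would apply Proposition~\ref{basic}(iv) with the FID measure $\sigma_\nu$ and the FID (hence FQID) measure $\mathbf{C}_{c_0}=\sigma_\nu\boxplus\rho$ to conclude that $\rho$ is FQID. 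Subtracting $R$-transforms gives $R_\rho(z)=\int_\R k(x,z)\big(\tfrac{c_0}{\pi x^2}\,dx-\nu(dx)\big)$, so by the uniqueness of the free characteristic triplet (Propositions~\ref{uniquegenerating} and \ref{thm:FQID}) the triplet of $\rho$ is $\big(0,\tfrac{c_0}{\pi x^2}\,dx-\nu(dx),0\big)$; that is, $(c_0,\nu)\in\Phi^{\boxplus}$.

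Finally I would bootstrap to an arbitrary $c\ge c_0$ by adding Cauchy mass. Put $\mu:=\mathbf{C}_{c-c_0}\boxplus\rho$ (with the convention $\mathbf{C}_0:=\delta_0$). Since $\mathbf{C}_{c-c_0}$ is FID and $\rho$ is FQID, Proposition~\ref{basic}(iii) shows $\mu$ is FQID. Because the map from triplet to $R$-transform in \eqref{eq:R} is linear in $(a,\nu,\gamma)$ and $R$-transforms add under $\boxplus$, we get $R_\mu=R_{\mathbf{C}_{c-c_0}}+R_\rho$, whose associated triplet is the sum $\big(0,\tfrac{c-c_0}{\pi x^2}\,dx+(\tfrac{c_0}{\pi x^2}\,dx-\nu(dx)),0\big)=\big(0,\tfrac{c}{\pi x^2}\,dx-\nu(dx),0\big)$. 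By uniqueness this is the free characteristic triplet of the FQID distribution $\mu$, hence $(c,\nu)\in\Phi^{\boxplus}$.

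The essential input is the ATV deconvolution theorem, which supplies the threshold case; the rest is bookkeeping with triplets together with the convolution closure properties of Proposition~\ref{basic}. The one delicate point---where I expect the main obstacle---is upgrading $\rho$ from a mere probability measure to an FQID distribution: since $\sigma_\nu$ may have unbounded support, the compact-support criterion recalled in Section~4.1 does not apply, and it is precisely Proposition~\ref{basic}(iv) that bridges this gap.
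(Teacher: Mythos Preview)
Your proof is correct and follows essentially the same route as the paper's: construct the FID distribution with free L\'evy measure $\nu$, identify its variance as $m_2(\nu)$, invoke \cite[Theorem~1.2]{ATV} to produce $\rho=\mathbf{C}_{c_0}\boxminus\sigma_\nu$, and conclude. The paper is terser---it cites \cite[Theorem~1.3]{BG06} for the variance computation and leaves both the FQID property of $\rho$ and the passage from $c_0$ to general $c\ge c_0$ implicit---whereas you spell these out (using Proposition~\ref{basic}(iv) and an extra Cauchy convolution), which only makes the argument clearer.
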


\begin{proof}
Since $\nu$ is a L\'{e}vy measure, there is a freely infinitely divisible probability measure $\mu_\nu$ such that
\begin{align*}
R_{\mu_\nu}(z) :&= \int_\R\left(\frac{1}{1-zx}-1-zx\mathbf{1}_{[-1,1]}(x) \right) \nu(dx)=2\int_0^\infty \left( \frac{1}{1-z^2x^2}-1\right) \nu(dx).
\end{align*}
Consider arbitrary numbers $\alpha,\beta>0$. As $z\rightarrow 0$ with $z\in \Delta_{\alpha,\beta}$, we get
\begin{align*}
R_{\mu_\nu}(z) &= m_2(\nu)z^2+2 \int_0^\infty \left( \frac{1}{1-z^2x^2}-1-z^2x^2\right)\nu(dx)\\
&=m_2(\nu)z^2 + o(z^2).
\end{align*}
Therefore, by \cite[Theorem 1.3]{BG06}, $\mu_\nu$ has a finite variance equal to $m_2(\nu)$
(note that $R_{\mu}(z)$ in \cite{BG06} denotes our $z^{-1} R_\mu (z)$).
Hence there is a probability measure $\rho(\mu_\nu)$ on $\R$ such that $\mu_\nu \boxplus \rho(\mu_\nu)=\mathbf{C}_{\sqrt{8m_2(\nu)}}$ by \cite[Theorem 1.2]{ATV}. This implies, that $(c,\nu)\in\Phi^{\boxplus}$.
\end{proof}

Now we restrict ourselves to a special case.

\begin{prop}\label{prop:cnu}
Assume that $\nu=p\left(\delta_{-\lambda}+\delta_{\lambda}\right)$, $p,\lambda,c>0$,
and put
\[
h(p):=\left\{\begin{array}{ll}
\sqrt{2p(4p+1)}&\hbox{if $0<p\le\frac{1}{4}$,}\\
2p+\sqrt{p}&\hbox{if $p>\frac{1}{4}$.}
\end{array}
\right.
\]
If $c\ge\lambda\cdot h(p)$ then $(c,\nu)\in\Phi^{*}$.
\end{prop}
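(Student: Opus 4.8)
The plan is to invoke the second, computational criterion of Proposition \ref{prop:convex}: it is enough to verify inequality \eqref{formula:phibis} for all $z>0$, since then $\phi''\ge0$, so $\phi$ is convex and $(c,\nu)\in\Phi^{*}$. First I would rescale. Because the L\'{e}vy measure sits at $\pm\lambda$, its restriction to $(0,\infty)$ is $p\delta_{\lambda}$, so the two integrals in \eqref{formula:phibis} collapse to $2p\lambda^2\cos(\lambda z)$ and $2p\lambda\sin(\lambda z)$. Substituting $t=\lambda z$ and writing $\tilde c:=c/\lambda$, $q:=2p$, and dividing out $\lambda^2$, the whole problem reduces to the single scale-free inequality
\[
B(t):=q\cos t+(q\sin t-\tilde c)^2\ge0\qquad\text{for all }t\in\R,
\]
while the hypothesis $c\ge\lambda\, h(p)$ becomes $\tilde c\ge h(p)$ (note $h$ does not involve $\lambda$).

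Next I would localize the difficulty. When $\cos t\ge0$ we have $B(t)\ge0$ trivially, so only $t$ with $\cos t<0$ matter. Writing $\gamma:=-\cos t\in(0,1]$ and $\sin t=\pm\sqrt{1-\gamma^2}$, the binding case is $\sin t=+\sqrt{1-\gamma^2}$, since $\tilde c>q\ge q\sin t$ makes $(\tilde c-q\sin t)^2$ decreasing in $\sin t$; equivalently one reads this off the discriminant $-4q\cos t$ of $B$ regarded as a quadratic in $\tilde c$. Thus it suffices to establish
\[
\tilde c\ge\chi(\gamma):=q\sqrt{1-\gamma^2}+\sqrt{q\gamma}\qquad\text{for all }\gamma\in(0,1],
\]
i.e. $\tilde c\ge M:=\max_{\gamma\in[0,1]}\chi(\gamma)$. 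It therefore remains to bound $M$ above by $h(p)$ in the two regimes.

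The core of the argument consists of these two bounds, obtained by elementary but slightly different estimates. For every $p>0$ I would square, getting $\chi(\gamma)^2=q^2(1-\gamma^2)+q\gamma+2\sqrt{q^3\gamma(1-\gamma)(1+\gamma)}$; using $\gamma(1+\gamma)\le1+\gamma^2$ to replace the cross term's radicand and then AM--GM, $2\sqrt{q^3(1+\gamma^2)(1-\gamma)}\le q^2(1+\gamma^2)+q(1-\gamma)$, one arrives at $\chi(\gamma)^2\le2q^2+q=2p(4p+1)$, hence $M\le\sqrt{2p(4p+1)}$. This already settles the case $0<p\le\frac14$. To obtain the sharper value $2p+\sqrt p=q+\sqrt{q/2}$ for $p>\frac14$ I would split on $\gamma$: for $\gamma\le\frac12$ the crude bounds $\sqrt{1-\gamma^2}\le1$ and $\sqrt{q\gamma}\le\sqrt{q/2}$ give $\chi(\gamma)\le q+\sqrt{q/2}$ at once, while for $\gamma>\frac12$ the claim reduces (using $q\ge\frac12$, so the right-hand side is increasing in $q$) to the scalar inequality $\sqrt{1-\gamma^2}\le2-\sqrt{2\gamma}$, which after squaring reads $(\gamma+1)^2+2\ge4\sqrt{2\gamma}$ and follows from two AM--GM steps, namely $(\gamma+1)^2+2\ge2\sqrt2\,(\gamma+1)\ge4\sqrt{2\gamma}$.

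Combining the two estimates gives $M\le h(p)$ in both regimes, so $\tilde c\ge h(p)$ forces $B(t)\ge0$ for all $t$, and Proposition \ref{prop:convex} yields $(c,\nu)\in\Phi^{*}$. The step I expect to be the main obstacle is this two-regime optimization of $\chi$: the uniform AM--GM bound $\sqrt{2p(4p+1)}$ is clean and in fact valid for all $p$, but proving the strictly smaller threshold $2p+\sqrt p$ for $p>\frac14$ requires the case split together with the auxiliary inequality $\sqrt{1-\gamma^2}\le2-\sqrt{2\gamma}$. One must also check throughout that $h(p)>q$, which holds since $2p(4p+1)>(2p)^2$ and $2p+\sqrt p>2p$, so that $\tilde c-q\sin t>0$ and the square roots are taken on the correct branch.
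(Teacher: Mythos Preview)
Your proof is correct. You start from the same reduction as the paper---apply the criterion \eqref{formula:phibis} of Proposition~\ref{prop:convex} to the atomic measure $\nu=p(\delta_{-\lambda}+\delta_{\lambda})$, obtaining $A(z)=2p\lambda^2\cos\lambda z+(-c+2p\lambda\sin\lambda z)^2$---but then diverge in method. The paper's argument is to use the elementary bound $\cos\alpha\ge\frac{1}{2}\sin^2\alpha-1$ (equivalently $\frac{1}{2}(1+\cos\alpha)^2\ge0$) to replace $A(z)$ from below by a quadratic polynomial in $y=\sin\lambda z$, namely $(4p+1)p\lambda^2 y^2-4p\lambda c\,y+(c^2-2p\lambda^2)$, and then invoke the following fact about quadratics on $[-1,1]$: if $a>0$ and either $b^2\le 4ad$ or $2a\le|b|\le a+d$, then $ay^2+by+d\ge0$ for all $|y|\le1$. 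The two alternatives correspond exactly to $c\ge\lambda\sqrt{2p(4p+1)}$ and $c\ge\lambda(2p+\sqrt{p})$ with $p\ge\frac14$.

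Your route---solving for the exact threshold $\chi(\gamma)=q\sqrt{1-\gamma^2}+\sqrt{q\gamma}$ and then bounding $\max_\gamma\chi$ by the two pieces of $h(p)$ via AM--GM estimates---is more hands-on but equally valid; in particular your chain $\chi^2\le 2q^2+q=2p(4p+1)$ is a neat uniform estimate. One small expositional point: the phrase ``the right-hand side is increasing in $q$'' in the $\gamma>\frac12$ case is a little loose, since both sides of $\chi(\gamma)\le q+\sqrt{q/2}$ grow with $q$; what you really use is that after dividing by $\sqrt q$ the inequality becomes $\sqrt q\,(1-\sqrt{1-\gamma^2})\ge\sqrt\gamma-\sqrt{1/2}$, whose left side is increasing in $q$ while the right side is constant, so it suffices to check $q=\frac12$. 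With that clarification the argument is complete. The paper's approach is shorter and avoids the case split on $\gamma$, while yours makes the optimal threshold $\chi$ explicit.
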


\begin{proof}
Denoting again the left hand side of (\ref{formula:phibis}) by $A(z)$, and using inequality
$\cos\alpha\ge\frac{1}{2}\sin^2\alpha-1$, we get in our case
\begin{align*}
A(z)&=2p\lambda^2\cos\lambda z+\left(-c+2p\lambda\sin\lambda z\right)^2\\
&\ge(4p+1)p\lambda^2 \sin^2\lambda z-4p\lambda c\sin\lambda z-2p\lambda^2+c^2.
\end{align*}
To conclude it suffices to apply the following elementary observation:
if $a>0$, $|y|\le1$ and either $b^2\le 4ad$ (applied for $0<p\le1/4$) or $2a\le|b|\le a+d$
(applied for $p>1/4$), then $ay^2+by+d\ge0$.
\end{proof}

Putting $c:=\sqrt{8m_2(\nu)}=4\lambda\sqrt{p}$ and applying Proposition~\ref{prop:cnu} we obtain

\begin{cor}\label{cor:bpexample}
Assume that $0<4p\le9$, $\lambda>0$, $\nu:=p\left(\delta_{-\lambda}+\delta_{\lambda}\right)$
and $c:=\sqrt{8m_2(\nu)}=4\lambda\sqrt{p}$. Then $(c,\nu)\in\Phi^{*}\cap\Phi^{\boxplus}\setminus\Phi^{+}$.
Consequently,
\[
\left(0,\frac{c}{\pi x^2}\,dx-\nu(dx),0\right)
\]
is both a classical and a free characteristic triplet.
\end{cor}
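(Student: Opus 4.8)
The plan is to assemble the three membership claims $(c,\nu)\in\Phi^{\boxplus}$, $(c,\nu)\in\Phi^{*}$, and $(c,\nu)\notin\Phi^{+}$ separately, using the two criteria already proved (Propositions \ref{prop:secondmoment} and \ref{prop:cnu}) for the first two, and a one-line atom argument for the third. None of the three requires new machinery; the only point demanding care is the elementary case analysis that matches the sharp threshold to the hypothesis $4p\le9$.

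First I would record the second moment. Since $\nu=p(\delta_{-\lambda}+\delta_{\lambda})$ is supported on $\{\pm\lambda\}$, we have $m_2(\nu)=2p\lambda^2$, so $\sqrt{8m_2(\nu)}=4\lambda\sqrt{p}=c$. Thus the choice of $c$ realizes equality $c=\sqrt{8m_2(\nu)}$, and Proposition \ref{prop:secondmoment} applies directly to give $(c,\nu)\in\Phi^{\boxplus}$.

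Next I would check the hypothesis $c\ge\lambda\, h(p)$ of Proposition \ref{prop:cnu}, that is, $4\sqrt{p}\ge h(p)$, splitting into the two regimes defining $h$. For $0<p\le\frac14$ the inequality $4\sqrt{p}\ge\sqrt{2p(4p+1)}$ squares to $16p\ge8p^2+2p$, i.e.\ $p\le\frac74$, which is automatic. For $p>\frac14$ the inequality $4\sqrt{p}\ge2p+\sqrt{p}$ reduces to $3\ge2\sqrt{p}$, i.e.\ $p\le\frac94$. Hence $4\sqrt{p}\ge h(p)$ holds exactly when $p\le\frac94$, which is precisely $4p\le9$; this is where the constant $9$ in the hypothesis comes from, and it is the only genuinely computational step. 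Proposition \ref{prop:cnu} then yields $(c,\nu)\in\Phi^{*}$.

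Finally, $(c,\nu)\notin\Phi^{+}$ is immediate: the measure $\frac{c}{\pi x^2}\,dx$ is absolutely continuous and assigns no mass to single points, whereas $\nu$ puts mass $p>0$ at each of $\pm\lambda$. Therefore $\frac{c}{\pi x^2}\,dx-\nu(dx)$ carries an atom of mass $-p<0$ at $\pm\lambda$ and is not nonnegative, so its negative part is nonzero. Combining the three parts gives $(c,\nu)\in\Phi^{*}\cap\Phi^{\boxplus}\setminus\Phi^{+}$, and the ``Consequently'' sentence is simply the unwinding of the definitions of $\Phi^{*}$ and $\Phi^{\boxplus}$. I do not expect any real obstacle; the delicate verification is confined to the case split pinning $4\sqrt{p}\ge h(p)$ to the sharp bound $p\le\frac94$.
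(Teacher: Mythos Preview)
Your proposal is correct and follows exactly the approach the paper intends: invoke Proposition~\ref{prop:secondmoment} for $\Phi^{\boxplus}$ (using $c=\sqrt{8m_2(\nu)}$), verify the hypothesis of Proposition~\ref{prop:cnu} for $\Phi^{*}$ via the case split on $h(p)$, and note the atom at $\pm\lambda$ for $\Phi^{+}$. The paper's own proof is the single sentence preceding the corollary, so you have simply spelled out the arithmetic (in particular the case analysis pinning down $p\le 9/4$) that the paper leaves to the reader.
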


This corollary allows us to extend the Bercovici-Pata bijection. Namely, define two families of distributions:
\begin{align*}
\mathcal{Q}(*)&:=\left\{\mu*\mu^{*}(c,\nu):\mu\in\mathrm{ID}(*),\,(c,\nu)\in\Phi^{*}\cap\Phi^{\boxplus}\right\},\\
\mathcal{Q}(\boxplus)&:=\left\{\mu\boxplus\mu^{\boxplus}(c,\nu):\mu\in\mathrm{ID}(\boxplus),\,(c,\nu)\in\Phi^{*}\cap\Phi^{\boxplus}\right\}.
\end{align*}
We have $\mathrm{ID}(*)\varsubsetneq\mathcal{Q}(*)$ and $\mathrm{ID}(\boxplus)\varsubsetneq\mathcal{Q}(\boxplus)$.
Note that the set $\Phi^{*}\cap\Phi^{\boxplus}$ is closed under addition, which implies that
$\mathcal{Q}(*)$ and $\mathcal{Q}(\boxplus)$ are closed under the classical and the free convolution respectively.
Now we can define an extension of the Bercovici-Pata bijection as $\widetilde{\Lambda}:\mathcal{Q}(*)\to\mathcal{Q}(\boxplus)$ by
\[
\widetilde{\Lambda}\left(\mu*\mu^{*}(c,\nu)\right):=\Lambda(\mu)\boxplus\mu^{\boxplus}(c,\nu),
\]
for $\mu\in\mathrm{ID}(*)$, $(c,\nu)\in\Phi^{*}\cap\Phi^{\boxplus}$.
This map is well defined, as the characteristic function and the $R$-transform characterize the corresponding
distribution uniquely.

\begin{rem}
Let $\mathcal{T}(*)$ and $\mathcal{T}(\boxplus)$ denote the set of classical and free characteristic triplets,
respectively, and let $\mathcal{T}$ denote the set of characteristic triplets corresponding to the (classically or freely) infinitely divisible
distributions. From Corollary~\ref{cor:bpexample} we see that $\mathcal{T}\varsubsetneq\mathcal{T}(*)\cap\mathcal{T}(\boxplus)$.
We have also seen that the set $\mathcal{T}(\boxplus)\setminus\mathcal{T}(*)$ is nonempty (Remark~\ref{rem:Gaussianpart}, subsections~\ref{subse:smp} and \ref{subse:twomp})
and the set  $\mathcal{T}(*)\setminus\mathcal{T}(\boxplus)$ is nonempty (Proposition~\ref{prop:classtau}).
\end{rem}



\begin{thebibliography}{9999}
\bibitem{ATV}
O. Arizmendi, P. Tarrago, C. Vargas, Subordination methods for free deconvolution, 
{\it Ann. Inst. Henri Poincaré Probab. Stat}., {\bf 56}, (2020), no. 4, 2565--2594.

\bibitem{BNT02}
O.E. Barndorff-Nielsen and S. Thorbj{\o}rnsen, 
Self-decomposability and L\'{e}vy processes in free probability, {\it Bernoulli} {\bf 8}(3) (2002), 323-366.

\bibitem{BNT06}
O. E. Barndorff-Nielsen,  S. Thorbj{\o}rnsen.
Classical and free infinite divisibility and L{\'e}vy processes.
{\it Quantum independent increment processes. II}, 
33--159, Lecture Notes in Math., {\bf 1866}, Springer, Berlin, (2006).

\bibitem{BB04}
S. T. Belinschi, H. Bercovici.
Atoms and regularity for measures in a partially defined free convolution semigroup.
{\it Math. Z.} {\bf 248}, (2004), 665-674.

\bibitem{BBLS11}
S. T. Belinschi, M. Bo\.{z}ejko, F. Lehner, R. Speicher.
The normal distribution is $\boxplus$-infinitely divisible. 
{\it Adv. Math.} {\bf 226} (2011), no. 4, 3677-3698.

\bibitem{BG06}
F. Benaych-Georges.
Taylor expansions of R-transforms: application to supports and moments. {\it Indiana Univ. Math. J.} {\bf 55} (2006), no. 2, 465-481.

\bibitem{BP96}
H. Bercovici, V. Pata.
The Law of Large Numbers for Free Identically Distributed Random Variables. {\it Ann. Probability} {\bf 24} (1996), 453-465.

\bibitem{BP}
H. Bercovici, V. Pata.
Stable laws and domains of attraction in free probability theory. With an appendix by Philippe Biane. {\it Ann. of Math.} (2) {\bf 149}, (1999), no. 3, 1023-1060.

\bibitem{BeVo1993}
H. Bercovici, D. V. Voiculescu.
Free convolution of measures with unbounded support. 
{\it Indiana Univ. Math. J.} {\bf 42} (1993), no. 3, 733--773.

\bibitem{BV} 
H. Bercovici, D. V. Voiculescu.
Superconvergence to the central limit and failure of the Cram\'{e}r theorem for free random variables, {\it Probab. Theory Relat. Fields} {\bf 102}, 215--222 (1995).

\bibitem{Bia98}
P. Biane, 
Processes with free increments, {\it Math. Z.} {\bf 227} (1998), no.1, 143-174.


\bibitem{Bo}
V. I. Bogachev. 
{\it Measure Theory}. Vol. 2. Springer, Berlin. (2007)

\bibitem{BB}
M. Bo\.{z}ejko  W. Bryc.
On a class of free L\'{e}vy laws related to a regression problem.
{\it J. Funct. Anal.} Vol. {\bf 236}, (2006) 59-77.

\bibitem{BH13}
M. Bo\.{z}ejko, T. Hasebe.
On free infinite divisibility for classical Meixner distributions. 
{\it Probab. Math. Statist.} {\bf 33} (2013), no. 2, 363-375.


\bibitem{Fel}
W. Feller, {\it Introduction to Probability Theory and Its Applications}, Vol. II, John Wiley $\&$ Sons, New York, 1966.

\bibitem{Has14}
T. Hasebe.
Free infinite divisibility for beta distributions and related ones. 
{\it Electron. J. Probab.} {\bf 19} (2014), no. 81, 33 pp. 


\bibitem{HSW}
T. Hasebe, T. Simon and M. Wang, 
Some properties of the free stable distributions, {\it Ann. Inst. Henri Poincar\'{e}, Probab. Stat.} (2020) Vol {\bf 56}, No. 1, 296 - 325.


\bibitem{LPS}
A. Lindner, L. Pan and K. Sato.
On quasi-infinitely divisible distributions, 
{\it Trans. Amer. Math. Soc.} Vol. {\bf 370} (2018) No.12, 8483-8520.

\bibitem{LO77}
 Y. Linnik, \u{I}. V. Ostrovski\u{\i}.
 {\it Decomposition of random variables and vectors}. 
 Translated from the Russian. Translations of Mathematical Monographs, Vol. 48. American Mathematical Society, Providence, R. I., 1977. ix+380 pp.

\bibitem{LM}
E. Liszewska, W. M\l otkowski.
Some relatives of the Catalan sequence, 
{\it Advances in Applied Mathematics}, 
{\bf 121}, Article 102105, (2020).

\bibitem{Maa}
H.Maassen,
Addition of freely independent random variables, {\it J. Funct. Anal.}, {\bf 106} (1992), 409-438.

\bibitem{MS17}
J. A. Mingo and R. Speicher.
{\it Free Probability and Random Matrices}, Fields Institute Monographs 35, Springer, New York, 2017.

\bibitem{MSU20}
W. M\l otkowski, N. Sakuma, Y. Ueda.
Free self-decomposability and unimodality of the Fuss-Catalan distributions. 
{\it J. Stat. Phys.} {\bf 178} (2020), no. 5, 1055-1075.

\bibitem{Mlo2021}
W. M\l otkowski.
Probability distributions with rational free $R$-transform.
ArXiV, 2021.

\bibitem{NS96}
A. Nica and R. Speicher, 
On the multiplication of free N-tuples of noncommutative random variables. {\it Amer. J. Math.} {\bf 118}, no. 4, 799-837 (1996).

\bibitem{NiSpBook}
A. Nica and R. Speicher.
{\it Lectures on the combinatorics of free probability}. 
London Mathematical Society Lecture Note Series, {\bf 335}. 
Cambridge University Press, Cambridge, 2006. xvi+417

\bibitem{P03}
A. Poltoratski, 
Images of non-tangential sectors under Cauchy transforms. 
{\it J. Anal. Math.} {\bf 89}, 385-395 (2003).

\bibitem{Polya}
G. P{\'o}lya, 
Remarks on characteristic functions,
{\it Proc. Berkeley Symp. on Math. Statist and Probab.}, 115–123 (1949).

\bibitem{SY} 
N. Saitoh and H. Yoshida.
The infinite divisibility and orthogonal polynomials with a constant recursion formula in free probability theory.
{\it Probab. Math. Statist.} , {\bf 21}(1):159--170, 2001.

\bibitem{Sato}
K. Sato.
{\it L\'{e}vy Processes and Infinitely Divisible Distributions}, Revised Edition. Cambridge University Press, Cambridge (2013).

\bibitem{Rudin}
W. Rudin. 
{\it Real and complex analysis}. 
Third edition. McGraw-Hill Book Co., New York, 1987. xiv+416 pp.

\bibitem{V86}
D. V. Voiculescu, 
Addition of certain noncommuting random variables, {\it J. Funct. Anal.} {\bf 66}
(1986), 323-346.

\end{thebibliography}
\end{document}